\theoremstyle{plain}
\newtheorem{thm}{Theorem}
\newtheorem*{thm*}{Theorem}
\newtheorem{propo}[thm]{Proposition}
\theoremstyle{definition}
\newtheorem{lem}[thm]{Lemma}
\newtheorem{coro}[thm]{Corollary}
\newtheorem{rmk}[thm]{Remark}
\theoremstyle{remark}
\newcommand{\de}{\partial}
\newcommand{\PU}{\gr{P}(U)}
\newcommand{\PV}{\gr{P}(V)}
\newcommand{\PC}{\mathbb{P}(\mathscr{C})}
\newcommand{\PE}{\mathbb{P}(\mathscr{E})}
\newcommand{\pp}{\mathbb{P}}
\newcommand{\Pd}[1]{\mathbb{P}({#1})}
\newcommand{\mappa}[2]{{#1} \rightarrow {#2}}
\newcommand{\spazio}{\rule{1 pt}{0 cm}}
\newcommand{\gr}[1]{\mathbf{#1}}
\newcommand{\odi}[1]{\mathcal{O}_{#1}}
\newcommand{\OPU}{\mathcal{O}_{\PU}}
\newcommand{\EXT}{\mathscr{E}\!xt}
\newcommand{\HOM}{\mathscr{H}\!om}
\newcommand{\END}{\mathscr{E}\!nd}
\newcommand{\sm}{\mbox{sm}}
\newcommand{\GRA}{\gra(m,\Lambda^{2}V)}
\newcommand{\ellecan}{-2}
\newcommand{\CC}{\mathscr{C}}
\newcommand{\rosso}[1]{#1}
\renewcommand{\Bbb}{\mathbb}
\renewcommand{\bold}{\mathbf}
\DeclareMathOperator{\Coh}{Coh}
\DeclareMathOperator{\Spec}{Spec}
\DeclareMathOperator{\Hilb}{Hilb}
\DeclareMathOperator{\Cat}{Cat}
\DeclareMathOperator{\Pf}{Pf}
\DeclareMathOperator{\PGL}{PGL}
\DeclareMathOperator{\GL}{GL}
\DeclareMathOperator{\pari}{par}
\DeclareMathOperator{\gra}{\textbf{Gr}}
\DeclareMathOperator{\depth}{depth}
\DeclareMathOperator{\pd}{pd}
\DeclareMathOperator{\id}{I}
\DeclareMathOperator{\im}{Im}
\DeclareMathOperator{\Proj}{Proj}
\DeclareMathOperator{\Ext}{Ext}
\DeclareMathOperator{\Hom}{Hom}
\DeclareMathOperator{\coker}{coker}
\DeclareMathOperator{\Vi}{V}
\DeclareMathOperator{\Hh}{H}
\DeclareMathOperator{\hh}{h}
\DeclareMathOperator{\End}{End}
\DeclareMathOperator{\R}{R}
\DeclareMathOperator{\Sing}{Sing}
\DeclareMathOperator{\codim}{codim}
\DeclareMathOperator{\Sym}{Sym}
\DeclareMathOperator{\Cl}{Cl}
\DeclareMathOperator{\Pic}{Pic}
\newskip\skipppamount
\def\smskip{\vskip\skipppamount \noindent}
\newskip\skippamount
\def\meskip{\vskip\skippamount}
\begin{document}

\title[On the Hilbert scheme of degeneracy loci of $\mappa{\mathcal{O}_{\PV}^{m}}{\Omega^{}_{\PV}(2)}$]{On the Hilbert scheme of degeneracy loci of twisted differential forms}
\author{Fabio Tanturri}
\address{Mathematik und Informatik\\
Universit\"at des Saarlandes\\
Campus E2 4\\
D-66123\\
Saarbr\"ucken\\
Germany}
\email{tanturri@math.uni-sb.de}
\thanks{Supported by the International School for Advanced Studies (SISSA, Trieste). Partially supported by the Research Network Program ``GDRE-GRIFGA'', the ANR project \mbox{GeoLMI}, and by the PRIN 2010/2011 ``Geometria delle variet\`a algebriche''}

\subjclass[2010]{14C05, 14M12; 14E05, 14J40, 14N15}

\keywords{degeneracy loci, Hilbert scheme, determinantal varieties, skew-symmetric matrices, Palatini scroll}

\date{\today}

\begin{abstract}
We prove that, for $3 < m < n-1$, the Grassmannian of $m$-dimensional subspaces of the space of skew-symmetric forms over a vector space of dimension $n$ is birational to the Hilbert scheme of the degeneracy loci of $m$ global sections of $\Omega_{\mathbb{P}^{n-1}}(2)$, the twisted cotangent bundle on $\mathbb{P}^{n-1}$. For $3=m<n-1$ and $n$ odd, this Grassmannian is proved to be birational to the set of Veronese surfaces parametrized by the Pfaffians of linear skew-symmetric matrices of order $n$.
\end{abstract}

\maketitle

%%%%%%%%%%%%%%%%%%%%%%%%%%%%%%%%%%%%%%%%%%%%%%%%%%%%%%%%%%%%%%%%%%%%%%%%%%%%%%%%%%%%%%%%%%%%%%%%%%%
%%%%%%%%%%%%%%%%%%%%%%%%%%%%%%%%%%%%%%%%%%%%%%%%%%%%%%%%%%%%%%%%%%%%%%%%%%%%%%%%%%%%%%%%%%%%%%%%%%%
\section{Introduction}
%Let $\mathbf{k}$ be an algebraically closed field of characteristic zero, $V$ an $n$-dimensional $\mathbf{k}$-vector
%space, $\PV\cong \mathbb{P}^{n-1}$ its projectivization and $m$ an integer.
\rosso{Degeneracy loci of morphisms of the form $\phi:\mappa{\mathcal{O}_{\mathbb{P}^{n-1}}^{m}}
{\Omega_{\mathbb{P}^{n-1}}^{}(2)}$ arise naturally in algebraic geometry 
and have been extensively studied, both classically and from a modern point of view.

Many 
interesting classical varieties can be obtained as such degeneracy loci: 
in 1891}, Castelnuovo \cite{Castelnuovo} considered the case $n=5$ and showed 
that the degeneracy locus of a general morphism 
$\phi:\mappa{\mathcal{O}_{\mathbb{P}^{4}}^{3}}{\Omega^{}_{\mathbb{P}^{4}}(2)}$ 
is the well-known projected Veronese surface in $\mathbb{P}^{4}$.
Few years later, Palatini \cite{PalatiniSistemi,PalatiniComplessi} focused on $\mathbb{P}^{5}$. The case $m=3$ leads to the
elliptic scroll surface of degree six, which was further studied by Fano \cite{Fano}. The case $m=4$
yields a threefold of degree seven which is a scroll over a cubic surface of $\mathbb{P}^{3}$, also
known as Palatini scroll; an interesting conjecture by Peskine states that it is the only smooth threefold in
$\pp^{5}$ not to be quadratically normal.

The case $(m,n)=(4,5)$ gives rise to the famous Segre cubic primal, a threefold in $\pp^{4}$ \rosso{which is proven to be the unique one having exactly ten distinct singular points and fifteen planes. The Segre cubic primal has been thoroughly studied due both to its rich geometry and its connections with interesting moduli spaces.

A more detailed historical account about these degeneracy loci in general and other classical examples can be found, for instance, in \cite{BazanMezzetti,FaenziFania}.}

Let us denote by $X_{\phi}$ the degeneracy locus arising from a morphism $\phi$. As the Hilbert polynomial of $X_{\phi}$ is generically fixed,
we can define $\mathcal{H}$ as the union of the irreducible 
components, in the Hilbert scheme of subschemes of $\mathbb{P}^{n-1}$, containing the 
degeneracy loci arising from general $\phi$'s.

Let $\mathbb{P}^{n-1}\cong \PV$ be the projectivization of an $n$-dimensional vector space $V$. Relying on a nice interpretation due to Ottaviani (\cite[\textsection3.2]{Ottaviani}, cfr.~Sect.~\ref{2.PCPE}), we can
identify a morphism of the form above with a skew-symmetric matrix of linear
forms in $m$ variables, or with an $m$-uple of elements in $\Lambda^{2}V$; moreover, the natural 
$\GL_{m}$-action does not modify its degeneracy locus, \rosso{so we 
get the natural rational map}
\begin{equation}
\label{rhointro}
\rho:
\xymatrix{
\GRA \ar@{-->}[r] & \mathcal{H}
}
\end{equation}
sending $\phi$ to $X_{\phi}$.

In the case ${(m,n)=(3,5)}$, from the results contained in \cite{Castelnuovo}
one can prove that the component of $\mathcal{H}$ containing Veronese
surfaces in $\mathbb{P}^{4}$ is birational to $\gra(3,\Lambda^{2}V)$. A
similar statement holds for the Palatini scrolls in $\mathbb{P}^{5}$: the main result of
\cite{FaniaMezzetti} states that $\rho$ is birational when $(m,n)=(4,6)$.
In the case $(m,n)=(3,6)$, however, it was proved in \cite{BazanMezzetti},
and in fact classically known to Fano \cite{Fano}, that $\rho$ is dominant and
generically $4:1$. \rosso{Other cases have been recently studied in \cite{FaenziFania}.}
%
%The most recent result has been achieved by Faenzi and Fania 
%\cite{FaenziFania}, who focused on the case in which $n$ is even
%and the degeneracy locus is smooth, proving the birationality of $\rho$
%also in these cases.
%
%Our contribution aims for completing the general picture. 
%Our main result is the following.

\rosso{Our main result is a complete description of the features of the map $\rho$.}

\begin{thm*}
Let $m, n \in \mathbb{N}$ satisfying $2 < m < n-1$ and let 
\[\rho:
\xymatrix{
\GRA \ar@{-->}[r] & \mathcal{H} 
}\]
be the rational morphism introduced in (\ref{rhointro}), sending the class of a morphism $\phi:\mappa{\mathcal{O}_{\PV}^{m}}
{\Omega^{}_{\PV}(2)}$ to its degeneracy locus $X_{\phi}$, considered as a point in the Hilbert scheme.
\begin{enumerate}[label=\textup{\roman{*}.}, ref=(\roman{*})]
\item If $m \geq 4$ or $(m,n)=(3,5)$, then $\rho$ is birational; in particular, the Hilbert scheme $\mathcal{H}$ 
is irreducible and generically smooth of dimension $m\left(\binom{n}{2}-m\right)$.
\item If $m=3$ and $n \neq 6$, then $\rho$ is generically injective. Moreover
\begin{itemize}
\item[\textup{ii.a.}] if $n$ is odd, $\rho$ is dominant 
on a closed subscheme $\mathcal{H}'$ of $\mathcal{H}$ of codimension ${\frac{n}{8}(n-3)(n-5)}$. The general element of $\mathcal{H}$ is a general projection in $\PV$ of a Veronese surface $v_{\frac{n-1}{2}}(\pp^{2})$, embedded via the complete linear system of curves of degree $\frac{n-1}{2}$; in particular, $\mathcal{H}$ is irreducible. The general element of $\mathcal{H}'$ is a particular projection in $\PV$, obtained using the linear space spanned by the partial derivatives of order $\frac{n-5}{2}$ of a non-degenerate polynomial $G \in \mathbf{k}[y_{0},y_{1},y_{2}]$ of degree $n-3$ as the center of projection;
\item[\textup{ii.b.}] if $n$ is even, $\rho$ is dominant 
on a closed subscheme $\mathcal{H}'$ of $\mathcal{H}$ of codimension ${\frac{3}{8}(n-4)(n-6)}$. The general element of $\mathcal{H}'$ is a projective bundle $\Pd{\mathscr{G}}$ obtained projectivizing a general stable rank-two vector bundle $\mathscr{G}$ on a general plane curve $C$ of degree $\frac{n}{2}$, with determinant $\det(\mathscr{G})=\mathcal{O}_{C}(\frac{n-2}{2})$.
\end{itemize}
\end{enumerate}
\end{thm*}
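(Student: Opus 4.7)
The strategy has two parts: prove generic injectivity of $\rho$ in all the cases claimed, and then identify the image of $\rho$ in $\mathcal{H}$ together with its dimension. For generic injectivity I would use Ottaviani's identification to regard $\phi$ as a skew-symmetric $n \times n$ matrix $M_\phi$ of linear forms on $\pp^{m-1}$, equivalently as an $m$-tuple in $\Lambda^{2}V$. The degeneracy locus $X_\phi \subset \PV$ is cut out by the sheafified Pfaffians of $M_\phi$ and admits a canonical locally free resolution of Eagon--Northcott / Buchsbaum--Eisenbud type whose differentials involve $M_\phi$ itself. For general $\phi$ this should be the minimal graded free resolution of $\mathcal{O}_{X_\phi}$, so $M_\phi$ (hence the $\GL_{m}$-orbit of $\phi$) can be recovered from the syzygies of $X_\phi$, furnishing the inverse to $\rho$ on a dense open subset of its image.

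For case \textbf{(i)} I would then compute the tangent space to $\mathcal{H}$ at a general $[X_\phi]$, i.e., $h^{0}(N_{X_\phi/\PV})$. Using the resolution above together with standard vanishing on $\PV$ should yield $h^{0}(N_{X_\phi/\PV}) = m(\binom{n}{2}-m)$ and $h^{1}(N_{X_\phi/\PV}) = 0$, giving irreducibility and generic smoothness of $\mathcal{H}$ of the stated dimension, whence birationality of $\rho$ (already known for $(m,n)=(3,5)$ classically, which serves as a sanity check).

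For case \textbf{(ii)}, $m=3$, the source has strictly smaller dimension than $\mathcal{H}$ for $n \geq 7$, so $\rho$ cannot be dominant and one must identify its image $\mathcal{H}'$. For $n$ odd, the top Pfaffian of $M_\phi$ is a ternary form $G$ of degree $n-3$; invoking apolarity / Macaulay's inverse systems for Gorenstein Artinian quotients of $\mathbf{k}[y_{0},y_{1},y_{2}]$, $X_\phi$ should be recognized as the projection of the Veronese $v_{(n-1)/2}(\pp^{2})$ from the linear space spanned by the partial derivatives of order $(n-5)/2$ of $G$. Comparing the dimension of general centers of projection (a Grassmannian of linear subspaces in the ambient $\pp^{N}$ containing the Veronese) with that of the subfamily of apolar centers produces the codimension $\frac{n}{8}(n-3)(n-5)$. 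For $n$ even, the sub-maximal Pfaffians of $M_\phi$ cut out a plane curve $C$ of degree $n/2$, and the Eagon--Northcott complex degenerates so as to produce a stable rank-two vector bundle $\mathscr{G}$ on $C$ with $\det(\mathscr{G}) = \mathcal{O}_C((n-2)/2)$ and $X_\phi \cong \Pd{\mathscr{G}}$; an analogous comparison between the moduli of such $\mathscr{G}$ and that of general $\pp^{1}$-bundles of the same numerical type embedded in $\PV$ gives the codimension $\frac{3}{8}(n-4)(n-6)$.

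The main obstacle will be case \textbf{(ii)}: proving that the apolar / rank-two description is not merely sufficient but characterizes the image of $\rho$, and carrying out the codimension counts rigorously by parametrizing the auxiliary data ($G$ or $\mathscr{G}$) modulo the symmetries preserving the embedded $X_\phi$. A related subtlety is that for $m=3$ the reconstruction of $\phi$ from $X_\phi$ in the generic injectivity step no longer flows directly from the Pfaffian resolution (since $X_\phi$ is a surface whose scheme structure is richer) and must be argued instead from the apolar / bundle-theoretic data uniquely attached to $X_\phi$.
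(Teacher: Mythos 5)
Your overall architecture (tangent-space computation for part i, apolarity/Macaulay for part ii.a, the Faenzi--Fania picture for ii.b) points in the right direction, but the crucial step, generic injectivity of $\rho$, is not established by your mechanism. You propose to recover the $\GL_m$-orbit of $\phi$ from the minimal free resolution of $\mathcal{O}_{X_\phi}$; note first that $X_\phi\subset\PV$ is cut out by the maximal minors of the $n\times m$ matrix of linear forms in the $x_i$, not by Pfaffians (the Pfaffians of the skew-symmetric matrix live on $\pp^{m-1}$). At best the resolution recovers that matrix up to $\GL_m\times\GL_n$-equivalence, whereas a point of $\GRA$ is the finer datum of a skew tensor in $U\otimes\Lambda^2V$: you would still have to show that the equivalence class admits an essentially unique skew-symmetric structure. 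That this is genuinely delicate is demonstrated by $(m,n)=(3,6)$, where $\rho$ is $4{:}1$; your argument as sketched would ``prove'' injectivity there as well, since it never isolates what makes that case exceptional. The paper instead shows that $X$ determines the cokernel sheaf $\mathscr{C}$ -- using $\omega_X\cong S^{n-m-1}\mathscr{L}\otimes\mathcal{O}_{\PV}(-2)$, reflexivity of $\mathscr{L}$, and a Weil-divisor argument on $Y$ that relies on torsion-freeness of $\Pic(Y)$, exactly the property that fails for $(3,6)$ -- and then lifts an isomorphism $\mathscr{C}_1\cong\mathscr{C}_2$ to a $\GL(U)$-equivalence of the morphisms via the vanishings $H^0(\im\varphi)=H^1(\im\varphi)=0$, $H^1(\mathscr{K}\otimes\Omega_{\PV}(2))=0$ and the simplicity of $\mathcal{T}_{\PV}(-2)$. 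Some ingredient of this kind is indispensable and is missing from your proposal.

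A second gap concerns the dimension and codimension claims, which all rest on an upper bound for $\dim\mathcal{H}$ at a possibly singular $X_\phi$. For $m\geq4$ and $n\leq 2m-3$ the locus $X$ is singular, so $\mathcal{N}$ is not a bundle and ``standard vanishing'' does not apply; the paper computes $i_*\mathcal{N}\cong\EXT^1_{\PV}(\mathscr{C},\mathscr{C})$, injects it into an auxiliary sheaf $\mathscr{F}$, and evaluates $h^0(\mathscr{F})$ by pushing a Koszul complex down from $\PU\times\PV$ (Kempf--Lascoux--Weyman). Birationality then follows from the sandwich $\dim\GRA\leq\dim\mathcal{H}\leq h^0(\mathcal{N})\leq h^0(\mathscr{F})=\dim\GRA$, not from $h^1(\mathcal{N})=0$; even so, the case $m=4$, $n$ even, $n\geq8$ requires an extra input quoted from Faenzi--Fania. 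For $m=3$, comparing apolar centers with general centers of projection only gives the codimension of $\im(\rho)$ inside the family of projected Veronese surfaces; to conclude that this equals the codimension in $\mathcal{H}$, and that $\mathcal{H}$ is irreducible with general member a general projection, one needs $h^1(\mathscr{F})=0$ (so $\mathcal{H}$ is smooth along $\im(\rho)$ and meets a unique component) together with a dimension count for the parametrizing family, neither of which appears in your plan. Finally, two factual slips in case (ii): for $n$ odd the degree-$(n-3)$ form $G$ is not a Pfaffian of the matrix but the dual socle generator attached by Macaulay's correspondence to the Gorenstein ideal generated by the $(n-1)\times(n-1)$ Pfaffians; for $n$ even the plane curve of degree $n/2$ is cut out by the full Pfaffian, not by submaximal Pfaffians.
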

Part i.~of the Theorem is the content of Theorem \ref{2.teorema} and Corollary \ref{2.corolll}; the general injectivity of $\rho$ will be proved in Theorem \ref{2.geninj}. In the case $m=3$, the codimensions of $\mathcal{H}'$ in $\mathcal{H}$ are computed in Proposition \ref{codimcasosur}; if $n$ is odd, the characterization of the general element of $\mathcal{H}'$ is performed in Theorem \ref{2.characterization}, while the general element of $\mathcal{H}$ is described in Proposition \ref{genelediH}. In the case $n$ even, this was done in \cite{FaenziFania}.

This theorem provides a complete description, showing that the case $(m,n)=(3,6)$ is the unique in which 
$\rho$ is not generically injective. It shows also that, for ${m=3}$, the 
case $n=5$ is the only one in which we have birationality. The 
missing birationality for an odd $n>6$ can be explained by means of the above 
description of ${\im(\rho) \subset \mathcal{H}}$: the general projection of a Veronese surface is not special in the sense of the Theorem, so it is not in the image of $\rho$. \rosso{For small values of $m,n$, this theorem provides another proof of the classical results already known; it also covers the main results of \cite{FaenziFania}}.

The main tool for performing the cohomology computations needed to prove the Theorem is the so-called Kempf-Lascoux-Weyman's method of calculation of syzygies via resolution of singularities; the original idea of Kempf was that the direct image via $q$ of a Koszul complex of a resolution of singularities $q:\mappa{Y}{X}$ can be used to prove results about the defining equations and syzygies of $X$. This method was successfully used by Lascoux in the case of determinantal varieties, and it is developed in full generality in Weyman's book \cite{Weyman}. \rosso{This approach is very convenient because it allows to deal with degeneracy loci with singularities, a case intractable so far. It is more general than the one adopted in \cite{FaenziFania}, which also strongly depended on the parity of $n$}.

The characterization of the general element in $\im(\rho)$, in the case $m=3$ and $n$ odd, is proved making use of Macaulay's Theorem on inverse systems \cite{Macaulay} and apolarity. \rosso{As an interesting secondary result, we develop an improved version of the Macaulay correspondence for plane curves, showing that it can be specialized }to a correspondence between non-degenerate curves and ideals generated by the Pfaffians of a linear skew-symmetric matrix (Proposition \ref{2.corrisp}).

\meskip The structure of the paper is the following: in Sect.~\ref{2.notations}, we introduce some notation, perform some preliminary constructions and prove some basic properties to be used later. We also provide a \rosso{complete} geometric interpretation of the degeneracy loci we are dealing with and we define explicitly the map $\rho$ introduced in (\ref{rhointro}). In Sect.~\ref{2.normalbu} we provide a description of the normal sheaf of a degeneracy locus in $\PV$; this allows us to produce an upper bound for the dimension of the space of its global sections in Sect.~\ref{2.upperbo}, performed by means of the Kempf-Lascoux-Weyman's method. In Sect.~\ref{2.injandbir} we prove the injectivity and birationality of $\rho$. Finally, in Sect.~\ref{2.casom3} we study the case $m=3$, giving a geometric description of the points in $\im(\rho)$ by means of Macaulay's Theorem and apolarity.

\meskip
\rosso{The case $m=2$, not treated here, can also be 
considered, but with different methods; it will 
be studied in a forthcoming paper.
}
%%%%%%%%%%%%%%%%%%%%%%%%%%%%%%%%%%%%%%%%%%%%%%%%%%%%%%%%%%%%%%%%%%%%%%%%%%%%%%%%%%%%%%%%%%%%%%%%%%%
%%%%%%%%%%%%%%%%%%%%%%%%%%%%%%%%%%%%%%%%%%%%%%%%%%%%%%%%%%%%%%%%%%%%%%%%%%%%%%%%%%%%%%%%%%%%%%%%%%%
\section{Preliminary constructions and first properties}
\label{2.notations}
\subsection{Notation, dimensions and singularities}\rule{1pt}{0pt}
\label{notadimesingu}\smskip
Let $\mathbf{k}$ be an algebraically closed field of characteristic zero and let $m, n \in \mathbb{N}$ such that ${2<m<n-1}$. We will denote by $U, V$ two $\mathbf{k}$-vector spaces of dimensions $m$, $n$ respectively; by $\PU$ and $\PV$ we will mean the projective spaces of their 1-quotients, i.e.~$\Hh^{0}(\PU,\OPU(1))\cong U$% and $\Hh^{0}(\PV,\OPV(1))\cong V$
. We set $\{y_{0},\dotsc,y_{m-1}\}$ and $\{x_{0},\dotsc,x_{n-1}\}$ to be the bases of $U$ and $V$ respectively.
\smskip In this paper we focus on the degeneracy locus $X \subset \PV$ of a general morphism of the form $\mappa{U^{*} \otimes \mathcal{O}_{\PV}} {\Omega_{\PV}(2)}$, i.e.~the scheme cut out by the maximal minors of the matrix locally representing the map. As the degeneracy locus is the same for a map and its transposed, we will rather consider the map ${\varphi:\mappa{\mathcal{T}_{\PV}(-2)}{U \otimes \mathcal{O}_{\PV}}}$, with kernel and cokernel $\mathscr{K}, \mathscr{C}$.
\begin{equation}
\label{2.mappakercoker}
\xymatrix{
0 \ar[r] &
\mathscr{K} \ar[r] &
\mathcal{T}_{\PV}(-2) \ar[r]^-{\varphi} &
U \otimes \mathcal{O}_{\PV} \ar[r] &
\mathscr{C} \ar[r] &
0
}
\end{equation}
The sheaf $\mathscr{C}$ is supported on $X$, i.e.~$\mathscr{C}_{x}=0$ if and only if $x \notin X$. If we denote by $i$ the injection $\mappa{X}{\PV}$, we can write $\mathscr{C}=i_{*}\mathscr{L}$ for some sheaf $\mathscr{L}$ on $X$.

More generally, given a morphism, one can define $D_{k}$ to be the subscheme cut out by the minors of order $k+1$ of the matrix locally representing the morphism. The following well-known result give some information on the codimension of degeneracy loci in general.
\begin{thm}
Let $E$ and $F$ be two vector bundles on a projective space, with ranks $e$, $f$ respectively. Let $E^{*}\otimes F$ be globally generated. Then, for a general morphism $\mappa{E}{F}$, the subschemes $D_{k}$ either are empty or have pure codimension $(e-k)(f-k)$. Moreover, we have that $\Sing(D_{k})=D_{k-1}$ \cite[\textsection 4.1]{Banica}.
\end{thm}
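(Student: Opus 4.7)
The plan is to establish the codimension bound via an incidence-variety argument exploiting the global generation hypothesis, and then derive the singularity statement from an explicit Jacobian computation at points of maximal rank. I would first pass to the universal parameter space $W := \Hh^{0}(\pp^{n-1}, E^{*}\otimes F)$ of all morphisms $E \to F$ and form the incidence variety
\[
Z_{k} := \{(x,\phi) \in \pp^{n-1} \times W \mid \rk(\phi_{x}) \leq k\}.
\]
For each fixed $x$, the evaluation map $\mathrm{ev}_{x}: W \to (E^{*}\otimes F)_{x} \cong \Hom(E_{x},F_{x})$ is surjective because $E^{*}\otimes F$ is globally generated. Hence the preimage of the generic determinantal locus $M_{\leq k} \subset \Hom(E_{x},F_{x})$, which classically has codimension $(e-k)(f-k)$, has the same codimension in $W$. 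Every fibre of the first projection $Z_{k} \to \pp^{n-1}$ therefore has codimension $(e-k)(f-k)$, and so $Z_{k}$ itself has codimension $(e-k)(f-k)$ in $\pp^{n-1}\times W$.

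I would next analyse the second projection $p_{2}: Z_{k} \to W$, whose fibre over $\phi$ is precisely $D_{k}(\phi)$. Setting $W_{k} := \overline{p_{2}(Z_{k})}$, upper semicontinuity of fibre dimension gives that the general fibre over $W_{k}$ has dimension $\dim Z_{k} - \dim W_{k}$. If $W_{k}=W$, then for general $\phi$ every component of $D_{k}(\phi)$ has the expected codimension $(e-k)(f-k)$; otherwise $D_{k}(\phi)$ is empty for general $\phi$. In either case, non-empty components cannot have codimension smaller than $(e-k)(f-k)$, since this bound is already forced pointwise by the classical dimension of the generic determinantal variety.

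For the singularity claim the argument is local: at a point $x_{0} \in D_{k}(\phi)$, choose local trivialisations so that $\phi$ is represented by an $f\times e$ matrix $A(x)$ and $D_{k}$ is cut out by the $(k+1)\times(k+1)$-minors of $A$. If $\rk A(x_{0}) = k$, row and column operations reduce $A(x_{0})$ to $\mathrm{diag}(1,\dots,1,0,\dots,0)$, and a direct computation shows that the differentials of the $(k+1)$-minors at $x_{0}$ span an $(e-k)(f-k)$-dimensional space, so $x_{0}$ is a smooth point of $D_{k}$. If $\rk A(x_{0}) < k$, every $(k+1)$-minor vanishes to second order, the Zariski tangent space strictly exceeds the expected one, and $x_{0}\in \Sing(D_{k})$. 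The main subtlety lies in the genericity step: transferring the pointwise codimension bound on $Z_{k}$ to a statement about $D_{k}(\phi)$ for a general $\phi$ requires controlling the fibres of $p_{2}$, and this is precisely where the global generation hypothesis plays its essential role.
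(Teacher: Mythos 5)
The paper itself offers no proof of this statement --- it is quoted from B\u{a}nic\u{a} [B\u{a}n91, \S 4.1] --- so the only question is whether your argument stands on its own. Your incidence-variety setup is indeed the standard route, and the codimension half is essentially right, but the purity step as written is garbled. Semicontinuity (or, better, irreducibility of $Z_k$ together with the theorem on fibre dimension) gives that for general $\phi$ the fibre $D_k(\phi)$ has dimension $\dim Z_k-\dim W=(n-1)-(e-k)(f-k)$; this already excludes components of \emph{smaller} codimension. What remains to be excluded are components of \emph{larger} codimension, and the fact you need for that is the classical Eagon--Northcott/Macaulay bound that every nonempty component of \emph{any} determinantal locus has codimension \emph{at most} $(e-k)(f-k)$. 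Your sentence asserts the reverse inequality and claims it is ``forced pointwise''; that is false for an arbitrary $\phi$ (e.g.\ $\phi$ with proportional entries, or the zero map), and it is in any case the direction already supplied by the fibre-dimension count, so as written the purity argument does not close.

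The more serious gap is the claim that $\Sing(D_k)=D_{k-1}$ is a pointwise Jacobian computation needing no genericity. At a point with $\rk\phi_{x_0}=k$, the differentials of the $(k+1)$-minors of the local matrix $A(x)$ span an $(e-k)(f-k)$-dimensional space on the \emph{generic} determinantal variety; on the base they are obtained by composing with $dA_{x_0}$, and the span drops whenever the classifying map $x\mapsto A(x)$ fails to be transverse to the rank-$\le k$ stratum. Concretely, take $E=\mathcal{O}_{\pp^{2}}$, $F=\mathcal{O}_{\pp^{2}}(2)$, $k=0$, and $\phi$ given by the degenerate conic $x_0x_1$: every point of $D_0$ has rank exactly $0=k$, yet $D_0$ is singular at $[0:0:1]$. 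So smoothness along $D_k\setminus D_{k-1}$ cannot hold for arbitrary $\phi$, and your closing remark that global generation/genericity is needed only to control the fibres of $p_2$ is exactly wrong: it is needed here too. The standard repair is to observe that $Z_k\setminus Z_{k-1}$ is smooth (its fibres over $\pp^{n-1}$ are preimages, under the surjective evaluation, of the smooth locus of the generic determinantal variety) and to apply generic smoothness in characteristic zero (or Kleiman transversality) to $p_2\colon Z_k\setminus Z_{k-1}\to W$, which gives that $D_k(\phi)\setminus D_{k-1}(\phi)$ is smooth for general $\phi$. Your second-order-vanishing argument then correctly yields $D_{k-1}\subseteq\Sing(D_k)$, provided you add that for general $\phi$ the locus $D_k$ is reduced and of the expected, positive codimension.
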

Let us come back to $X \subset \PV$. Being $m < n-1$, we have that $D_{m-2}=\Sing(X)$ and $\codim_{\PV}(D_{m-2})=2(n-m+1)$; moreover, $\mathscr{L}$ has rank greater than one exactly in the points in which the corank of $\varphi$ is at least two. Since $D_{m-2}$ is empty if and only if $2(n-m+1) > n-1$, it turns out that
\begin{equation}
\label{2.smoothness}
X \mbox{ is smooth and }\mathscr{L}\mbox{ is a line bundle over X if and only if } n > 2m-3.
\end{equation}
Furthermore, one has
\begin{equation}
\label{2.codimSing}
\codim_{X}(\Sing(X))=n+2-m \geq 3.
\end{equation}
Let us observe that the dimension of $X$ is $m-1$, regardless of the dimension of the ambient space $\PV$.
\subsection{\texorpdfstring{$\PC$}{P(C)} and the Koszul complex}\rule{1pt}{0pt} \label{2.PCPE}\smskip
We refer to \cite[\textsection 3.2]{Ottaviani} for the following interpretation. Let $\varphi^{t}$ be the dual of $\varphi$; a morphism ${\varphi^{t}:\mappa{U^{*}\otimes \mathcal{O}_{\PV}}{\Omega_{\PV}(2)}}$ corresponds to $m$ global sections of $\Omega_{\PV}(2)$. By considering the global sections of the twisted dual Euler sequence
\begin{equation}
\label{2.eulertwisted}
\xymatrix{
0 \ar[r] &
\Omega_{\PV}(2) \ar[r]^-{\iota} &
V \otimes \mathcal{O}_{\PV}(1) \ar[r] &
\mathcal{O}_{\PV}(2) \ar[r] &
0
}
\end{equation}
we may identify $\Hh^{0}(\PV,\Omega_{\PV}(2))$ with $\Lambda^{2}V$, and therefore
\begin{equation*}
\varphi^{t} \in \Hom_{\PV}(U^{*} \otimes \mathcal{O}_{\PV},\Omega_{\PV}(2)) \cong U \otimes \Lambda^{2}V \subset U \otimes V \otimes V.
\end{equation*}
Since the last term is isomorphic to
\begin{equation*}
\Hom_{\mathbf{k}}(V^{*},V \otimes U) \cong \Hom_{\PU}(V^{*}\otimes \mathcal{O}_{\PU},V \otimes \mathcal{O}_{\PU}(1)),
\end{equation*}
the map $\varphi^{t}$ can be regarded also as an $(n \times n)$ matrix $N_{\varphi}$ of linear forms in $y_{0},\dotsc,y_{m-1}$. As $N_{\varphi}$ belongs to $U \otimes \Lambda^{2}V$, it turns out to be skew-symmetric.

If we compose $\varphi^{t}$ with the injection $\iota$ in (\ref{2.eulertwisted}), we get an $(n \times m)$ matrix $M_{\varphi}$ of linear forms in $x_{0},\dotsc,x_{n-1}$; the degeneracy locus does not change, so $X$ can be viewed also as the degeneracy locus of the morphism represented by the matrix $M_{\varphi}$.
\smskip The matrices $M_{\varphi}$ and $N_{\varphi}$ are linked as follows: they represent two different writings of the tensor ${\varphi^{t} \in U \otimes V \otimes V}$, where we consider the projectivization of the first, respectively the second, term. This corresponds to interchanging the roles of columns and variables: in formulas, if $(N_{\varphi})_{i,j}=\sum_{k=0}^{m-1}a_{i,j}^{k}y_{k}$, we get $(M_{\varphi})$ as in (\ref{2.mphi}) below. Therefore, the study of the degeneracy locus of a general $\varphi$ corresponds exactly to the study of the scheme cut out by the maximal minors of a general $(n \times m)$ matrix
\begin{equation}
\label{2.mphi}
M_{\varphi}=\left(
\begin{array}{ccc}
\sum_{i=0}^{n-1}\alpha_{i,0}^{0}x_{i} & \dotso & \sum_{i=0}^{n-1}\alpha_{i,0}^{m-1}x_{i} \\
\vdots & & \vdots\\
\sum_{i=0}^{n-1}\alpha_{i,n-1}^{0}x_{i} & \dotso & \sum_{i=0}^{n-1}\alpha_{i,n-1}^{m-1}x_{i}
\end{array}
\right)
\end{equation}
satisfying $a_{i,j}^{k}=-a_{j,i}^{k}$ for all $i,j,k$.

\meskip Thinking of $\varphi$ as a matrix $N_{\varphi}$ will be useful to provide a geometric interpretation of $X$; for this sake, we fix some notation. Let $\mathscr{E}$ be the cokernel of $N_{\varphi}:\mappa{V^{*}\otimes \mathcal{O}_{\PU}}{V \otimes \mathcal{O}_{\PU}(1)}$ and let $\PE=\Proj \Sym (\mathscr{E})$. The surjection $\mappa{V \otimes \mathcal{O}_{\PU}(1)}{\mathscr{E}}$ turns into an injection of $\PE$ inside ${\mathbb{P}(V \otimes \mathcal{O}_{\PU}(1))}$, which is isomorphic to ${\PU \times \PV}$; we will denote this product by $\mathcal{P}$ for short.
\smskip The same construction can be repeated for $\mathscr{C}$ (or $\mathscr{L}$), and one has $\mathbb{P}(\mathscr{L}) \cong \PC$ as a subscheme of $\mathbb{P}(U \otimes \mathcal{O}_{\PV}) \cong \mathcal{P}$. Let $p, q$ be the projections onto the first and the second factor and $\bar{p}, \bar{q}$ their restrictions to $\PC$; the diagram
\begin{equation}
\label{2.proiezioni}
\xymatrix{
\PU & \mathcal{P} \ar[l]^-{p} \ar[r]^-{q} & \PV \\
& \rule{0pt}{10pt}\PC \ar[ul]^-{\bar{p}} \ar[ur]^-{\bar{q}} \ar@{^{(}->}[u]
}
\end{equation}
commutes. In this situation, we have the canonical surjection
\begin{equation}
\label{2.canonicalsurj}
\xymatrix{
q^{*}(U \otimes \mathcal{O}_{\PV})
\ar[r] &
\mathcal{O}_{\mathbb{P}(U \otimes \mathcal{O}_{\PV})}(1) \cong \mathcal{O}_{\mathcal{P}}(1,0).
}
\end{equation}
The adjunction of direct and inverse image functors gives us an isomorphism
\begin{equation*}
\Hom_{\PV}\left(\mathcal{T}_{\PV}(-2),U \otimes \mathcal{O}_{\PV}\right) \cong \Hom_{\mathcal{P}}\left(q^{*}\mathcal{T}_{\PV}(-2),\mathcal{O}_{\mathcal{P}}(1,0)\right),
\end{equation*}
obtained in one direction by considering the composition of $q^{*}\varphi$ and the surjection (\ref{2.canonicalsurj}), in the other one by applying $q_{*}$. In this way, $\varphi$ can be regarded as a section $s_{\varphi}$ in $\Hh^{0}(\mathcal{P},p^{*}(\mathcal{O}_{\PU}(1))\otimes q^{*}\Omega_{\PV}(2))$, and we may define its zero locus $Y=\Vi(s_{\varphi}) \subset \mathcal{P}$.
\begin{lem}
\label{lemmascambio}
For any $\varphi \in \Hom_{\PV}(\mathcal{T}_{\PV}(2),U \otimes \mathcal{O}_{\PV})$ such that $X \neq \emptyset$, we have
\begin{equation*}
\PC \cong Y \cong \PE.
\end{equation*}
Moreover, $q(Y)=X$ and $p(Y)$ is the support of $\mathscr{E}$.
\begin{proof}
Consider an open subset $\mathcal{U}$ of $\PV$, trivializing $\mathcal{T}_{\PV}(2)$;
its preimage ${\mathcal{U}'=q^{-1}\mathcal{U}}$ is isomorphic to $\mathcal{U} \times \PU$. On the one hand, on $\mathcal{U}'$ the morphism $\varphi$ is represented by a matrix $\varphi_{\mathcal{U}'}$ and the equations describing $\PC \cap \mathcal{U}'$ are determined from the relation
\begin{equation}
\label{relameglioqi}
\nu \cdot \varphi_{\mathcal{U}'}(\mu) = 0,
\end{equation}
where $\nu \in \PU$ and $\mu \in \mathcal{U}$. Indeed, a quotient of $U \otimes \mathcal{O}_{\PV}$ induces a quotient of $\CC$ if and only if its composition with $\varphi$ is zero. On the other hand, imposing the vanishing of $s_{\varphi}$ gives rise to the same condition (\ref{relameglioqi}) on $\mathcal{U}'$.
\smskip This proves the first isomorphism; the same argument holds for the second one.
\end{proof}
\end{lem}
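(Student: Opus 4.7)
The plan is to show that $\PC$, $Y$, and $\PE$ are cut out, in local charts of $\mathcal{P}=\PU\times\PV$, by the very same set of bilinear equations; the isomorphisms then follow by gluing, and the statements about $q(Y)$ and $p(Y)$ are essentially immediate.

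First, I would fix an affine open $\mathcal{U}\subset\PV$ trivializing $\mathcal{T}_{\PV}(-2)$, so that on $\mathcal{U}'=q^{-1}(\mathcal{U})\cong\mathcal{U}\times\PU$ the map $\varphi$ is represented by a matrix $\varphi_{\mathcal{U}'}$ of regular functions on $\mathcal{U}$. By the universal property of $\mathbb{P}(\mathscr{C})\hookrightarrow\mathbb{P}(U\otimes\mathcal{O}_{\PV})\cong\mathcal{P}$, a point $(\mu,[\nu])$ lies in $\PC$ exactly when the rank-one quotient $U\to\mathbf{k}_{\nu}$ factors through $\mathscr{C}_{\mu}$, i.e.\ when $\nu\cdot\varphi_{\mathcal{U}'}(\mu)=0$. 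Next, tracing the adjunction
\begin{equation*}
\Hom_{\PV}(\mathcal{T}_{\PV}(-2), U\otimes\mathcal{O}_{\PV})\;\cong\;\Hom_{\mathcal{P}}(q^{*}\mathcal{T}_{\PV}(-2),\mathcal{O}_{\mathcal{P}}(1,0))
\end{equation*}
that defines $s_{\varphi}$, the value of $s_{\varphi}$ at $(\mu,[\nu])$ is nothing but the composition $\mathcal{T}_{\PV}(-2)_{\mu}\to U\to\mathbf{k}_{\nu}$, whose vanishing reads again $\nu\cdot\varphi_{\mathcal{U}'}(\mu)=0$. Gluing over an affine cover of $\PV$ then gives $\PC\cong Y$ as subschemes of $\mathcal{P}$.

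For the second isomorphism I would repeat the construction with the roles of $\PU$ and $\PV$ interchanged. As explained in Sect.~\ref{2.PCPE}, the tensor $\varphi^{t}\in U\otimes\Lambda^{2}V$ admits the two matrix presentations $M_{\varphi}$ and $N_{\varphi}$, corresponding to projectivizing the first or the second tensor factor of $U\otimes V$; on an affine open of $\PU$ trivializing $\mathcal{O}_{\PU}(1)$, the defining relation of $\PE\subset\mathbb{P}(V\otimes\mathcal{O}_{\PU}(1))\cong\mathcal{P}$ becomes $N_{\varphi}(\nu)\cdot\mu=0$, which is the same bilinear relation in $(\mu,[\nu])$ already obtained for $Y$; hence $\PE\cong Y$. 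The additional claims are now formal: $\mathscr{C}=i_{*}\mathscr{L}$ is supported on $X$, so $q(\PC)\subseteq X$, and any $x\in X$ has $\mathscr{C}_{x}\neq 0$, producing some $[\nu]\in\PU$ with $(x,[\nu])\in\PC$; the analogous argument gives $p(Y)=\supp(\mathscr{E})$.

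The only delicate point is the translation of the adjunction in the middle step, namely checking that $s_{\varphi}$ really vanishes on $\{\nu\cdot\varphi_{\mathcal{U}'}(\mu)=0\}$ rather than on some twist of it. Once the equations are identified on a single chart the gluing is automatic, so I do not expect any further obstacle beyond careful bookkeeping of the identifications $\mathbb{P}(U\otimes\mathcal{O}_{\PV})\cong\mathcal{P}\cong\mathbb{P}(V\otimes\mathcal{O}_{\PU}(1))$.
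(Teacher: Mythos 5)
Your proposal is correct and follows essentially the same route as the paper: identify $\PC\cap\mathcal{U}'$ and $Y\cap\mathcal{U}'$ with the locus $\nu\cdot\varphi_{\mathcal{U}'}(\mu)=0$ (via the quotient criterion for $\mathbb{P}(\mathscr{C})$ and the adjunction defining $s_{\varphi}$), then repeat the argument with the roles of $\PU$ and $\PV$ interchanged for $\PE$. Your extra verification of $q(Y)=X$ and $p(Y)=\supp(\mathscr{E})$ is a harmless addition that the paper leaves implicit.
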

Let $E \boxtimes F$ denote the tensor product $p^{*}E \otimes q^{*} F$ for any pair of sheaves $E$ on $\PU$ and $F$ on $\PV$. The scheme $Y$ is the zero locus of the section $s = s_{\varphi}$ of the vector bundle $\mathcal{O}_{\PU}(1)\boxtimes \Omega_{\PV}(2)$ on $\mathcal{P}$, so we can construct the Koszul complex on $\mathcal{P}$
\begin{equation}
{%\small
\label{2.koszul}
%\begin{split}
\begin{array}{c}
\xymatrix{
0 \ar[r] &
 \mathcal{O}_{\PU}(1-n) \boxtimes \mathcal{O}_{\PV}(2-n)  \ar[r]^-{\epsilon_{n-1}} &
 \mathcal{O}_{\PU}(2-n) \boxtimes \Omega_{\PV}(4-n) \ar[r]^-{\epsilon_{{n-2}}} &} \\
\xymatrix{\ar[r] & \dotso \ar[r]^-{\epsilon_{2}} &
\mathcal{O}^{}_{\PU}(-1) \boxtimes \Omega^{n-2}_{\PV}(n-2) \ar[r]^-{\epsilon_{1}} &
\mathcal{O}_{\mathcal{P}} \ar[r]^-{\epsilon_{0}} &
\mathcal{O}_{Y} \ar[r] &
0,
}
\end{array}
}
%\end{split}
\end{equation}
where we made use of the isomorphisms $\Lambda^{p}\mathcal{T}_{\PV} \cong \Omega^{n-p-1}_{\PV}(n)$. Being $s$ general, this complex is exact.
\subsection{Geometric interpretation of \texorpdfstring{$X$}{X}}\rule{0pt}{0pt}\smskip
Let us focus on $\bar{q}:\mappa{\PC}{X}$, given by the restriction of $q$ as in diagram (\ref{2.proiezioni}). \label{2.geometricint}
\smskip By (\ref{2.smoothness}), if $n > 2m-3$ then $\PC \cong X$ via $\bar{q}$, as $\mathscr{L}$ is a line bundle over $X$. If $X$ is not smooth, then the restriction of $\mathscr{L}$ to the smooth locus $X^{\sm}$ of $X$ is still a line bundle, so we have an isomorphism $\mappa{{\bar{q}}^{-1}(X^{\sm})}{X^{\sm}}$ induced by $\bar{q}$.
\smskip The regular map $\bar{q}$ is not invertible on the subscheme ${Y'}:={\bar{q}}^{-1}(\Sing(X))$. We saw that $\Sing(X)=D_{m-2}$, so the fibers of $\mathscr{C}$ on the general point of $\Sing(X)$ have dimension two. By inequality (\ref{2.codimSing}) we have
\begin{equation}
\label{2.codimrelative}
\codim_{Y}({Y'})=\codim_{X}(\Sing(X))-1 \geq 2.
\end{equation}

We have shown before that $\PC$ may be regarded as the zero locus $Y$ of a general section of a globally generated vector bundle. This implies that $Y$ is smooth for the general choice of $\varphi$. Moreover, $\PC$ can be interpreted also as $\PE$, where $\mathscr{E}$ is the cokernel of a skew-symmetric matrix. We are able to provide a geometric description of such $\PE$, which depends strongly on the parity of $n$.

If $n$ is even, then $N_{\varphi}$ is a skew-symmetric matrix of even order, whose cokernel $\mathscr{E}$ is a rank-two sheaf supported on the hypersurface described by the Pfaffian of $N_{\varphi}$; such hypersurface is singular as soon as $m \geq 7$. The projectivization $\PE$ is then a scroll over (an open subset of) this Pfaffian hypersurface.% This case has been considered in \cite{FaenziFania}, with the additional hypothesis $n > 2m-3$.

If $n$ is odd, $N_{\varphi}$ has odd order and so its determinant is zero; $\mathscr{E}$ is a rank-one sheaf on $\PU$. The locus where $\mathscr{E}$ has higher rank is exactly the subscheme $Z$ defined by the $(n-1) \times (n-1)$ Pfaffians of $N_{\varphi}$. Let $I$ be the ideal of $Z$; for a general $N_{\varphi}$, it satisfies
\[
\pd_{R}(R/I)=\depth(I,R)=\codim_{R}(I)=3,
\] 
being $R=\mathbf{k}[y_{0},\dotsc,y_{m-1}]$. Indeed, the second and the third term always agree (see, for instance, \cite[Theorem 18.7]{Eisenbud}); the first equality is due to Buchsbaum-Eisenbud Structure Theorem \cite{BuchsbaumEisenbudAlg}.

The surjection $\mappa{V \otimes \mathcal{O}_{\PU}}{\mathscr{E}}$ is given by the Pfaffians of $N_{\varphi}$, so ${\mathscr{E}}$ can be identified with ${\mathcal{I}_{Z}(\frac{n-1}{2})}$. Therefore, $\PE$ is the blow-up of $\PU$ along $Z$ (see, for example, \cite[Theorem IV-23]{EisenbudHarris}). Viewed as a subscheme of $\mathcal{P}$, $\PE$ is the closure of the graph of the map given by the $(n-1) \times (n-1)$ Pfaffians of $N_{\varphi}$.

\begin{lem}
\label{2.propX}
The degeneracy locus $X$ is a normal, irreducible variety.
\begin{proof}
Being normal is a local property, but $X$ is locally a general determinantal subscheme, and they are known to be normal. The irreducibility follows from the geometric description just given; when $n$ is even, we observe that the general Pfaffian hypersurface in $\PU$ is irreducible and $X$ is birational to $Y$, which is the closure in $\mathcal{P}$ of a scroll over this hypersurface. When $n$ is odd, $X$ is birational to a blow-up of $\PU$.
\smskip To show that it is a variety, we note that $X$ is of pure dimension, as it has the expected codimension. So it suffices to prove that it is generically smooth, but this follows from (\ref{2.codimSing}).
\end{proof}
\end{lem}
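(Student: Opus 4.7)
The plan is to handle the three assertions (normal, irreducible, variety) separately, exploiting the fact that everything needed has essentially been prepared in the preceding subsections.

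For \emph{normality}, since this is a local condition I would reduce to a statement about general determinantal subschemes. By construction $X$ is cut out locally on $\PV$ by the maximal minors of the matrix $M_{\varphi}$ of (\ref{2.mphi}), which for general $\varphi$ is a generic matrix of linear forms in the sense required by the classical Eagon--Hochster type results. Thus the singular locus $\Sing(X)=D_{m-2}$ has codimension $2(n-m+1)$, which by (\ref{2.codimSing}) is at least $3$ in $X$; combined with the fact that the ideal of maximal minors of a generic matrix is Cohen--Macaulay, the Serre criterion $R_{1}+S_{2}$ gives normality. I would simply cite the local statement for general determinantal varieties.

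For \emph{irreducibility}, I would pass through the geometric description of $X$ given in Sect.~\ref{2.geometricint}. By Lemma~\ref{lemmascambio} we have $X=q(Y)$ with $Y\cong \PC\cong\PE$, so it suffices to show that $Y$ is irreducible. Here the argument splits according to the parity of $n$: when $n$ is even, $\PE$ is a $\mathbb{P}^{1}$-bundle over (an open subset of) the Pfaffian hypersurface of $N_{\varphi}\subset \PU$, which for general $\varphi$ is an irreducible hypersurface, hence $\PE$ is irreducible; when $n$ is odd, $\PE$ is the blow-up of $\PU$ along the scheme $Z$ of sub-maximal Pfaffians, which is irreducible because $\PU$ is. In both cases the projection $q|_{Y}$ transports irreducibility to $X$.

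For the \emph{variety} (i.e.~reducedness) claim, I would invoke pure-dimensionality and generic reducedness. The scheme $X$ has the expected codimension $n-m$, so it is equidimensional (in fact Cohen--Macaulay, being a maximal-minor determinantal subscheme). By the bound (\ref{2.codimSing}) the singular locus has codimension at least three in $X$, so $X$ is smooth, and in particular reduced, at its generic point; pure-dimensionality then promotes generic reducedness to reducedness everywhere.

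The one potentially delicate step is the irreducibility of the Pfaffian hypersurface when $n$ is even for general $\varphi$, as this is the only place where generality is really used beyond what was set up earlier; however, this is classical (the generic Pfaffian hypersurface is known to be irreducible), and in any case it has already been invoked implicitly in the geometric description, so I would cite it rather than re-prove it.
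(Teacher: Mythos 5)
Your proposal is correct and follows essentially the same route as the paper: normality via the classical local statement for general determinantal subschemes, irreducibility via the geometric description of $Y\cong\PE$ (scroll over the irreducible Pfaffian hypersurface for $n$ even, blow-up of $\PU$ for $n$ odd) transported through $\bar{q}$, and reducedness from expected codimension plus generic smoothness via (\ref{2.codimSing}). Your extra remarks (Serre's criterion $R_{1}+S_{2}$ and Cohen--Macaulayness, the latter being what really upgrades generic reducedness to reducedness) only make explicit what the paper leaves implicit.
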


By \cite[Proposition 2.1]{GrusonPeskine}, the dualizing sheaf of $X$ is
\begin{equation}
\label{2.canonicoX}
\omega_{X} = S^{n-m-1}\mathscr{L} \otimes \mathcal{O}_{\PV}(\ellecan),
\end{equation}
where $S^{i}$ denotes the $i$-th symmetric power.
%%%%%%%%%%%%%%%%%%
\subsection{Hilbert schemes and Grassmannians}
\label{2.hilbschandgr}
\rule{1pt}{0pt}\smskip
Our aim is to provide a description of the Hilbert scheme of the degeneracy loci arising from $\varphi$, as $\varphi$ varies. For this sake, we define $\mathcal{H}$ to be the union of the irreducible components, in the Hilbert scheme, containing the degeneracy loci $X$ coming from  general choices of $\varphi$.

We have a natural rational map
\[\xymatrix{
\Hom(\mathcal{T}_{\PV}(-2),U \otimes \mathcal{O}_{\PV}) \ar@{-->}[r] & \mathcal{H},}
\]
sending $\varphi$ to the point representing its degeneracy locus. The group $\GL(U)$ induces an action on $\Hom(\mathcal{T}_{\PV}(-2),U \otimes \mathcal{O}_{\PV})$, by multiplication on the left of the matrix $M_{\varphi}$ (\ref{2.mphi}) associated to $\varphi$. The equations cutting out locally the degeneracy locus may change, but the ideal described does not and so the rational map above factors through this action.
\smskip Recall that $\varphi$ can be seen also asz a $(n \times n)$ skew-symmetric matrix $N_{\varphi}$ of linear forms in $\mathbf{k}[y_{0},\dotsc,y_{m-1}]$, or as an $m$-uple of elements in $\Lambda^{2}V$. With this interpretation, an element of $\GL(U)$ acts as a projectivity on these $m$ elements; it does not affect the linear space spanned by them, so the orbit is generically an element of the Grassmannian $\GRA$.

We get the following scenario:
\begin{equation*}
\xymatrix{
\Hom(\mathcal{T}_{\PV}(-2),U \otimes \mathcal{O}_{\PV}) \ar@{-->}[r] \ar@{-->}[d] & \mathcal{H}\\
\GRA \ar@{-->}[ur]_-{\rho}}
\end{equation*}
\rosso{As mentioned in the introduction, the behavior of the map $\rho$ is known in a few cases. The goal of this paper is to prove that the birationality of $\rho$ holds as soon as $m \geq 4$, and to explain why such birationality is missing in the case $m=3$.}
%The behavior of the map $\rho$ is known in the cases
%\begin{itemize}
%\item $(m,n)= (3,5)$: $X$ is a projected Veronese surface in $\mathbb{P}^{4}$. From the results contained in \cite{Castelnuovo}, $\rho$ can be proved to be birational;
%\item $(m,n)= (3,6)$: $X$ is an elliptic scroll surface of degree 6. It was proved in \cite{BazanMezzetti}, and in fact classically known to Fano \cite{Fano}, that $\rho$ is dominant and $4:1$;
%\item $(m,n)= (4,6)$: $X$ is the Palatini scroll, which is, according to a conjecture by Peskine, the unique smooth threefold in $\mathbb{P}^{5}$ not quadratically normal. In this case, $\rho$ turns out to be birational, as shown in \cite{FaniaMezzetti};
%\item $(m,n)$ such that $n$ is even and $n > 2m-3 > 1$: $X$ is a scroll over a smooth Pfaffian hypersurface in $\mathbb{P}^{m-1}$ as long as $m \leq 6$, otherwise it is the projectivization of a rank-two sheaf over a singular Pfaffian hypersurface. The map $\rho$ is generically injective for $m=3$, $n \geq 8$ and birational for $m \geq 4$ \cite{FaenziFania}.
%\end{itemize}
%Starting from this historical account, it is natural to ask whether the map $\rho$ is birational in the missing cases, e.g.~when $n$ is odd or when $X$ is singular. This is the goal of this paper.

%%%%%%%%%%%%%%%%%%%%%%%%%%%%%%%%%%%%%%%%%%%%%%%%%%%%%%%%%%%%%%%%%%%%%%%%%%%%%%%%%%%%%%%%%%%%%%%%%%%
%%%%%%%%%%%%%%%%%%%%%%%%%%%%%%%%%%%%%%%%%%%%%%%%%%%%%%%%%%%%%%%%%%%%%%%%%%%%%%%%%%%%%%%%%%%%%%%%%%%

\section{The normal sheaf \texorpdfstring{$\mathcal{N}$}{N}}
\label{2.normalbu}
Within this section, we will show how can the normal sheaf $\mathcal{N}:=\mathcal{N}_{X/\PV}$ be expressed by means of $\mathscr{C}$. This study will provide an upper bound for the dimension of $\mathcal{H}$, thanks to Grothendieck's Theorem (\cite{GrothendieckFondements,HartshorneDeformation}).
\begin{lem}
\label{LReflexive}
The sheaf $\mathscr{L}$, defined in Sect.~\ref{notadimesingu}, is reflexive.
\begin{proof}
Recall that $X$ is normal and integral by Lemma \ref{2.propX}. By \cite[Proposition 1.6]{HartshorneStable}, $\mathscr{L}$ is reflexive if and only if it is torsion-free and normal; a coherent sheaf $\mathcal{F}$ is said to be normal if, for every open set $U \subseteq X$ and every closed subset $Z \subset U$ of codimension at least two, the restriction map $\mappa{\mathcal{F}(U)}{\mathcal{F}(U \setminus Z)}$ is bijective \cite{Barth}.
\smskip The torsion-freeness of $\mathscr{L}$ follows from the fact that $\mathscr{L}$ is a Cohen-Macaulay sheaf. Indeed, let $\mathscr{M}_{x}$ be the maximal ideal of the local ring $\odi{X,x}$. Since $\mathscr{C}|_{D_{k}\setminus D_{k-1}}$ is a vector bundle of rank $m-k$ on $D_{k}\setminus D_{k-1}$ (cfr.~Sect.~\ref{notadimesingu}), by the Auslander-Buchsbaum formula we have $\depth(\mathscr{M}_{x},\mathscr{L}_{x})=\dim \odi{X,x}$ for any $x$, hence $\mathscr{L}_{x}$ is a Cohen-Macaulay module.
\smskip To show that $\mathscr{L}$ is normal, we first observe that $\odi{\CC}(1)$ is reflexive, hence normal itself. If $U$ is an open subset of $X$ and $Z$ a closed subset of $X$ of codimension at least two, then $\bar{q}^{-1}(U)$ is open in $Y$ and $\bar{q}^{-1}(Z)$ is closed of codimension at least two. The conclusion follows since
\[
\xymatrix{
\mathscr{L}(U) = (\odi{\CC}(1))(\bar{q}^{-1}(U))
\ar[r] &
(\odi{\CC}(1))(\bar{q}^{-1}(U\setminus Z))=\mathscr{L}(U\setminus Z)
}
\]
is bijective.
\end{proof}
\end{lem}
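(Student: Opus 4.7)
My plan is to invoke Hartshorne's characterization: on the normal integral variety $X$ (Lemma \ref{2.propX}), a coherent sheaf is reflexive if and only if it is torsion-free and normal in the sense of Barth, where normality means that sections extend uniquely across closed subsets of codimension at least two. Thus the lemma splits into two sub-tasks.

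For torsion-freeness, I would argue that $\mathscr{L}$ is Cohen-Macaulay stalkwise and deduce torsion-freeness as a byproduct. Recall from Sect.~\ref{notadimesingu} that on the locally closed stratum $D_{k}\setminus D_{k-1}$ the sheaf $\mathscr{C}$ is locally free of rank $m-k$, so at a point $x \in D_k \setminus D_{k-1}$ the module $\mathscr{L}_x$ admits a finite free resolution of length equal to $\codim_X(D_k\setminus D_{k-1})$ (obtainable, for instance, from the Eagon-Northcott-type complex resolving the $(k{+}1)$-minor ideal). The Auslander-Buchsbaum formula then forces $\depth(\mathscr{M}_x,\mathscr{L}_x) = \dim \mathcal{O}_{X,x}$, giving Cohen-Macaulayness and, a fortiori, torsion-freeness of $\mathscr{L}_x$.

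For the Barth normality condition, my strategy is to transfer the question to $\PC$ via the birational map $\bar q:\mappa{\PC}{X}$ of Sect.~\ref{2.geometricint}. Since $\PC \cong Y$ is smooth (being the zero locus of a general section of a globally generated bundle), the tautological line bundle $\mathcal{O}_{\PC}(1)$ is reflexive, hence normal. The link with $\mathscr{L}$ is that, for any open $U \subseteq X$, sections of $\mathscr{L}$ over $U$ agree with sections of $\mathcal{O}_{\PC}(1)$ over $\bar q^{-1}(U)$, because $\bar q$ is an isomorphism over $X^{\sm}$ and the remaining locus $\bar q^{-1}(\Sing(X))$ has codimension $\geq 2$ in $\PC$ by \eqref{2.codimrelative}. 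Consequently, if $Z \subset U$ is closed of codimension $\geq 2$ in $X$, then $\bar q^{-1}(Z) \subset \bar q^{-1}(U)$ is again closed of codimension $\geq 2$, and the normality of $\mathcal{O}_{\PC}(1)$ upstairs yields the required bijectivity of $\mappa{\mathscr{L}(U)}{\mathscr{L}(U\setminus Z)}$.

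The step I expect to require the most care is the codimension bookkeeping in the last paragraph: one must ensure that passing to preimages under $\bar q$ does not drop codimension, which is exactly the content of inequality \eqref{2.codimrelative}. Once this is in place, the Cohen-Macaulay argument for torsion-freeness and the transfer-to-$\PC$ argument for normality combine to give reflexivity of $\mathscr{L}$.
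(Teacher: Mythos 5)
Your proposal follows essentially the same route as the paper's own proof: the Hartshorne--Barth criterion (reflexive $=$ torsion-free $+$ normal on the normal integral $X$), torsion-freeness via the stratumwise local freeness of $\mathscr{C}$ and the Auslander--Buchsbaum formula giving Cohen--Macaulayness of $\mathscr{L}$, and normality transferred through $\bar{q}$ to the reflexive sheaf $\odi{\CC}(1)$ on the smooth $\PC\cong Y$, with the codimension bookkeeping handled exactly by (\ref{2.codimrelative}). The only cosmetic difference is your informal justification of the identification $\mathscr{L}(U)=(\odi{\CC}(1))(\bar{q}^{-1}(U))$, which the paper states directly (it rests on $\bar{q}_{*}\odi{\CC}(1)\cong\mathscr{L}$, cf.\ Remark \ref{2.usefulcomputations}), so the argument is correct and essentially identical.
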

\begin{lem}
\label{2.HomCC}
In the settings of Sect.~\ref{2.notations}, we have $\HOM_{X}(\mathscr{L},\mathscr{L})\cong \mathcal{O}_{X}$% and therefore $\HOM_{\PV}(\mathscr{C},\mathscr{C})\cong i_{*}\mathcal{O}_{X}$
.
\begin{proof}
The lemma is trivial when $\mathscr{L}$ is a line bundle, i.e.~when $n>2m-3$. For the general case, we look at the map
\begin{equation}
\label{2.symmetricinjection}
\xymatrix{
\HOM_{X}(\mathscr{L},\mathscr{L}) \ar[r] &
\HOM_{X}(S^{n-m-1}\mathscr{L},S^{n-m-1}\mathscr{L})
}
\end{equation}
given by $f \mapsto f^{n-m-1}$. The term on the right is isomorphic to $\mathcal{O}_{X}$, by (\ref{2.canonicoX}) and since by \cite[Proposition 2.1]{GrusonPeskine} we have
\[
\HOM_{X}(\omega_{X},\omega_{X})\cong \mathcal{O}_{X}.
\]
The sheaf $\mathscr{L}$ is torsion free by Lemma \ref{LReflexive}. The sheaf $\HOM_{X}(\mathscr{L},\mathscr{L})$ is torsion-free too: indeed, it is a subsheaf of the direct sum of $m$ copies of $\mathscr{L}$, as it results by applying $\HOM_{X}(-,\mathscr{L})$ to sequence (\ref{2.mappakercoker}) restricted to $X$. The map (\ref{2.symmetricinjection}) is then a non-zero map between two rank-one torsion-free sheaves, so its kernel vanishes.
\smskip The lemma is proved as soon as we consider the following chain:
\begin{equation*}
\xymatrix{\mathcal{O}_{X} \ar@{^{(}->}[r] &
\HOM_{X}(\mathscr{L},\mathscr{L}) \ar@{^{(}->}[r] &
\HOM_{X}(S^{n-m-1}\mathscr{L},S^{n-m-1}\mathscr{L}) \cong \mathcal{O}_{X}.
} \qedhere
\end{equation*}
\end{proof}
\end{lem}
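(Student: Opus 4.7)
The plan is to reduce the general case to the smooth case by leveraging the relationship between $\mathscr{L}$ and the dualizing sheaf. When $n > 2m-3$, $\mathscr{L}$ is a line bundle by (\ref{2.smoothness}), so $\HOM_{X}(\mathscr{L},\mathscr{L}) \cong \mathcal{O}_{X}$ tautologically. For the general case, the key input is formula (\ref{2.canonicoX}): $\omega_{X} = S^{n-m-1}\mathscr{L} \otimes \mathcal{O}_{\PV}(-2)$. Since $X$ is normal and integral by Lemma \ref{2.propX} and $\omega_{X}$ is a rank-one reflexive sheaf, the standard fact $\HOM_{X}(\omega_{X},\omega_{X}) \cong \mathcal{O}_{X}$ (cited from Gruson--Peskine) gives, after cancelling the invertible twist, $\HOM_{X}(S^{n-m-1}\mathscr{L}, S^{n-m-1}\mathscr{L}) \cong \mathcal{O}_{X}$.

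The next step is to introduce the natural $\mathcal{O}_{X}$-linear map
$$ \HOM_{X}(\mathscr{L},\mathscr{L}) \longrightarrow \HOM_{X}(S^{n-m-1}\mathscr{L}, S^{n-m-1}\mathscr{L}), \qquad f \longmapsto S^{n-m-1}(f), $$
which sends the identity to the identity and is therefore non-zero. To promote this to an injection, I would check that both source and target are torsion-free of generic rank one. Rank one is clear generically because $\mathscr{L}$ restricts to a line bundle on the smooth locus $X^{\sm}$, whose complement has codimension at least $3$ by (\ref{2.codimSing}). For torsion-freeness of the source, applying $\HOM_{X}(-,\mathscr{L})$ to the surjection $U \otimes \mathcal{O}_{X} \twoheadrightarrow \mathscr{L}$ obtained by restricting (\ref{2.mappakercoker}) to $X$ embeds $\HOM_{X}(\mathscr{L},\mathscr{L})$ inside $U^{*} \otimes \mathscr{L}$, and $\mathscr{L}$ is torsion-free by Lemma \ref{LReflexive}; the target is isomorphic to $\mathcal{O}_{X}$, so this is automatic.

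Finally, the identity of $\mathscr{L}$ furnishes a canonical inclusion $\mathcal{O}_{X} \hookrightarrow \HOM_{X}(\mathscr{L},\mathscr{L})$, which composed with the injection above gives the chain
$$ \mathcal{O}_{X} \hookrightarrow \HOM_{X}(\mathscr{L},\mathscr{L}) \hookrightarrow \mathcal{O}_{X}, $$
and the composition being the identity forces both arrows to be isomorphisms. The main obstacle I anticipate is justifying that a non-zero morphism between these sheaves is injective; this ultimately rests on their generic rank-one behavior together with the reflexivity of $\mathscr{L}$ from the previous lemma, so much of the technical content is really packaged into Lemma \ref{LReflexive} and the normality statement of Lemma \ref{2.propX}.
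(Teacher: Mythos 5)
Your proposal is essentially identical to the paper's proof: the same map $f \mapsto f^{n-m-1}$ into $\HOM_{X}(S^{n-m-1}\mathscr{L},S^{n-m-1}\mathscr{L})\cong\mathcal{O}_{X}$ (via (\ref{2.canonicoX}) and Gruson--Peskine), the same torsion-freeness argument embedding $\HOM_{X}(\mathscr{L},\mathscr{L})$ into $m$ copies of $\mathscr{L}$ using Lemma \ref{LReflexive}, and the same sandwich $\mathcal{O}_{X}\hookrightarrow\HOM_{X}(\mathscr{L},\mathscr{L})\hookrightarrow\mathcal{O}_{X}$. The only cosmetic slip (shared in spirit with the paper's informality) is calling the power map $\mathcal{O}_{X}$-linear and the composite ``the identity''; it is only multiplicative, but the injectivity argument via rank-one torsion-free sheaves is exactly what the paper uses.
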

\begin{propo}
\label{2.inormale}
In the settings of Sect.~\ref{2.notations}, we have $i_{*} \mathcal{N} \cong \EXT^{1}_{\PV}(\mathscr{C},\mathscr{C})$ \rosso{\cite[Lemma 3.5]{FaenziFaniaDeterminantal}}.
\begin{proof}
The normal sheaf can be characterized also via the isomorphism
\begin{equation*}
i_{*}\mathcal{N} \cong \EXT_{\PV}^{1}\left(i_{*}(\mathcal{O}_{X}),i_{*}(\mathcal{O}_{X})\right),
\end{equation*}
so it is sufficient to show that $\EXT^{1}_{\PV}(\mathscr{C},\mathscr{C})$ is isomorphic to the right-hand-side. In the forthcoming Lemma \ref{lemmaseqspettt} we will show that there is a spectral sequence 
\begin{equation*}
E^{p,q}_{2}=\EXT^{p}_{\PV}(i_{*}(\mathcal{O}_{X}),i_{*}(\EXT^{q}_{X}(\mathscr{L},\mathscr{L}))) \Rightarrow \EXT^{p+q}_{\PV}(\mathscr{C},\mathscr{C}).
\end{equation*}
By Lemma \ref{2.HomCC}, the conclusion holds if we show that $\EXT^{1}_{X}(\mathscr{L},\mathscr{L})=0$. By adjunction we get
\begin{equation*}
\EXT^{1}_{X}(\mathscr{L},\mathscr{L}) \cong \EXT^{1}_{Y}({\bar{q}}^{*}(\mathscr{L}),\mathcal{O}_{Y}(1,0)).
\end{equation*}

Recall that $Y \cong \mathbb{P}(\mathscr{L})$, so on $Y$ we have a short exact sequence
\begin{equation*}
\xymatrix{
0 \ar[r] &
\Omega \ar[r] &
{\bar{q}}^{*}\mathscr{L} \ar[r] &
\mathcal{O}_{Y}(1,0) \ar[r] &
0,
}
\end{equation*}
where $\Omega$ is the kernel of the canonical surjection on the right: it may be considered as the relative cotangent sheaf of $\bar{q}$. Moreover, it is supported on ${Y'}$. If we apply the functor $\HOM_{Y}(-,\mathcal{O}_{Y}(1,0))$ to the short exact sequence above, we get
\begin{equation*}
\xymatrix{
\EXT_{Y}^{1}(\Omega,\mathcal{O}_{Y}(1,0)) \ar[r] &
\EXT_{Y}^{1}({\bar{q}}^{*}(\mathscr{L}),\mathcal{O}_{Y}(1,0)) \ar[r] &
0,
}
\end{equation*}
as $\mathcal{O}_{Y}(1,0)$ is a line bundle on $Y$. The first sheaf vanishes since its support, by (\ref{2.codimrelative}), has codimension at least two, so the second one vanishes too.
\end{proof}
\end{propo}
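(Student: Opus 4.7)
The plan is built around two ingredients: the standard identification $i_*\mathcal{N}_{X/\PV} \cong \EXT^1_{\PV}(i_*\mathcal{O}_X, i_*\mathcal{O}_X)$, and the fact that $\mathscr{C} = i_*\mathscr{L}$. To bridge these, I would invoke a Grothendieck-type composition-of-functors spectral sequence
\[
E^{p,q}_2 = \EXT^p_{\PV}(i_*\mathcal{O}_X, i_*\EXT^q_X(\mathscr{L}, \mathscr{L})) \Rightarrow \EXT^{p+q}_{\PV}(\mathscr{C}, \mathscr{C}),
\]
whose construction I would isolate as a separate lemma (analogous to the forthcoming Lemma \ref{lemmaseqspettt} in the paper). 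Morally, it arises from writing $\HOM_{\PV}(\mathscr{C},-) = \HOM_{\PV}(i_*\mathcal{O}_X,-)\circ i_*\HOM_X(\mathscr{L},i^*(-))$ and checking acyclicity of each piece on injectives.

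Granted this spectral sequence, the $q=0$ row reads $\EXT^p_{\PV}(i_*\mathcal{O}_X, i_*\mathcal{O}_X)$ by Lemma \ref{2.HomCC}, and on the $p+q=1$ diagonal the only extra potential contribution is $E^{0,1}_2 = \HOM_{\PV}(i_*\mathcal{O}_X, i_*\EXT^1_X(\mathscr{L}, \mathscr{L}))$ (the differential $d_2\colon E^{0,1}_2\to E^{2,0}_2$ is harmless either way). Hence the proposition reduces to proving the vanishing
\[
\EXT^1_X(\mathscr{L}, \mathscr{L}) = 0.
\]

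For this, I would pass to $Y$: by Lemma \ref{lemmascambio}, $Y \cong \PC \cong \mathbb{P}(\mathscr{L})$, with tautological quotient $\bar{q}^*\mathscr{L} \twoheadrightarrow \mathcal{O}_Y(1,0)$. Adjunction for $\bar{q}$ gives
\[
\EXT^1_X(\mathscr{L}, \mathscr{L}) \cong \EXT^1_Y(\bar{q}^*\mathscr{L}, \mathcal{O}_Y(1,0)).
\]
The kernel $\Omega$ of the tautological quotient is supported on $Y' = \bar{q}^{-1}(\Sing(X))$, since $\bar{q}$ is an isomorphism over $X^{\sm}$. Applying $\HOM_Y(-, \mathcal{O}_Y(1,0))$ to $\mappa{0 \to \Omega \to \bar{q}^*\mathscr{L} \to \mathcal{O}_Y(1,0)}{0}$, and using that $\mathcal{O}_Y(1,0)$ is a line bundle on the (generically) smooth $Y$ so $\EXT^{\geq 1}_Y(\mathcal{O}_Y(1,0),\mathcal{O}_Y(1,0))=0$, I obtain a surjection
\[
\EXT^1_Y(\Omega, \mathcal{O}_Y(1,0)) \twoheadrightarrow \EXT^1_Y(\bar{q}^*\mathscr{L}, \mathcal{O}_Y(1,0)).
\]
Finally, (\ref{2.codimrelative}) yields $\codim_Y(Y') \geq 2$, and the standard fact that $\EXT^i(\mathcal{F}, L) = 0$ for $i<c$ when $\mathcal{F}$ is supported in codimension $\geq c$ on a smooth scheme and $L$ is locally free closes the argument.

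The main obstacle is the spectral sequence construction, which requires verifying that the relevant functors take injectives to acyclics; this is standard but bookkeeping-heavy and deserves to be isolated. The second delicate point is the codimension-two vanishing at the end, which crucially uses both the smoothness of $Y$ (guaranteed by generality of $\varphi$, as noted earlier) and the sharp bound in (\ref{2.codimrelative}) coming ultimately from the hypothesis $m<n-1$ via (\ref{2.codimSing}).
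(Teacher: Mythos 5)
Your proposal is correct and follows essentially the same route as the paper: the identification $i_{*}\mathcal{N}\cong\EXT^{1}_{\PV}(i_{*}\mathcal{O}_{X},i_{*}\mathcal{O}_{X})$, the composition-of-functors spectral sequence (isolated as a separate lemma, exactly as in the paper), the reduction via Lemma \ref{2.HomCC} to $\EXT^{1}_{X}(\mathscr{L},\mathscr{L})=0$, and the adjunction to $Y\cong\mathbb{P}(\mathscr{L})$ with the tautological sequence and the codimension bound (\ref{2.codimrelative}). Your explicit appeal to the grade argument for the final vanishing is in fact a slightly more careful justification of the paper's condensed closing sentence, but it is the same argument.
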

\begin{lem}
\label{lemmaseqspettt}
We have the following cohomological spectral sequence:
\[
E^{p,q}_{2}=\EXT^{p}_{\PV}(i_{*}(\mathcal{O}_{X}),i_{*}(\EXT^{q}_{X}(\mathscr{L},
\mathscr{L}))) \Rightarrow \EXT^{p+q}_{\PV}(\mathscr{C},\mathscr{C}).
\]
\begin{proof}
Let $\mathcal{E}$, $\mathcal{F}$ be two coherent sheaves on $X$ and consider the two functors
\[
\Psi=\HOM_{\PV}(i_{*}(\odi{X}),i_{*}(-)):\xymatrix{\Coh(X) \ar[r] & \Coh(\PV)}
\]
and
\[
\Phi=\HOM_{X}(\mathcal{E},-):\xymatrix{\Coh(X) \ar[r] & \Coh(X)}.
\]
Their composition $\Psi \circ \Phi$ sends $\mathcal{F}$ to
\begin{equation}
\label{isodafaenzi}
\HOM_{\PV}(i_{*}(\odi{X}),i_{*}(\HOM_{X}(\mathcal{E},\mathcal{F}))) \cong \HOM_{\PV}(i_{*}(\mathcal{E}),i_{*}(\mathcal{F})).
\end{equation}
We can see the last isomorphism by working locally on $\Spec(A) 
\subset X$ and on ${\Spec(B) \subset \PV}$, replacing $i$ with the 
closed embedding $\mappa{\Spec(A)}{\Spec(B)}$ induced by a 
surjective map of $\mathbf{k}$-algebras $\mappa{B}{A}$. $
\mathcal{E}$ and $\mathcal{F}$ are locally replaced by finitely 
generated $A$-modules $M$, $N$, which may be regarded as $B$-modules as well. To prove
(\ref{isodafaenzi}) it is sufficient to exhibit an isomorphism
\[
\Hom_{B}(M,N) \cong \Hom_{A}(A,\Hom_{A}(M,N));
\]
for this sake, we consider the $B$-morphism taking $u:\mappa{M}{N}$ to the $A$-morphism taking $1_{A}$ to $u$ regarded as an $A$-morphism. It is straightforward to check that this is indeed an isomorphism.
\smskip The spectral sequence in the statement follows from the Grothendieck's spectral sequence associated to the composition of the two left-exact functors $\Psi \circ \Phi$, applied after replacing both $\mathcal{E}$ and $\mathcal{F}$ with $\mathscr{L}$.
\end{proof}
\end{lem}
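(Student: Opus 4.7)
The plan is to recognise the functor $\mathcal{F}\mapsto \HOM_{\PV}(\mathscr{C},i_{*}\mathcal{F})$ as a composition of two left-exact functors and then invoke Grothendieck's composite-functor spectral sequence. Following the excerpt, I set
\[
\Phi:=\HOM_{X}(\mathscr{L},-)\colon\Coh(X)\to\Coh(X),\qquad \Psi:=\HOM_{\PV}(i_{*}\odi{X},i_{*}(-))\colon\Coh(X)\to\Coh(\PV).
\]
Both are left exact. Since $i$ is a closed immersion, $i_{*}$ is exact on coherent sheaves, so the right-derived functors are $R^{q}\Phi=\EXT^{q}_{X}(\mathscr{L},-)$ and $R^{p}\Psi=\EXT^{p}_{\PV}(i_{*}\odi{X},i_{*}(-))$, producing exactly the $E_{2}$-page in the statement.

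Next I would identify the composition. Localising on an affine $\Spec B\subset\PV$ whose preimage $\Spec A\subset X$ corresponds to a surjection $B\twoheadrightarrow A$, and writing $M,N$ for the finitely generated $A$-modules representing $\mathscr{L}$ and $\mathcal{F}$, the map
\[
\Hom_{B}(M,N)\longrightarrow \Hom_{A}(A,\Hom_{A}(M,N)),\qquad u\mapsto\bigl(1_{A}\mapsto u\bigr),
\]
is a natural isomorphism, as a direct verification shows. Sheafifying, one obtains $\Psi\circ\Phi(\mathcal{F})\cong\HOM_{\PV}(\mathscr{C},i_{*}\mathcal{F})$ functorially in $\mathcal{F}$. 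Specialising to $\mathcal{F}=\mathscr{L}$ recovers $\HOM_{\PV}(\mathscr{C},\mathscr{C})$, and the exactness of $i_{*}$ lets the same identification descend to derived functors, so that $R^{p+q}(\Psi\circ\Phi)(\mathscr{L})\cong \EXT^{p+q}_{\PV}(\mathscr{C},i_{*}\mathscr{L})=\EXT^{p+q}_{\PV}(\mathscr{C},\mathscr{C})$, which is the required abutment.

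It then remains to apply Grothendieck's spectral sequence for the composition $\Psi\circ\Phi$ with input $\mathscr{L}$, and this requires verifying that $\Phi$ carries injective objects to $\Psi$-acyclic ones; this acyclicity check is the main technical obstacle. I would work in $\mathrm{QCoh}(X)$, where there are enough injectives and $i_{*}$ preserves injectivity along the closed immersion $i$, and show that $\HOM_{X}(\mathscr{L},I)$ remains injective in $\mathrm{QCoh}(X)$ whenever $I$ is. The naive argument via the adjunction $(-\otimes_{\odi{X}}\mathscr{L}^{\vee})\dashv\HOM_{X}(\mathscr{L},-)$ is not directly available, since by (\ref{2.smoothness}) the sheaf $\mathscr{L}$ fails to be locally free along $\Sing(X)$ and hence is not flat in general. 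The cleanest workaround is to rephrase the argument in the derived category: the two-step composite $R\HOM_{\PV}(i_{*}\odi{X},Ri_{*}R\HOM_{X}(\mathscr{L},\mathscr{L}))$ computes $R\HOM_{\PV}(\mathscr{C},\mathscr{C})$ thanks to the exactness of $i_{*}$ together with Grothendieck duality for the closed immersion, and the hypercohomology spectral sequence of this object delivers the convergence of the lemma.
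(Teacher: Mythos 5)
The first half of your proposal coincides with the paper's argument: you introduce the same pair of functors $\Psi$, $\Phi$ and verify the same local isomorphism $\Hom_{B}(M,N)\cong\Hom_{A}(A,\Hom_{A}(M,N))$, which identifies $\Psi\circ\Phi$ with $\HOM_{\PV}(i_{*}\mathscr{L},i_{*}(-))$, exactly as in (\ref{isodafaenzi}). The divergence comes at the point where the spectral sequence is actually extracted, and there your argument has a genuine gap. First, the identification $R^{p}\Psi=\EXT^{p}_{\PV}(i_{*}\odi{X},i_{*}(-))$ is not a consequence of the exactness of $i_{*}$: the derived functors of $\Psi$, as a functor on $\Coh(X)$, are computed from an injective resolution $I^{\bullet}$ of the argument on $X$, and $i_{*}I$ is in general not acyclic for $\HOM_{\PV}(i_{*}\odi{X},-)$ (locally $\Ext^{>0}_{B}(A,I)$ need not vanish for $I$ injective over $A=B/J$); note also that the very local computation behind (\ref{isodafaenzi}) gives $\HOM_{\PV}(i_{*}\odi{X},i_{*}\mathcal{G})\cong i_{*}\mathcal{G}$ for every $\mathcal{G}$, so the naive derived functors of $\Psi$ vanish in positive degrees, and obtaining the stated $E_{2}$-page requires precisely the extra input you are trying to supply. (Incidentally, $i_{*}$ does not preserve injective quasi-coherent sheaves along a closed immersion, so the first route you sketch would also need repair.)

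Second, and more importantly, the derived-category ``workaround'' is an assertion, not a proof, and it is the load-bearing step. Since $\Psi\circ\Phi\cong i_{*}\HOM_{X}(\mathscr{L},-)$, the two-step object $R\HOM_{\PV}(i_{*}\odi{X},Ri_{*}R\HOM_{X}(\mathscr{L},\mathscr{L}))$ is not formally the same thing as $R\HOM_{\PV}(\mathscr{C},\mathscr{C})$: the underived isomorphism (\ref{isodafaenzi}) does not upgrade to the derived level for free, and Grothendieck duality for the closed immersion reads $R\HOM_{\PV}(i_{*}\mathscr{L},\mathcal{G})\cong i_{*}R\HOM_{X}(\mathscr{L},i^{!}\mathcal{G})$, with $i^{!}$ --- not $i_{*}$ --- in the second slot, so it does not deliver the quasi-isomorphism you invoke. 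Thus the convergence to $\EXT^{p+q}_{\PV}(\mathscr{C},\mathscr{C})$, which is the whole content of the lemma and which the paper obtains by applying Grothendieck's composite-functor spectral sequence directly to $\Psi$ and $\Phi$ once (\ref{isodafaenzi}) is in place, is left unproved in your write-up: you correctly flag the acyclicity/convergence hypothesis as the technical crux, but the proposed substitute does not close it.
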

%
%%%%%%%%%%%%%%%%%%%%%%%%%%%%%%%%%%%%%%%%%%%%%%%%%%%%%%%%%%%%%%%%%%%%%%%%%%%%%%%%%%%%%%%%%%%%%%%%%%%
%%%%%%%%%%%%%%%%%%%%%%%%%%%%%%%%%%%%%%%%%%%%%%%%%%%%%%%%%%%%%%%%%%%%%%%%%%%%%%%%%%%%%%%%%%%%%%%%%%%
\section{An upper bound for \texorpdfstring{$\hh^{0}(X,\mathcal{N})$}{h0(X,N)}}
\label{2.upperbo}
The aim of this section is to provide an upper bound for the dimension of $\Hh^{0}(X,\mathcal{N})$. Since we have $\Hh^{0}(X,\mathcal{N})\cong \Hh^{0}(\PV,i_{*}\mathcal{N})$, we can make use of the isomorphism provided by Proposition \ref{2.inormale}.

By Lemma \ref{2.HomCC} and since
\[\HOM_{\PV}(\mathscr{C},\mathscr{C})\cong i_{*}\HOM_{X}(\mathscr{L},\mathscr{L}),\]
we have $\HOM_{\PV}(\mathscr{C},\mathscr{C}) \cong i_{*}\mathcal{O}_{X}$. If we apply $\HOM_{\PV}(-,\mathscr{C})$ to sequence (\ref{2.mappakercoker}), we get the following diagram:
%
%\begin{small}
\begin{equation}
\label{2.injection}
\xymatrix{
& & & 0 \ar[d] & \\
0 \ar[r] & i_{*}\mathcal{O}_{X} \ar[r] & \mathscr{C}^{m} \ar[r] \ar@{=}[d] & \HOM_{\PV}(\im(\varphi),\mathscr{C}) \ar[r] \ar[d] & i_{*}\mathcal{N} \ar[r] \ar[d] & 0 \\
0 \ar[r] & i_{*}\mathcal{O}_{X} \ar[r] & \mathscr{C}^{m} \ar[r]^-{\psi} & \Omega_{\PV}(2) \otimes \mathscr{C} \ar[r] & \mathscr{F} \ar[r]  & 0 
}
\end{equation}
%\end{small}
%
where $\mathscr{F}$ is defined as the cokernel of $\psi$ and $\mathscr{C}^{m}$ replaces $U^{*} \otimes \mathscr{C}$ for short. Via the snake lemma we deduce that the map $\mappa{i_{*}\mathcal{N}}{\mathscr{F}}$ is an injection, providing an upper bound
\begin{equation}
\label{2.inequality}
\hh^{0}(X,\mathcal{N}) \leq \hh^{0}(\PV,\mathscr{F}).
\end{equation}
By computing $\hh^{0}(\PV,\mathscr{F})$ and by Grothendieck's Theorem, we will have an upper bound for the dimension of $\mathcal{H}$.
\subsection{Cohomology computations}\rule{1pt}{0pt}
\label{cohocomputat}\smskip
The main tool to compute the cohomology groups of the second row of diagram (\ref{2.injection}) is the Koszul complex (\ref{2.koszul}). Making use of it, we provide the following lemmas.
\begin{lem}
\label{2.cohomOxt}
The cohomology groups of $\mathcal{O}_{Y}$ are of dimension
\begin{equation*}
\hh^{i}(Y,\mathcal{O}_{Y})=
\left\{
\begin{array}{ll}
1 & \mbox{if } i=0\\
\binom{\frac{n}{2}-1}{\frac{n}{2}-m} & \mbox{if } i=m-2, n \mbox{ even}, n \geq 2m\\
0 & \mbox{otherwise}
\end{array}
\right.
\end{equation*}
\begin{proof}
Recall that, for any pair of sheaves $E$ on $\PU$ and $F$ on $\PV$, the K\"unneth formula holds:
\[
\Hh^{i}(\mathcal{P},E \boxtimes F) \cong \bigoplus_{j=0}^{i}\Hh^{j}(\PU, E)\otimes \Hh^{i-j}(\PV,F).
\]
By means of this and Bott formula, we are able to 
compute the cohomology groups of the $r$-th term in the Koszul complex 
(\ref{2.koszul}). For $1 \leq r \leq n-1$ we get
%
%\begin{multline*}
\[
\hh^{i}(\mathcal{P},\mathcal{O}^{}_{\PU}(-r)\boxtimes \Omega^{n-r-1}_{\PV}(n-2r))
=\left\{
\begin{array}{ll}
\binom{\frac{n}{2}-1}{\frac{n}{2}-m} & \mbox{if } n \mbox{ even}, n\geq 2m, i=\frac{n}{2}+m-2, r=\frac{n}{2}\\
0 & \mbox{otherwise}
\end{array}
\right.
\]
%\end{multline*}
%
so there is at most one non-vanishing cohomology group. We have
\begin{align*}
%\begin{array}{rcl}
\Hh^{\frac{n}{2}+m-2}(\mathcal{P},\mathcal{O}^{}_{\PU}(-\frac{n}{2})\boxtimes \Omega^{\frac{n}{2}-1}_{\PV}) & \cong  \Hh^{\frac{n}{2}+m-2}(\mathcal{P},\ker(\epsilon_{\frac{n}{2}-1}))\\
& \cong  \Hh^{\frac{n}{2}+m-3}(\mathcal{P},\ker(\epsilon_{\frac{n}{2}-2})) \\
& \cong  \dotso \\
& \cong  \Hh^{m-1}(\mathcal{P},\ker(\epsilon_{0})),
%\end{array}
\end{align*}
whence the result, as soon as we consider the cohomology groups of the terms in the short exact sequence
\[
\xymatrix{
0 \ar[r] &
\ker(\epsilon_{0}) \ar[r] &
\mathcal{O}_{\mathcal{P}} \ar[r] &
\mathcal{O}_{Y} \ar[r] & 0.
} \qedhere
\]
\end{proof}
\end{lem}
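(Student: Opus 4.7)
The plan is to compute $H^\bullet(Y,\mathcal{O}_Y)$ via the Koszul resolution (\ref{2.koszul}) of $\mathcal{O}_Y$ on $\mathcal{P}$, in three steps: (i) compute the cohomology of each term $K_r:=\mathcal{O}_{\PU}(-r)\boxtimes \Omega^{n-r-1}_{\PV}(n-2r)$; (ii) split the resolution into short exact sequences and propagate cohomology through them; (iii) extract $H^\bullet(\mathcal{O}_Y)$.

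For step (i), K\"unneth gives
\[
H^i(\mathcal{P},K_r)=\bigoplus_{j}H^j(\PU,\mathcal{O}_{\PU}(-r))\otimes H^{i-j}(\PV,\Omega^{n-r-1}_{\PV}(n-2r)).
\]
By Bott's formula, $H^j(\PU,\mathcal{O}_{\PU}(-r))$ vanishes unless $r\geq m$ and $j=m-1$; likewise $H^k(\PV,\Omega^p_{\PV}(\ell))$ with $p=n-r-1$, $\ell=n-2r$ vanishes unless $\ell=0$ and $k=p$, forcing $r=n/2$ (hence $n$ even) and $k=n/2-1$. Combining these constraints, the only surviving cohomology among $\{K_r\}_{r\geq 1}$ is $H^{m+n/2-2}(\mathcal{P},K_{n/2})$ of dimension $\binom{n/2-1}{m-1}=\binom{n/2-1}{n/2-m}$, occurring precisely when $n$ is even and $n\geq 2m$. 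The term $K_0=\mathcal{O}_{\mathcal{P}}$ contributes only $H^0=\mathbf{k}$.

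For step (ii), let $Z_r:=\ker(\epsilon_r)\subseteq K_r$, so the resolution breaks into short exact sequences $0\to Z_{r+1}\to K_{r+1}\to Z_r\to 0$ for $r\geq 0$, together with the final $0\to Z_0\to \mathcal{O}_{\mathcal{P}}\to \mathcal{O}_Y\to 0$, and $Z_{n-1}=0$. Ascending inductively from $Z_{n-1}=0$, step (i) forces $Z_r$ to have trivial cohomology for all $r\geq n/2$. Descending from $K_{n/2}$, the resulting long exact sequences yield the chain
\[
H^{m+n/2-2}(K_{n/2})\cong H^{m+n/2-2}(Z_{n/2-1})\cong H^{m+n/2-3}(Z_{n/2-2})\cong\cdots\cong H^{m-1}(Z_0),
\]
each isomorphism being forced by the vanishing of adjacent cohomology of the cohomologically trivial $K_r$ for $1\leq r\leq n/2-1$. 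Step (iii) is then immediate: the final short exact sequence, combined with $H^i(\mathcal{O}_{\mathcal{P}})=0$ for $i=m-2,m-1$ (available since $m\geq 3$), gives $H^{m-2}(\mathcal{O}_Y)\cong H^{m-1}(Z_0)$, which in the even, $n\geq 2m$ case has the claimed dimension. For $n$ odd, or for $n$ even with $n<2m$, step (i) forces every $Z_r$ (in particular $Z_0$) to be cohomologically trivial, and the same sequence yields $H^0(\mathcal{O}_Y)=\mathbf{k}$ with all other cohomology groups vanishing.

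The main obstacle is the bookkeeping in step (ii): ensuring each connecting map in the chain is a two-sided isomorphism rather than only an injection or surjection amounts to checking the vanishing of the flanking cohomology of each intermediate $K_r$, all guaranteed by step (i). The two numerical checks worth verifying carefully are the degree drop $m+n/2-2-(n/2-1)=m-1$ and the threshold $n\geq 2m$ required for $r=n/2\geq m$ so that $H^{m-1}(\PU,\mathcal{O}_{\PU}(-n/2))$ is nonzero.
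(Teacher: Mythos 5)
Your proposal is correct and follows essentially the same route as the paper: Künneth plus Bott to see that the only nonzero cohomology among the Koszul terms is $H^{m+\frac{n}{2}-2}$ of the middle term $r=\frac{n}{2}$ (for $n$ even, $n\geq 2m$), then a chase through the split short exact sequences down to $H^{m-1}(\ker(\epsilon_0))$ and the final sequence $0\to\ker(\epsilon_0)\to\mathcal{O}_{\mathcal{P}}\to\mathcal{O}_Y\to 0$. Your bookkeeping (the degree drop to $m-1$, the threshold $r=\frac{n}{2}\geq m$, and the vanishing of $H^{m-2},H^{m-1}$ of $\mathcal{O}_{\mathcal{P}}$ since $m\geq 3$) matches the paper's argument, which leaves these checks implicit.
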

\begin{lem} \label{2.cohomCm}
The cohomology groups of $\mathcal{O}_{Y}^{m}(1,0)$ are of dimension
\begin{equation*}
{%\small
\hh^{i}(Y,\mathcal{O}_{Y}^{m}(1,0))=
\left\{
\begin{array}{ll}
m^{2} & \mbox{if } i=0
\\
m\binom{\frac{n}{2}-2}{\frac{n}{2}-m-1} & \mbox{if } i=m-2
, n \mbox{ even}, n\geq 2m+2\\
0 & \mbox{otherwise}
\end{array}
\right.}
\end{equation*}
\begin{proof}
The Koszul complex (\ref{2.koszul}) twisted by $\mathcal{O}_{\mathcal{P}}(1,0)$ is a locally free resolution of $\mathcal{O}_{Y}(1,0)$. Again by means of K\"unneth and Bott formulas, we can compute the cohomology of the $r$-th term, $1 \leq r \leq n-1$, in such resolution:
\[
\hh^{i}(\mathcal{P},\mathcal{O}^{}_{\PU}(1-r)\boxtimes \Omega^{n-r-1}_{\PV}(n-2r))=
\left\{
\begin{array}{ll}
\binom{\frac{n}{2}-2}{\frac{n}{2}-m-1} & \mbox{if } n \mbox{ even}, n\geq 2m+2, i=\frac{n}{2}+m-2, r=\frac{n}{2}\\
0 & \mbox{otherwise}
\end{array}
\right.
\]
%\begin{multline*}
%{\small
%\hh^{i}(\mathcal{P},\mathcal{O}^{}_{\PU}(1-r)\boxtimes \Omega^{n-r-1}_{\PV}(n-2r))=}\\
%{\small=\left\{
%\begin{array}{ll}
%\binom{\frac{n}{2}-2}{\frac{n}{2}-m-1} & \mbox{if } n \mbox{ even}, n\geq 2m+2, i=\frac{n}{2}+m-2, r=\frac{n}{2}\\
%0 & \mbox{otherwise}
%\end{array}
%\right.
%}
%\end{multline*}
As in the proof of the previous lemma, we obtain
\[
\Hh^{\frac{n}{2}+m-2}(\mathcal{P},\mathcal{O}^{}_{\PU}(1-\frac{n}{2})\boxtimes \Omega^{\frac{n}{2}-1}_{\PV}) \cong \Hh^{m-1}(\mathcal{P},\ker({\epsilon_{0}}')),
\]
where ${\epsilon_{0}}'$ is the map $\epsilon_{0}$ in the Koszul complex twisted by $\mathcal{O}_{\mathcal{P}}(1,0)$. The result follows by considering the cohomology groups of the short exact sequence
\[
\xymatrix{
0 \ar[r] &
\ker({\epsilon_{0}}') \ar[r] &
\mathcal{O}_{\mathcal{P}}(1,0) \ar[r] &
\mathcal{O}_{Y}(1,0) \ar[r] & 0.
}
\qedhere
\]
\end{proof}
\end{lem}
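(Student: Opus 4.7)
The plan is to imitate the proof of Lemma~\ref{2.cohomOxt} by twisting the Koszul complex (\ref{2.koszul}) by $\mathcal{O}_{\mathcal{P}}(1,0)$; the twisted complex remains exact and provides a locally free resolution of $\mathcal{O}_{Y}(1,0)$. Since $\mathcal{O}_{Y}^{m}(1,0)$ is just a direct sum of $m$ copies of $\mathcal{O}_{Y}(1,0)$, every cohomology dimension will be multiplied by $m$ at the end.

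The first step is a term-by-term analysis of $\mathcal{O}_{\PU}(1-r)\boxtimes \Omega^{n-r-1}_{\PV}(n-2r)$ for $1\leq r\leq n-1$, via K\"unneth together with Bott's formula. On the $\PV$-factor, Bott forces nonvanishing cohomology only in the middle case $n-2r=0$, i.e.\ $r=n/2$ (which requires $n$ even), contributing $\hh^{n/2-1}(\PV,\Omega^{n/2-1}_{\PV})=1$. On the $\PU$-factor, for $r=n/2$ the twist is $\mathcal{O}_{\PU}(1-n/2)$, whose $\hh^{m-1}$ is nonzero precisely when $n\geq 2m+2$ and equals $\binom{n/2-2}{m-1}=\binom{n/2-2}{n/2-m-1}$; no other combination $(r,j)$ survives. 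Hence the twisted Koszul complex has a unique nonzero cohomology contribution, concentrated in total degree $\tfrac{n}{2}+m-2$ and occurring only when $n$ is even with $n\geq 2m+2$.

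The second step is to push this to $\mathcal{O}_{Y}(1,0)$ by the staircase argument of Lemma~\ref{2.cohomOxt}: the vanishing of the cohomology of the other intermediate Koszul terms yields the chain of isomorphisms ending in $\Hh^{m-1}(\mathcal{P},\ker({\epsilon_{0}}'))$. Combined with $\hh^{j}(\mathcal{P},\mathcal{O}_{\mathcal{P}}(1,0))=m\,\delta_{j,0}$ (K\"unneth) and the short exact sequence
\[
0 \to \ker({\epsilon_{0}}') \to \mathcal{O}_{\mathcal{P}}(1,0) \to \mathcal{O}_{Y}(1,0) \to 0,
\]
this gives $\hh^{0}(Y,\mathcal{O}_{Y}(1,0))=m$ in all cases, and $\hh^{m-2}(Y,\mathcal{O}_{Y}(1,0))=\binom{n/2-2}{n/2-m-1}$ in the even, large case, with every other cohomology group vanishing. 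Multiplying by $m$ produces the claimed dimensions $m^{2}$ and $m\binom{n/2-2}{n/2-m-1}$. The main obstacle is purely bookkeeping: verifying the Bott vanishings case by case and ensuring that no spurious intermediate contributions appear along the staircase; both checks are routine precisely because the intermediate Koszul terms all have uniformly vanishing cohomology.
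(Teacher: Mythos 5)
Your proposal is correct and follows essentially the same route as the paper: twisting the Koszul complex by $\mathcal{O}_{\mathcal{P}}(1,0)$, isolating the unique nonvanishing term at $r=\tfrac{n}{2}$ via K\"unneth and Bott, running the staircase of isomorphisms down to $\Hh^{m-1}(\mathcal{P},\ker({\epsilon_{0}}'))$, and concluding from the short exact sequence involving $\mathcal{O}_{\mathcal{P}}(1,0)$ and $\mathcal{O}_{Y}(1,0)$. The Bott bookkeeping and the final multiplication by $m$ for the direct sum are exactly as in the paper's argument.
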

\begin{lem}
\label{2.cohomOC}
The cohomology groups of $q^{*}\Omega_{\PV}(2) \otimes \mathcal{O}_{Y}(1,0)$ have dimension
%
%\begin{multline*}
\[
\hh^{i}(q^{*}\Omega_{\PV}(2) \otimes \mathcal{O}_{Y}(1,0))
=\left\{
\begin{array}{ll}
m\binom{n}{2}-1 & \mbox{if } i=0, m > 3\\
\binom{\frac{n}{2}-1}{\frac{n}{2}-m} & \mbox{if } i=m-3, n \mbox{ even}, n \geq 2m > 6\\
n\binom{\frac{n-3}{2}}{\frac{n-1}{2}-m} & \mbox{if } i=m-3, n \mbox{ odd}, n \geq 2m > 6\\
\frac{1}{8}n(13n-18) & \mbox{if } i=0, n \mbox{ even}, m=3\\
\rule{0pt}{10pt}\frac{1}{8}(n-1)(n^{2}+5n+8) & \mbox{if } i=0, n \mbox{ odd}, m=3\\
0 & \mbox{otherwise}
\end{array}
\right.
\]
%\end{multline*}
%
\begin{proof}
The Koszul complex (\ref{2.koszul}) twisted by $\mathcal{O}_{\PU}(1)\boxtimes \Omega_{\PV}(2)$ is a locally free resolution of the vector bundle ${q^{*}\Omega_{\PV}(2) \otimes \mathcal{O}_{Y}(1,0)}$; let us denote by $\delta_{r}$ its differentials. If
\[\mathcal{G}_{r}:=\Omega_{\PV}^{n-r-1}(n-2r) \otimes \Omega^{}_{\PV}(2),\]
its $r$-th term is ${\mathcal{O}_{\PU}(1-r)} \boxtimes \mathcal{G}_{r}$.

To compute the cohomology groups of $\mathcal{G}_{r}$, we consider the twisted Euler sequence (\ref{2.eulertwisted}), tensored by $\Omega_{\PV}^{n-r-1}(n-2r)$:
\begin{equation}
\label{2.eulerotwistataa}
\xymatrix{
0 \ar[r] &
\mathcal{G}_{r} \ar[r] &
V \otimes \Omega_{\PV}^{n-r-1}(n-2r+1) \ar[r] & 
\Omega_{\PV}^{n-r-1}(n-2r+2) \ar[r] &
0.
}
\end{equation}
For any $1 < r < n-1$, by Bott formula we have
\[
\hh^{i}(\PV,V \otimes \Omega_{\PV}^{n-r-1}(n-2r+1)) = \left\{
\begin{array}{ll}
n & \mbox{if } i=r-2, 2r=n+1\\
0 & \mbox{otherwise}
\end{array}
\right.
\]
\[
\hh^{i}(\PV,\Omega_{\PV}^{n-r-1}(n-2r+2)) = \left\{
\begin{array}{ll}
\binom{n}{2} & \mbox{if } i=0, r=2, n \geq 3\\
1 & \mbox{if } i=r-3>0, 2r=n+2\\
0 & \mbox{otherwise}
\end{array}
\right.
\]
From the long exact sequence induced by (\ref{2.eulerotwistataa}), we get, for any $1 < r < n-1$,
\begin{equation*}
\hh^{i}\left(\PV,\mathcal{G}_{r} \right)=\left\{
\begin{array}{ll}
\binom{n}{2} & \mbox{if } i=1, r=2\\
n & \mbox{if } i=r-2 \geq 1, 2r=n+1\\
1 & \mbox{if } i=r-2 \geq 1, 2r=n+2\\
0 & \mbox{otherwise}
\end{array}
\right.
\end{equation*}
The cohomology groups of $\mathcal{G}_{0}$ and $\mathcal{G}_{n-1}$ can be computed directly via Bott formula. When $r=1$, one has $\mathcal{G}_{1}\cong \END(\mathcal{T}_{\PV})$, for which the only non-vanishing group is $\Hh^{0}(\PV,\END(\mathcal{T}_{\PV}))\cong \mathbf{k}$.

Again by K\"unneth formula, we get
\[
\hh^{i}(\mathcal{P},{\mathcal{O}_{\PU}(1-r)} \boxtimes \mathcal{G}_{r})\!=\!
\left\{
\begin{array}{ll}
\binom{\frac{n-2}{2}}{\frac{n}{2}-m}& \!\mbox{if }
%\left\{
%\begin{array}{l}
r=\frac{n+2}{2},
i=m+\frac{n-4}{2},
m \leq \frac{n}{2}
%\end{array}
%\right.
\\
n\binom{\frac{n-3}{2}}{\frac{n-1}{2}-m}& \!\mbox{if }
%\left\{
%\begin{array}{l}
r=\frac{n+1}{2},
i=m+\frac{n-5}{2},
m \leq \frac{n-1}{2}
%\end{array}
%\right.
\\
1 & \!\mbox{if } r=1, i=0\\
m\binom{n}{2} &\!\mbox{if } r=0,i=0\\
0 & \!\mbox{otherwise}
\end{array}
\right.
\]
Let $\pari(n)$ be the parity of $n$, i.e.~$\pari(n)=1$ if $n$ is odd and $0$ otherwise. Fix $\bar{r}:=\frac{n+2-\pari(n)}{2}$. Since $\mathcal{G}_{r}$ has zero cohomology for $r \notin \{0,1,\bar{r}\}$, we have
\begin{align*}
%\begin{array}{rcl}
\Hh^{m+\frac{n-4-\pari(n)}{2}}(\mathcal{P},{\mathcal{O}_{\PU}(1-\bar{r})} \boxtimes \mathcal{G}_{\bar{r}}) & \cong \Hh^{m+\frac{n-4-\pari(n)}{2}}(\mathcal{P},\ker(\delta_{\bar{r}-1})) \\
& \cong \Hh^{m+\frac{n-4-\pari(n)}{2}-1}(\mathcal{P},\ker(\delta_{\bar{r}-2})) \\
& \cong \dotso \\
& \cong \Hh^{m-1}(\mathcal{P},\ker(\delta_{1})).
%\end{array}
\end{align*}

The next step gives us
\[
\hh^{i}(\mathcal{P},\ker(\delta_{0}))=\left\{
\begin{array}{ll}
1 & \mbox{if } i=0 \\
\binom{\frac{n-2}{2}}{\frac{n}{2}-m} & \mbox{if } i=m-2, n \mbox{ even},n \geq 2m\\
n\binom{\frac{n-3}{2}}{\frac{n-1}{2}-m} & \mbox{if } i=m-2, n \mbox{ odd}, n \geq 2m\\
0 & \mbox{ otherwise}
\end{array}
\right.
\]
whence the result, which follows by taking into account the short exact sequence
\[
\xymatrix{
0 \ar[r] &
\ker(\delta_{0}) \ar[r] &
\mathcal{G}_{0} \ar[r] &
q^{*}\Omega_{\PV}(2) \otimes \mathcal{O}_{Y}(1,0) \ar[r] & 0
.}\qedhere
\]
\end{proof}
\end{lem}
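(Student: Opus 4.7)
The plan is to mimic the method used in the proofs of Lemmas~\ref{2.cohomOxt} and \ref{2.cohomCm}: twist the Koszul complex (\ref{2.koszul}) by $\mathcal{O}_{\PU}(1) \boxtimes \Omega_{\PV}(2)$ to obtain a locally free resolution of $q^{*}\Omega_{\PV}(2) \otimes \mathcal{O}_{Y}(1,0)$, compute the cohomology of the individual terms via K\"unneth and Bott, and then transfer the information from the resolution to the resolved sheaf by chasing the syzygy short exact sequences.

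First I would set $\mathcal{G}_{r}:=\Omega^{n-r-1}_{\PV}(n-2r)\otimes\Omega_{\PV}(2)$, so that the $r$-th term of the twisted complex is $\mathcal{O}_{\PU}(1-r)\boxtimes \mathcal{G}_{r}$. The K\"unneth formula reduces everything to $\Hh^{\bullet}(\PU,\mathcal{O}_{\PU}(1-r))$ (which by Bott is $\mathbf{k}^{m}$ for $r=0$, $\mathbf{k}$ for $r=1$, trivial for $2\le r\le m-1$, and concentrated in top degree $m-1$ for $r\ge m$) and $\Hh^{\bullet}(\PV,\mathcal{G}_{r})$. For the latter I would tensor the twisted dual Euler sequence (\ref{2.eulertwisted}) with $\Omega^{n-r-1}_{\PV}(n-2r)$, getting
\[
\xymatrix{
0 \ar[r] & \mathcal{G}_{r} \ar[r] & V\otimes \Omega^{n-r-1}_{\PV}(n-2r+1) \ar[r] & \Omega^{n-r-1}_{\PV}(n-2r+2) \ar[r] & 0,
}
\]
and apply Bott to the two outer terms. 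The middle term is nonzero only when $2r=n+1$ (contributing in degree $r-2$), while the right term is nonzero when $r=2$ (contributing $\binom{n}{2}$ in degree $0$) or when $2r=n+2$ (contributing in degree $r-3$). The long exact sequence then pins down $\Hh^{\bullet}(\PV,\mathcal{G}_{r})$, and one has to note separately the easy cases $r=0$ (trivial), $r=1$ (where $\mathcal{G}_{1}\cong\END(\mathcal{T}_{\PV})$ has cohomology $\mathbf{k}$ only in degree zero), and $r=n-1$.

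Combining with the $\Hh^{\bullet}(\PU,\mathcal{O}_{\PU}(1-r))$ factor via K\"unneth, only three ranges of $r$ can contribute to the cohomology of the $r$-th term of the resolution: the two low values $r=0,1$ (feeding the $i=0$ cohomology of the resolved sheaf via the $m\binom{n}{2}$ and the $1$) and a single middle value $\bar r=\frac{n+2-\pari(n)}{2}$, which combines the "$2r=n+1$" or "$2r=n+2$" contribution on $\PV$ with the top cohomology on $\PU$, provided $m\le \frac{n-\pari(n)}{2}$. At this point I would split the twisted Koszul complex into short exact sequences $0\to\ker(\delta_{r-1})\to\mathcal{O}_{\PU}(1-r)\boxtimes\mathcal{G}_{r}\to\ker(\delta_{r-2})\to 0$ and walk the top cohomology down, exactly as in the proofs of the previous two lemmas, obtaining a chain of isomorphisms
\[
\Hh^{m+\frac{n-4-\pari(n)}{2}}\bigl(\mathcal{P},\mathcal{O}_{\PU}(1-\bar r)\boxtimes \mathcal{G}_{\bar r}\bigr) \cong \dotsb \cong \Hh^{m-1}(\mathcal{P},\ker(\delta_{1})),
\]
and then analyze $\ker(\delta_{0})$ via the short exact sequences $0\to\ker(\delta_{1})\to\mathcal{O}_{\PU}\boxtimes\mathcal{G}_{1}\to\ker(\delta_{0})\to 0$ and $0\to\ker(\delta_{0})\to\mathcal{G}_{0}\to q^{*}\Omega_{\PV}(2)\otimes \mathcal{O}_{Y}(1,0)\to 0$. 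This yields both the top cohomology (in degree $m-3$, with the stated binomial coefficients in the two parity cases) and the $\Hh^{0}$, which for $m>3$ is simply $\hh^{0}(\PV,\Omega_{\PV}(2))-1 = m\binom{n}{2}-1$ (the $-1$ coming from the $r=1$ term).

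The main obstacle is the case $m=3$: here the chain of isomorphisms shortens and the middle-range contribution collides with the low-range contribution in the same total degree, so $\hh^{0}$ is no longer $m\binom{n}{2}-1$. I expect to handle this by writing the relevant short exact sequences explicitly for $m=3$ and carefully computing the Euler characteristic, distinguishing $n$ even and $n$ odd, to recover $\tfrac18 n(13n-18)$ and $\tfrac18(n-1)(n^{2}+5n+8)$ respectively. Everywhere else, the higher cohomology vanishes because the three potentially nonzero contributions land in pairwise distinct total degrees on $\mathcal{P}$.
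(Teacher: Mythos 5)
Your proposal follows essentially the same route as the paper's proof: the same twisted Koszul resolution with terms $\mathcal{O}_{\PU}(1-r)\boxtimes\mathcal{G}_{r}$, the same use of the twisted Euler sequence plus Bott and K\"unneth to isolate the contributions at $r=0,1$ and at the single middle value $\bar r$, the same syzygy chase down to $\ker(\delta_{1})$ and $\ker(\delta_{0})$, and the same observation that for $m=3$ the middle contribution lands in $\Hh^{1}(\ker(\delta_{0}))$ and inflates $\hh^{0}$ of the resolved sheaf. The argument is correct as outlined, so no further comparison is needed.
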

\begin{rmk}
\label{2.usefulcomputations}
The previous lemmas are enough to compute the cohomology groups of the sheaves appearing in the second row of (\ref{2.injection}). Indeed, the direct images via the morphism $q$ of $\mathcal{O}_{Y}$, $\mathcal{O}_{Y}^{m}(1,0)$, and ${q^{*}\Omega_{\PV}(2)\otimes \mathcal{O}_{Y}(1,0)}$ are respectively $\mathcal{O}_{X}$, $\mathscr{C}^{m}$, and $\mathscr{C} \otimes \Omega_{\PV}(2)$.
\smskip As soon as the higher direct images $\R^{i>0}q_{*}(-)$ are zero, one can apply \cite[Exercise III.4.1]{Hartshorne} and get the desired cohomology groups. To show these vanishings, we argue as follows.
\smskip The hypercohomology spectral sequence of the functor $q_{*}$ applied to the Koszul complex (\ref{2.koszul}) degenerates into an Eagon-Northcott complex, which is a locally free resolution of $\mathcal{O}_{Y}$ (cfr.~\cite[\textsection 2]{GrusonPeskine}); this implies $\R^{0}q_{*}(\mathcal{O}_{Y})\cong\mathcal{O}_{X}$ and the vanishing of $\R^{i>0}q_{*}(\mathcal{O}_{Y})$. The same procedure applied to the Koszul complex twisted by $\mathcal{O}_{\mathcal{P}}(1,0)$ gives rise to a Buchsbaum-Rim complex, which turns out to be a locally free resolution of $\mathscr{C}$ (cfr.~again \cite[\textsection 2]{GrusonPeskine}). As before, this implies $\R^{0}q_{*}(\mathcal{O}_{Y}(1,0))\cong\mathscr{C}$ and the vanishing of $\R^{i>0}q_{*}(\mathcal{O}_{Y}(1,0))$. The third set of vanishings follows from this last argument and the projection formula.
\end{rmk}
We are ready to compute the dimension of $\Hh^{0}(\PV,\mathscr{F})$. Since we want to show that $\rho$ is birational, we compare $\hh^{0}(\PV,\mathscr{F})$ to the dimension of $\GRA$.\begin{propo}
\label{2.ugualiodiversi} \rule{1pt}{0pt}
\begin{enumerate}[label=\textup{\roman{*}.}, ref=(\roman{*})]
\item For any $m > 3$ we have $\hh^{0}(\PV,\mathscr{F})=\dim \GRA$.
\item For $m=3$ and $n \geq 5$, we have%
\begin{equation*}
\hh^{0}(\PV,\mathscr{F}) - \dim \gra(3,\Lambda^{2}V)=
\left\{
\begin{array}{ll}
\frac{3}{8}(n-4)(n-6) & \mbox{if } n \geq 6, n \mbox{ even}\\
\frac{1}{8}n(n-3)(n-5) & \mbox{if } n \geq 5, n \mbox{ odd}
\end{array}
\right.
\end{equation*}
and, in particular, $\hh^{0}(\PV,\mathscr{F})=\dim \gra(3,\Lambda^{2}V)$ if $n=5$ or $n=6$.
\end{enumerate}
\begin{proof}
We can compute $\hh^{0}(\PV,\mathscr{F})$ from the second row of diagram (\ref{2.injection}); the cohomology groups are given by Lemmas \ref{2.cohomOxt}, \ref{2.cohomCm} and \ref{2.cohomOC} (cfr.~Remark \ref{2.usefulcomputations}). This computation proves the statement in all cases but $n \geq 8$, $m = 4$ and $n$ even. For the remaining cases the argument is the following: by the forthcoming Lemma \ref{2.elostesso}, if $n > 2m-3$ we have $\hh^{0}(\PV,\mathscr{F})=\hh^{0}(X,\mathcal{N})$; so to conclude it is sufficient to prove the equality $\hh^{0}(X,\mathcal{N})=\dim \gra(m,\Lambda^{2}V)$ for $m=4$, $n$ even and $n \geq 8$, but this has been done in \cite[Theorem 1]{FaenziFania}.
\end{proof}
\end{propo}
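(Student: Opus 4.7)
The plan is to compute $h^{0}(\PV,\mathscr{F})$ directly from the four-term exact sequence in the second row of diagram (\ref{2.injection}) and then subtract the Grassmannian dimension $\dim \GRA = m\bigl(\binom{n}{2}-m\bigr)$. First I would split
\[
\xymatrix@C=12pt{0 \ar[r] & i_{*}\mathcal{O}_{X} \ar[r] & \mathscr{C}^{m} \ar[r]^-{\psi} & \Omega_{\PV}(2)\otimes \mathscr{C} \ar[r] & \mathscr{F} \ar[r] & 0}
\]
into the two short exact sequences $0 \to i_{*}\mathcal{O}_{X} \to \mathscr{C}^{m} \to \im(\psi) \to 0$ and $0 \to \im(\psi) \to \Omega_{\PV}(2)\otimes \mathscr{C} \to \mathscr{F} \to 0$, and chase cohomology. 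By Remark \ref{2.usefulcomputations}, the higher direct images under $q$ of $\mathcal{O}_{Y}$, $\mathcal{O}_{Y}^{m}(1,0)$ and $q^{*}\Omega_{\PV}(2)\otimes \mathcal{O}_{Y}(1,0)$ all vanish, so Lemmas \ref{2.cohomOxt}, \ref{2.cohomCm} and \ref{2.cohomOC} supply the cohomology of $i_{*}\mathcal{O}_{X}$, $\mathscr{C}^{m}$, $\Omega_{\PV}(2)\otimes \mathscr{C}$ on $\PV$ itself.

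In the generic range (that is, whenever all the ``exceptional'' $h^{m-2}$ and $h^{m-3}$ contributions from the three Lemmas vanish), the long exact sequences collapse and one reads off directly
\[
h^{0}(\PV,\mathscr{F}) \;=\; h^{0}(\Omega_{\PV}(2)\otimes \mathscr{C}) - h^{0}(\mathscr{C}^{m}) + h^{0}(i_{*}\mathcal{O}_{X}) \;=\; \bigl(m\tbinom{n}{2}-1\bigr) - m^{2} + 1 \;=\; m\bigl(\tbinom{n}{2}-m\bigr),
\]
which gives part i.\ as well as the ``generic'' contribution in part ii. The remaining cases ($m=3$, or $m$ arbitrary with $n$ even and $n\geq 2m$, or $n$ odd with $n\geq 2m+1$ and $m\leq \frac{n-1}{2}$) involve precisely one or two extra nonzero cohomology groups, whose dimensions are the binomial coefficients appearing in the three Lemmas; I would carry out the Euler-characteristic bookkeeping in each branch, verifying cancellations and collecting the surviving terms into the claimed discrepancies $\tfrac{3}{8}(n-4)(n-6)$ for $m=3$, $n$ even and $\tfrac{1}{8}n(n-3)(n-5)$ for $m=3$, $n$ odd. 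The special small cases $(m,n)=(3,5),(3,6)$ must match: this is a direct numerical check once the formulas are written out.

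The only case not directly settled by this cohomological bookkeeping is $m=4$ with $n$ even and $n\geq 8$: here the combinatorial contributions from Lemmas \ref{2.cohomOxt}--\ref{2.cohomOC} do not obviously telescope to zero because several $h^{m-2}$ and $h^{m-3}$ groups are simultaneously nonzero. For these values I would invoke the forthcoming Lemma \ref{2.elostesso}, which under $n>2m-3$ gives $h^{0}(\PV,\mathscr{F})=h^{0}(X,\mathcal{N})$, and then appeal to \cite[Theorem 1]{FaenziFania}, where the equality $h^{0}(X,\mathcal{N})=\dim \gra(m,\Lambda^{2}V)$ has already been established in this range. The main obstacle is purely combinatorial: keeping track of which exceptional summands contribute in which $(m,n)$-regime, and verifying that no sequence-boundary map cancels a group one wants to count; everything else is a mechanical long-exact-sequence computation fed by the three previous Lemmas.
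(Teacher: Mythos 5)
Your proposal follows the paper's own argument essentially verbatim: compute $\hh^{0}(\PV,\mathscr{F})$ from the second row of diagram (\ref{2.injection}) via the cohomology groups supplied by Lemmas \ref{2.cohomOxt}, \ref{2.cohomCm} and \ref{2.cohomOC} (through Remark \ref{2.usefulcomputations}), and treat the one genuinely problematic case $m=4$, $n$ even, $n\geq 8$ by combining Lemma \ref{2.elostesso} with \cite[Theorem 1]{FaenziFania}. The bookkeeping you outline (including the cancellation of the connecting-map contributions for $m=3$) works out to the stated formulas, so this is correct and the same route as the paper.
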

\begin{lem}
\label{2.elostesso}
If $X$ is smooth, then $\hh^{k}(\PV,\mathscr{F})=\hh^{k} (X,\mathcal{N})$ for any $k$.
\begin{proof}
If $X$ is smooth, the sheaves $\mathscr{K}$ and $\mathscr{C}$, defined in (\ref{2.mappakercoker}), are vector bundles on $X$. By \cite[Exercise VI.1(6)]{GolubitskyGuillemin}, we have $\mathcal{N} \cong (\left.\mathscr{K}\right|_{X})^{*} \otimes \mathscr{L}$. Applying the functor $\HOM_{X}(-,\mathscr{L})$ to the sequence (\ref{2.mappakercoker}) restricted to $X$, since $\HOM_{X}(\mathscr{L},\mathscr{L})=\mathcal{O}_{X}$ (Lemma \ref{2.HomCC}) and $\EXT^{1}_{X}(\mathscr{L},\mathscr{L})=0$ ($\mathscr{L}$ is a line bundle), one has
\[
\xymatrix{
0 \ar[r] & \mathcal{O}_{X} \ar[r] & \mathscr{L}^{m} \ar[r] & \HOM_{X}(\left.\im(\varphi)\right|_{X},\mathscr{L}) \ar[r] & 0;
}
\]
as $\EXT^{1}_{X}(\left.\im(\varphi)\right|_{X},\mathscr{L})\cong\EXT^{2}_{X}(\mathscr{L},\mathscr{L})=0$, one also has
\[
\xymatrix{
0 \ar[r] &
\HOM_{X}(\left.\im(\varphi)\right|_{X},\mathscr{L}) \ar[r] &
\mathscr{L} \otimes \left.\Omega_{\PV}(2)\right|_{X} \ar[r] &
\mathcal{N} \ar[r] & 0.
}
\]
As these two sequences fit together to the restriction to $X$ of the second row of diagram (\ref{2.injection}), the conclusion follows.
\end{proof}
\end{lem}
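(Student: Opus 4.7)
The plan is to show that, under the smoothness hypothesis, the sheaf $\mathscr{F}$ appearing in diagram (\ref{2.injection}) is canonically isomorphic to $i_{*}\mathcal{N}$; since $H^{k}(\PV,i_{*}\mathcal{N})=H^{k}(X,\mathcal{N})$, the conclusion then follows immediately. The strategy is to reconstruct the second row of (\ref{2.injection}), restricted to $X$, by applying $\HOM_{X}(-,\mathscr{L})$ to (\ref{2.mappakercoker}) and to invoke the standard identification of the normal bundle of a smooth degeneracy locus.

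First I would exploit the assumption that $X$ is smooth, which by (\ref{2.smoothness}) means $n>2m-3$, so $\mathscr{L}$ is an honest line bundle on $X$, $\mathscr{K}$ is a vector bundle on $X$ of the expected rank $n-m$, and the image $\im(\varphi)$ is locally free in a neighborhood of $X$. I would use the classical identification $\mathcal{N}\cong(\mathscr{K}|_{X})^{*}\otimes\mathscr{L}$ (from \cite[Exercise VI.1(6)]{GolubitskyGuillemin}), which is the natural candidate for the right end of the sequence I want to produce.

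Next I would split (\ref{2.mappakercoker}), restricted to $X$, into the two short exact sequences
\[
0\to\mathscr{K}|_{X}\to\mathcal{T}_{\PV}(-2)|_{X}\to\im(\varphi)|_{X}\to 0,\qquad 0\to\im(\varphi)|_{X}\to U\otimes\mathcal{O}_{X}\to\mathscr{L}\to 0.
\]
Applying $\HOM_{X}(-,\mathscr{L})$ to the second sequence, and using $\HOM_{X}(\mathscr{L},\mathscr{L})\cong\mathcal{O}_{X}$ (Lemma \ref{2.HomCC}) together with $\EXT^{1}_{X}(\mathscr{L},\mathscr{L})=0$ (since $\mathscr{L}$ is a line bundle), yields a short exact sequence whose cokernel is $\HOM_{X}(\im(\varphi)|_{X},\mathscr{L})$. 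Applying $\HOM_{X}(-,\mathscr{L})$ to the first sequence, and noting that $\EXT^{1}_{X}(\im(\varphi)|_{X},\mathscr{L})$ maps into $\EXT^{2}_{X}(\mathscr{L},\mathscr{L})=0$, produces a second short exact sequence ending in $(\mathscr{K}|_{X})^{*}\otimes\mathscr{L}\cong\mathcal{N}$. Splicing these two short exact sequences together reconstructs exactly the second row of (\ref{2.injection}), restricted to $X$, with $\mathscr{F}$ replaced by $\mathcal{N}$.

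Finally, I would verify that the second row of (\ref{2.injection}) itself is already the pushforward of its restriction: since $\mathscr{C}=i_{*}\mathscr{L}$, the projection formula gives $\mathscr{C}^{m}\cong i_{*}(\mathscr{L}^{m})$ and $\Omega_{\PV}(2)\otimes\mathscr{C}\cong i_{*}(\Omega_{\PV}(2)|_{X}\otimes\mathscr{L})$, so that $\mathscr{F}\cong i_{*}\mathcal{N}$ canonically, and the equality of cohomology dimensions is automatic. The main potential obstacle is ensuring the exactness of the sequences after restriction to $X$, which is what the hypothesis $n>2m-3$ is really for: it guarantees that all the sheaves involved have constant rank on $X$ so that no spurious $\Tor$ contributions arise; the rest is bookkeeping with standard $\HOM$/$\EXT$ long exact sequences.
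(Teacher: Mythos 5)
Your proposal is correct and follows essentially the same route as the paper's proof: the identification $\mathcal{N}\cong(\left.\mathscr{K}\right|_{X})^{*}\otimes\mathscr{L}$, the application of $\HOM_{X}(-,\mathscr{L})$ to the two halves of (\ref{2.mappakercoker}) restricted to $X$ using Lemma \ref{2.HomCC} and the vanishing of $\EXT^{1}_{X}(\mathscr{L},\mathscr{L})$ and $\EXT^{2}_{X}(\mathscr{L},\mathscr{L})$, and the splicing that identifies the result with the second row of (\ref{2.injection}). Your closing remark about pushing forward via $i_{*}$ just makes explicit what the paper leaves as the final sentence of its proof.
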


This lemma shows that, even though $\hh^{0}(\PV,\mathscr{F})$ provides only an upper bound for $\hh^{0} (X,\mathcal{N})$ (inequality (\ref{2.inequality})), when $X$ is smooth the link between $\mathscr{F}$ and $\mathcal{N}$ is deeper.
\begin{rmk}
As pointed out in Remark \ref{2.usefulcomputations}, the direct image via $q$ of the Koszul complex (\ref{2.koszul}) (respectively, twisted by $\mathcal{O}_{P}(1,0)$) degenerates into a locally free resolution of $\mathcal{O}_{X}$ (respectively, of $\mathscr{C}$). Instead of computing cohomologies on $\mathcal{P}$ as in Lemmas \ref{2.cohomOxt}, \ref{2.cohomCm}, \ref{2.cohomOC}, we could have worked directly on the Eagon-Northcott or the Buchsbaum-Rim complexes on $\PV$.
\end{rmk}
%%%%%%%%%%%%%%%%%%%%%%%%%%%%%%%%%%%%%%%%%%%%%%%%%%%%%%%%%%%%%%%%%%%%%%%%%%%%%%%%%%%%%%%%%%%%%%%%%%%
%%%%%%%%%%%%%%%%%%%%%%%%%%%%%%%%%%%%%%%%%%%%%%%%%%%%%%%%%%%%%%%%%%%%%%%%%%%%%%%%%%%%%%%%%%%%%%%%%%%
\section{Injectivity and birationality of \texorpdfstring{$\rho$}{rho}}
\label{2.injandbir}
The purpose of this section is to prove the general injectivity and the birationality of $\rho$, which are the main results of this paper.
\begin{thm}
\label{2.geninj}
The map $\rho:
\GRA \dashrightarrow \mathcal{H}$
is injective on its domain of definition for all $(m,n)$ such that $3 \leq m < n-1$, with the unique exception ${(m,n)=(3,6)}$.
\end{thm}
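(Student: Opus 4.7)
The plan is to prove generic injectivity by reconstructing, from a general degeneracy locus $X = X_\varphi$, the $m$-dimensional subspace $[\varphi] \in \GRA$. Up to the residual automorphism group $\mathrm{Aut}(\mathcal{T}_{\PV}(-2)) \times \mathrm{Aut}(U \otimes \mathcal{O}_{\PV}) = \mathbf{k}^{*} \times \GL(U)$, which is exactly the ambiguity collapsed in passing from $\Hom(\mathcal{T}_{\PV}(-2), U \otimes \mathcal{O}_{\PV})$ to $\GRA$ (cf.~Sect.~\ref{2.hilbschandgr}), I would recover $\varphi$ in three steps: first the cokernel sheaf $\mathscr{C}$ (equivalently $\mathscr{L}$), then the vector space $U$ as canonical cohomological data, and finally $\varphi$ itself via the resulting canonical surjection $U \otimes \mathcal{O}_{\PV} \twoheadrightarrow \mathscr{C}$.

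Step 1 is the heart of the argument. Formula (\ref{2.canonicoX}) gives $\omega_X \otimes \mathcal{O}_X(2) \cong S^{n-m-1}\mathscr{L}$, so on the smooth locus of $X$ the sheaf $\mathscr{L}$ is an $(n-m-1)$-th root of an intrinsic line bundle on $X$. To show this root is unique for general $X$, I would pin down $\mathscr{L}$ cohomologically (for instance via its number of global sections being $m$) and invoke the geometric description of Sect.~\ref{2.geometricint}, where $X$ is birationally identified with either a blow-up of $\PU$ along the Pfaffian subscheme (if $n$ is odd) or a Pfaffian scroll (if $n$ is even), to check that $\Pic$ admits no spurious competing roots. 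The exception $(m,n)=(3,6)$ appears precisely here: with $n-m-1=2$ the Palatini scroll carries nontrivial $2$-torsion in $\Pic$, giving four distinct preimages and making $\rho$ generically $4:1$.

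In Steps 2 and 3, once $\mathscr{C}$ is canonical one applies $\Hom_{\PV}(\mathcal{O}_{\PV},-)$ to (\ref{2.mappakercoker}). Using the vanishings $\Hh^{i}(\mathcal{T}_{\PV}(-2))=0$ for $i=0,1$ from the twisted Euler sequence, together with the vanishings $\Hh^{i}(\mathscr{K}) = 0$ for $i=1,2$ obtainable from the Kempf--Lascoux--Weyman machinery of Sect.~\ref{cohocomputat} (applying the direct image of the Koszul complex (\ref{2.koszul}), suitably twisted, to $\mathscr{K}$), the natural map $U \to \Hh^{0}(\PV,\mathscr{C})$ becomes an isomorphism, identifying $U$ canonically. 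The surjection $U \otimes \mathcal{O}_{\PV} \twoheadrightarrow \mathscr{C}$ is then its evaluation map, and lifting the kernel $\im(\varphi)$ back to $\varphi: \mappa{\mathcal{T}_{\PV}(-2)}{U \otimes \mathcal{O}_{\PV}}$ is determined up to the announced $\mathbf{k}^{*} \times \GL(U)$, i.e.~to a unique point of $\GRA$.

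The main obstacle I foresee is the root uniqueness in Step 1, particularly for $m=3$ and $n \geq 7$, where $n-m-1 \geq 3$ and the dimension-count of Sect.~\ref{2.upperbo} does not help (there $\hh^{0}(\PV,\mathscr{F}) > \dim\GRA$, so we cannot conclude generic injectivity from a tangent-space calculation alone). For these cases one would need to control $\Pic(X)$ (or $\Pic$ of its smooth model $Y$) using the birational identification of $X$ with a projection of a Veronese surface $v_{(n-1)/2}(\pp^{2})$ in the odd case, and the analogous $\pp^{1}$-bundle description in the even case, to eliminate unwanted roots. Only for $(m,n)=(3,6)$ does $\Pic$ resist this elimination, which is precisely what makes that case the single exception.
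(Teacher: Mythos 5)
Your plan follows essentially the same route as the paper: your Step 1 is Proposition \ref{2.X1X2} (uniqueness of the $(n-m-1)$-th root of $\omega_{X}(2)$, proved by passing from $\Cl(X)$ to the torsion-free $\Pic(Y)$ of the smooth model, whose failure for $(m,n)=(3,6)$ gives the $4:1$ behaviour), and your Steps 2--3 are the paper's lifting argument, recovering $\varphi$ up to $\GL(U)$ and a scalar from the canonical surjection $U\otimes\mathcal{O}_{\PV}\twoheadrightarrow\mathscr{C}$ using exactly the vanishings you invoke (Lemmas \ref{2.lemmaprimovan} and \ref{2.lemmasecondovan}, stated there for $\im(\varphi)$ and $\mathscr{K}\otimes\Omega_{\PV}(2)$ rather than for $\mathscr{K}$ alone). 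One minor slip: for $(m,n)=(3,6)$ the degeneracy locus is the elliptic scroll over the Pfaffian cubic (the Palatini scroll is the threefold case $(4,6)$), but the torsion mechanism you describe is exactly the paper's.
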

On the one hand, this means that we can identify an open subset of $\GRA$ with an open subset of a subscheme of $\mathcal{H}$; on the other hand, it gives the lower bound
\begin{equation}
\label{2.lowerbound}
\dim \GRA \leq \dim \mathcal{H},
\end{equation}
which will be fundamental in the proof of the birationality of $\rho$ (Theorem \ref{2.teorema}).

\rosso{In order to prove Theorem \ref{2.geninj}, we need some preliminary results.}
\begin{propo}
\label{2.X1X2}
Following the notation of the previous sections, let $X_{1}, X_{2}$ be the degeneracy loci of two morphisms $\varphi_{1}, \varphi_{2}:\mappa{\mathcal{T}_{\PV}(-2)}{U \otimes \mathcal{O}_{\PV}}$; for $j=1,2$ let $\mathscr{C}_{j}=(i_{j})_{*}(\mathscr{L}_{j})=\coker(\varphi_{j})$ and let $\bar{q}_{j}:\mappa{Y_{j}}{X_{j}}$ be the projection on $\PV$, which is an isomorphism when restricted to $Y_{j} \setminus Y'_{j}$. Assume that $(m,n) \in$ 
\begin{equation*}
\{(m,n) \in \mathbb{N} \times \mathbb{N} \mbox{ such that } 3 \leq m < n-1\} \setminus \{(3,6)\}.
\end{equation*}
If $X_{1} = X_{2}$, then $\mathscr{C}_{1} \cong \mathscr{C}_{2}$.
\begin{proof}
Being $X := X_{1} = X_{2}$, we deduce by (\ref{2.canonicoX}) that% $S^{n-m-1}\mathscr{L}_{1} \otimes \mathcal{O}_{\PV}(\ellecan)$ and ${S^{n-m-1}\mathscr{L}_{2} \otimes \mathcal{O}_{\PV}(\ellecan)}$ are isomorphic, hence
\begin{equation}
\label{2.toerase}
S^{n-m-1}\mathscr{L}_{1} \cong S^{n-m-1}\mathscr{L}_{2}.
\end{equation}
Recall that $\mathscr{L}_{j}$ is a line bundle on the smooth locus $X^{\sm}$, whose complement has codimension at least three by (\ref{2.codimSing}). Let $t \in \mathbb{Z}$ be the minimum integer such that $\hh^{0}(X,\mathscr{L}_{j}(t))\neq 0$ and let $D_{j}$ be the closure in $X$ of the zero locus of a general element $\eta_{j} \in \Hh^{0}(X^{\sm},\left.{\mathscr{L}_{j}(t)}\right|_{X^{\sm}})$. Being $X$, $Y$ normal and irreducible (Lemma \ref{2.propX}), we are allowed to consider their divisor class groups. Since $\mathscr{L}_{j}$ is reflexive (Lemma \ref{LReflexive}), it is determined uniquely by the class of $D_{j}$, by $\mathscr{L}_{j}=\mathcal{I}_{D_{j}}^{*}(-t)$. We have
\[
\mathscr{C}_{1} \cong \mathscr{C}_{2} \quad \Leftrightarrow \quad \mathscr{L}_{1} \cong \mathscr{L}_{2} \quad \Leftrightarrow \quad D_{1} \sim D_{2},
\]
where by ${D_{1}}\sim{D_{2}}$ we mean that the two Weil divisors $D_{j}$ are linearly equivalent, i.e.~they represent the same class in $\Cl(X)$. By \cite[Proposition II.6.5]{Hartshorne} it follows that $\Cl(X)\cong \Cl(X^{\sm})$; by (\ref{2.codimrelative}), also $\Cl(Y_{j} \setminus Y'_{j}) \cong \Cl(Y_{j})$. As $\bar{q}_{1}$ is an isomorphism $\mappa{Y_{1}\setminus Y'_{1}}{X^{\sm}}$, we have
\begin{equation}
\label{2.chainisom}
\Cl(X) \cong \Cl(X^{\sm}) \cong \Cl(Y_{1}\setminus Y'_{1}) \cong \Cl(Y_{1}).
\end{equation}

Consider now the Weil divisors $(n-m-1)D_{j}$, seen as the closures in $X$ of the zero loci of the sections $\eta_{j}^{n-m-1} \in \Hh^{0}(X^{\sm},\left.{(S^{n-m-1}\mathscr{L}_{j}(t)})\right|_{X^{\sm}})$. From (\ref{2.toerase}) we deduce that $(n-m-1)D_{1} \sim (n-m-1)D_{2}$; moreover,
\begin{equation*}
\begin{array}{c}
(n-m-1)D_{1} \quad =_{\Cl(X)} \quad (n-m-1)D_{2} \\
\Updownarrow \\
(n-m-1)\left.D_{1}\right|_{X^{\sm}} \quad =_{\Cl(X^{\sm})} \quad (n-m-1)\left.D_{2}\right|_{X^{\sm}}\\
\Updownarrow \\
(n-m-1)\left.({\bar{q}}_{1}^{*}D_{1})\right|_{Y_{1}\setminus Y'_{1}} \quad =_{\Cl(Y_{1}\setminus Y'_{1})} \quad (n-m-1)\left.({\bar{q}}_{1}^{*}D_{2})\right|_{Y_{1}\setminus Y'_{1}} \\
\Updownarrow \\
(n-m-1)({\bar{q}_{1}}^{*}D_{1}) \quad =_{\Cl(Y_{1})} \quad (n-m-1)({\bar{q}_{1}}^{*}D_{2}).
\end{array}
\end{equation*}
Being $Y_{1}$ smooth, one has $\Cl(Y_{1}) \cong \Pic(Y_{1})$. The latter is torsion-free; indeed, if $n$ is odd, $Y$ is a blow-up of $\PU$ (cfr.~Sect.~\ref{2.geometricint}). If $n$ is even, this was proved in \cite[Lemma 3]{FaenziFania} making use of the fact that the Pfaffian hypersurface cut out by $\Pf(N_{\varphi_{1}})$ (cfr.~Sect.~\ref{2.PCPE}) has torsion-free Picard group, for $(m,n)$ in the supposed range.

As $\Pic(Y_{1})$ has no torsion, we can deduce the equality $({\bar{q}_{1}}^{*}D_{1}) =_{\Cl(Y_{1})} ({\bar{q}_{1}}^{*}D_{2})$, which induces by (\ref{2.chainisom}) the desired $D_{1} \sim D_{2}$.
\end{proof}
\end{propo}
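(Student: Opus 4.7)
The plan is to reduce the isomorphism $\mathscr{C}_1 \cong \mathscr{C}_2$ to a linear equivalence of Weil divisors on $X := X_1 = X_2$, transport it to a smooth birational model, and then cancel an integer multiple using torsion-freeness of the Picard group. The first step is immediate from the canonical sheaf formula (\ref{2.canonicoX}): since both degeneracy loci coincide, comparing $\omega_X$ computed from $\varphi_1$ and $\varphi_2$ gives
\[
S^{n-m-1}\mathscr{L}_1 \;\cong\; S^{n-m-1}\mathscr{L}_2.
\]
The whole problem is thus to strip off the symmetric power.

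Next, since $X$ is normal and integral (Lemma \ref{2.propX}) and each $\mathscr{L}_j$ is reflexive (Lemma \ref{LReflexive}) of generic rank one, I would encode $\mathscr{L}_j$ by a Weil divisor class $D_j \in \Cl(X)$: pick the least twist $t$ with $\hh^0(X,\mathscr{L}_j(t)) \neq 0$ and let $D_j$ be the closure in $X$ of the zero locus in $X^{\sm}$ of a general section. Then $\mathscr{L}_j \cong \mathcal{I}_{D_j}^*(-t)$, so $\mathscr{C}_1 \cong \mathscr{C}_2$ is equivalent to $D_1 \sim D_2$, while the previous paragraph translates into $(n-m-1)D_1 \sim (n-m-1)D_2$.

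The third step is to move the equality to a smooth variety. I would use the chain of isomorphisms
\[
\Cl(X) \;\cong\; \Cl(X^{\sm}) \;\cong\; \Cl(Y_1 \setminus Y'_1) \;\cong\; \Cl(Y_1) \;\cong\; \Pic(Y_1),
\]
all of which are guaranteed by the codimension bounds already established: $\codim_X \Sing(X) \geq 3$ by (\ref{2.codimSing}); $\bar{q}_1$ restricts to an isomorphism $Y_1 \setminus Y'_1 \to X^{\sm}$; $\codim_{Y_1} Y'_1 \geq 2$ by (\ref{2.codimrelative}); and $Y_1$ is smooth, being the zero locus of a general section of a globally generated bundle on $\mathcal{P}$. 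The pulled-back relation reads $(n-m-1)\,\bar{q}_1^*D_1 = (n-m-1)\,\bar{q}_1^*D_2$ in $\Pic(Y_1)$.

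The main obstacle is the final cancellation: this requires $\Pic(Y_1)$ to be torsion-free, and this is precisely the place where the hypothesis $(m,n)\neq (3,6)$ enters. When $n$ is odd, $Y_1$ is the blow-up of $\PU$ along the Pfaffian locus (Sect.~\ref{2.geometricint}), hence its Picard group is free, generated by the pullback of $\mathcal{O}_{\PU}(1)$ and the exceptional class. When $n$ is even, $Y_1$ is a $\pp^1$-bundle over (the relevant open subset of) the Pfaffian hypersurface cut out by $\Pf(N_{\varphi_1})$, and torsion-freeness of the latter's Picard group for $(m,n)$ in the stated range, hence of $\Pic(Y_1)$, is the content of \cite[Lemma 3]{FaenziFania}. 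Once torsion is ruled out one cancels $(n-m-1)$, obtains $\bar{q}_1^*D_1 = \bar{q}_1^*D_2$ in $\Pic(Y_1)$, and retraces the chain of isomorphisms to conclude $D_1 \sim D_2$ in $\Cl(X)$, which gives $\mathscr{L}_1 \cong \mathscr{L}_2$ and therefore $\mathscr{C}_1 \cong \mathscr{C}_2$.
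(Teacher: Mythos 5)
Your proposal is correct and follows essentially the same route as the paper's own proof: the canonical sheaf formula gives the symmetric-power isomorphism, reflexivity and normality encode each $\mathscr{L}_{j}$ as a Weil divisor class, the class-group chain transports the relation to the smooth model $Y_{1}$, and torsion-freeness of $\Pic(Y_{1})$ (blow-up of $\PU$ for $n$ odd, \cite[Lemma 3]{FaenziFania} for $n$ even) cancels the factor $n-m-1$. No substantive differences to report.
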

\begin{rmk}
In the case $(m,n)=(3,6)$, the last proposition does not guarantee the general injectivity of $\rho$; in this case the Picard group of the hypersurface in $\PU$ cut out by $\Pf(N_{\varphi})$ has torsion. In fact, it was proved in \cite{BazanMezzetti} and classically known to Fano \cite{Fano} that $\rho$ is $4:1$. As the map is finite and dominant, we have an equality between the dimensions of $\GRA$ and $\mathcal{H}$, as further shown in Proposition \ref{2.ugualiodiversi}.
\end{rmk}
\begin{lem}
\label{2.lemmaprimovan}
For all $3 \leq m < n-1$ we have
\[
\hh^{0}(\PV,\im(\varphi))=\hh^{1}(\PV,\im(\varphi))=0.
\]
\begin{proof}
In the notation of the proof of Lemma \ref{2.cohomCm}, we have $q_{*} \ker({\epsilon_{0}}')=\im(\varphi)$. It is sufficient to check the vanishings
\[
\hh^{0}(\mathcal{P},\ker({\epsilon_{0}}'))=\hh^{1}(\mathcal{P},\ker({\epsilon_{0}}'))=0.
\]
In the proof of Lemma \ref{2.cohomCm} we computed that the only possible non-zero cohomology group of $\ker({\epsilon_{0}}')$ is the $(m-1)$-th, hence the conclusion. 
\end{proof}
\end{lem}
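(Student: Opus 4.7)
The plan is to reduce the vanishing to a cohomology computation on $\mathcal{P} = \PU \times \PV$, exploiting the identification already implicit in the proof of Lemma \ref{2.cohomCm}.

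The first step is to relate $\im(\varphi)$ to the kernel $\ker({\epsilon_{0}}')$ that appears in the proof of Lemma \ref{2.cohomCm}. Applying $q_{*}$ to the last short exact sequence of the twisted Koszul complex
\[
\xymatrix{
0 \ar[r] & \ker({\epsilon_{0}}') \ar[r] & \mathcal{O}_{\mathcal{P}}(1,0) \ar[r]^-{{\epsilon_{0}}'} & \mathcal{O}_{Y}(1,0) \ar[r] & 0,
}
\]
and using $q_{*}\mathcal{O}_{\mathcal{P}}(1,0) \cong U \otimes \mathcal{O}_{\PV}$, $q_{*}\mathcal{O}_{Y}(1,0) \cong \mathscr{C}$ together with the vanishings of the higher direct images recorded in Remark \ref{2.usefulcomputations}, one obtains
\[
q_{*}\ker({\epsilon_{0}}') \cong \im(\varphi), \qquad \R^{i}q_{*}\ker({\epsilon_{0}}') = 0 \text{ for } i \geq 1.
\]
By \cite[Exercise III.4.1]{Hartshorne} this gives $\Hh^{i}(\PV,\im(\varphi)) \cong \Hh^{i}(\mathcal{P},\ker({\epsilon_{0}}'))$ for every $i$, so the problem is reduced to showing $\Hh^{0}(\mathcal{P},\ker({\epsilon_{0}}')) = \Hh^{1}(\mathcal{P},\ker({\epsilon_{0}}')) = 0$.

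The second step is to invoke the dimension-shift argument already carried out in the proof of Lemma \ref{2.cohomCm}. Splitting the twisted Koszul complex into short exact sequences of the form
\[
\xymatrix{
0 \ar[r] & \ker({\epsilon_{r}}') \ar[r] & \mathcal{O}_{\PU}(1-r)\boxtimes \Omega^{n-r-1}_{\PV}(n-2r) \ar[r] & \ker({\epsilon_{r-1}}') \ar[r] & 0
}
\]
for $r \geq 1$, and recalling that by the Bott--K\"unneth computations every middle term has vanishing cohomology on $\mathcal{P}$ with the only possible exception at $r = n/2$ (which contributes solely in degree $\frac{n}{2}+m-2$, and only when $n$ is even with $n \geq 2m+2$), one deduces by iterating the resulting long exact sequences that $\Hh^{i}(\mathcal{P},\ker({\epsilon_{0}}'))$ can be non-zero at most for $i = m-1$. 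Since the assumption $m \geq 3$ forces $m-1 \geq 2$, both $\Hh^{0}$ and $\Hh^{1}$ must vanish, and the lemma follows.

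The argument introduces no new geometric content: the only point requiring care is the bookkeeping needed to verify that, in each range of $(m,n)$, the shifts in the long exact sequences really concentrate the surviving cohomology in degree $m-1$ and nowhere lower. This verification, however, is already subsumed by the Bott--K\"unneth computations of Lemma \ref{2.cohomCm}, so no separate obstacle has to be overcome.
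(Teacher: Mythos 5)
Your proposal is correct and follows essentially the same route as the paper: identify $q_{*}\ker({\epsilon_{0}}')$ with $\im(\varphi)$, reduce to cohomology on $\mathcal{P}$ via the vanishing of the higher direct images, and use the Bott--K\"unneth dimension shift from Lemma \ref{2.cohomCm} to conclude that the only possibly non-zero cohomology of $\ker({\epsilon_{0}}')$ sits in degree $m-1\geq 2$. The only difference is that you spell out explicitly the pushforward of the short exact sequence and the appeal to \cite[Exercise III.4.1]{Hartshorne}, steps the paper leaves implicit.
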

\begin{lem}
\label{2.lemmasecondovan}
For all $3 \leq m < n-1$ we have
\[
\hh^{1}(\PV,\mathscr{K} \otimes \Omega_{\PV}(2))=0,
\]
where $\mathscr{K}$ was defined in (\ref{2.mappakercoker}).
\begin{proof}
Adopting the notation of the proof of Lemma \ref{2.cohomOC}, we deduce that ${q_{*} \ker(\delta_{1})=\mathscr{K} \otimes \Omega_{\PV}(2)}$. By the same argument as above, it is sufficient to check the vanishing of $\hh^{1}(\mathcal{P},\ker(\delta_{1}))$. In the proof of Lemma \ref{2.cohomOC} we computed that the only possible non-zero cohomology group of $\ker(\delta_{1})$ is the $(m-1)$-th, hence the conclusion.
\end{proof}
\end{lem}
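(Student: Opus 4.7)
The plan is to mimic the pushforward-of-Koszul-complex strategy used in Lemma \ref{2.cohomOC} and extended in Remark \ref{2.usefulcomputations}. The key step is to identify $\mathscr{K} \otimes \Omega_{\PV}(2)$ with the pushforward $q_{*} \ker(\delta_{1})$, where $\delta_{1}$ is the differential appearing in the Koszul complex (\ref{2.koszul}) twisted by $\mathcal{O}_{\PU}(1) \boxtimes \Omega_{\PV}(2)$ (adopting the notation of the proof of Lemma \ref{2.cohomOC}). Once this identification is in hand, the cohomology on $\PV$ reduces to a cohomology on $\mathcal{P}$ that has already been computed.

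The first step in executing this is to verify $q_{*}\ker(\delta_{1}) \cong \mathscr{K}\otimes\Omega_{\PV}(2)$ together with the vanishing of the higher direct images $\R^{i>0}q_{*}(\ker(\delta_{1}))$. The argument parallels Remark \ref{2.usefulcomputations}: pushing the twisted Koszul complex forward along $q$ degenerates, via the hypercohomology spectral sequence for $q_{*}$, into a locally free resolution of $\mathscr{C}\otimes\Omega_{\PV}(2)$ on $\PV$. Since $\Omega_{\PV}(2)$ is locally free, and hence flat, tensoring the four-term complex (\ref{2.mappakercoker}) with it preserves exactness, so the second syzygy of $\mathscr{C}\otimes\Omega_{\PV}(2)$ in this pushed-down resolution is precisely $\mathscr{K}\otimes\Omega_{\PV}(2)$.

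With the identification in place, a projection-formula argument yields $\hh^{i}(\PV,\mathscr{K}\otimes\Omega_{\PV}(2))=\hh^{i}(\mathcal{P},\ker(\delta_{1}))$ for every $i$. The dimension-shifting chain carried out inside the proof of Lemma \ref{2.cohomOC} shows that the only potentially non-zero cohomology group of $\ker(\delta_{1})$ sits in degree $m-1$. Since $m\geq 3$, this degree is at least $2$, so in particular $\hh^{1}(\mathcal{P},\ker(\delta_{1}))=0$, giving the desired vanishing.

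I expect the main technical obstacle to be the rigorous identification $q_{*}\ker(\delta_{1}) \cong \mathscr{K}\otimes\Omega_{\PV}(2)$, i.e.~the bookkeeping that identifies the correct syzygy of $\mathscr{C}\otimes\Omega_{\PV}(2)$ with $\mathscr{K}\otimes\Omega_{\PV}(2)$ after degeneration. A backup route would split (\ref{2.mappakercoker}) directly on $\PV$ into two short exact sequences, tensor both by the locally free sheaf $\Omega_{\PV}(2)$ (using $\mathcal{T}_{\PV}(-2)\otimes\Omega_{\PV}(2)\cong\END(\mathcal{T}_{\PV})$, whose cohomology is concentrated in degree $0$ with dimension $1$), and then chase long exact cohomology sequences, invoking Lemma \ref{2.cohomOC} for $\mathscr{C}\otimes\Omega_{\PV}(2)$ and Lemma \ref{2.lemmaprimovan} for $\im(\varphi)$.
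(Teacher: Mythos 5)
Your main argument is exactly the paper's proof: identify $q_{*}\ker(\delta_{1})$ with $\mathscr{K}\otimes\Omega_{\PV}(2)$ via the pushed-forward twisted Koszul complex, reduce to the vanishing of $\hh^{1}(\mathcal{P},\ker(\delta_{1}))$, and note from the computations in Lemma \ref{2.cohomOC} that the only possibly non-zero cohomology group of $\ker(\delta_{1})$ sits in degree $m-1\geq 2$. (The backup route you sketch is not needed and, as stated, would require a twisted analogue of Lemma \ref{2.lemmaprimovan}, since that lemma concerns $\im(\varphi)$ rather than $\im(\varphi)\otimes\Omega_{\PV}(2)$.)
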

We are now ready to prove Theorem \ref{2.geninj}\rosso{, along the lines of \cite[Lemma 9]{FaenziFania}. }
\begin{proof}[Proof of Theorem \ref{2.geninj}] Fix the notation as in Proposition \ref{2.X1X2} and suppose that ${X_{1}}$ and ${X_{2}}$ are equal. By Proposition \ref{2.X1X2}, this induces an isomorphism ${\alpha:\mappa{\mathscr{C}_{1}}{\mathscr{C}_{2}}}$. We are in the following scenario
\begin{equation*}
\xymatrix{
0 \ar[r] &
\mathscr{K}_{1} \ar[r] &
\mathcal{T}_{\PV}(-2) \ar@{.>}[d]^-{\exists \, \gamma} \ar[r]^-{\varphi_{1}} &
U \otimes \mathcal{O}_{\PV} \ar@{.>}[d]^-{\exists \, \beta} \ar[r]^-{\pi_{1}} &
\mathscr{C}_{1} \ar[r] \ar[d]^-{\alpha} &
0\\
0 \ar[r] &
\mathscr{K}_{2} \ar[r] &
\mathcal{T}_{\PV}(-2) \ar[r]^-{\varphi_{2}} &
U \otimes \mathcal{O}_{\PV} \ar[r]^-{\pi_{2}} &
\mathscr{C}_{2} \ar[r] &
0
}
\end{equation*}
We want to show that
\begin{itemize}
\item the isomorphism $\alpha$ induces isomorphisms $\beta$ and $\gamma$ such that the diagram above commutes;
\item up to multiply $\alpha$ by a scalar, we may assume that $\gamma$ is the identity map.
\end{itemize}
In this way, we get that $\varphi_{1}$ and $\varphi_{2}$ belong to the same orbit with respect to the action of $\GL(U)$, i.e.~they represent the same point in $\GRA$.

Let us compose $\pi_{1}$ with $\alpha$. In order to show that such a map can be lifted up to $\beta$, we apply the functor $\Hom_{\PV}(U \otimes \mathcal{O}_{\PV},-)$ to the sequence
\[
\xymatrix{
0 \ar[r] &
\im(\varphi_{2}) \ar[r] &
U \otimes \mathcal{O}_{\PV} \ar[r] &
\mathscr{C}_{2} \ar[r] &
0.
}
\]
Since the last term is
\[
\Ext^{1}_{\PV}(U \otimes \mathcal{O}_{\PV},\im(\varphi_{2})) 
\cong U^{*} \otimes \Hh^{1}(\PV,\im(\varphi_{2}))
\]
and its vanishing is guaranteed by Lemma \ref{2.lemmaprimovan}, we get
\[
%{\small
\xymatrix{
\End_{\PV}(U \otimes \mathcal{O}_{\PV}) \ar[r] &
\Hom_{\PV}(U \otimes \mathcal{O}_{\PV},\mathscr{C}_{2}) \ar[r] &
0.
}%}
\]
Therefore, we can lift up $\alpha$ to $\beta$; to check that $\beta$ is an isomorphism, we observe that $\ker(\beta)$ is free and its image via $\pi_{1}$ is zero by commutativity, so we have a map $\mappa{\ker(\beta)}{\im(\varphi)}$. By Lemma \ref{2.lemmaprimovan}, this map has to be zero and so $\ker(\beta)$ is trivial.

To lift up $\beta$ to $\gamma$, we apply the functor $\Hom_{\PV}(\mathcal{T}_{\PV}(-2),-)$ to the sequence
\[
\xymatrix{
0 \ar[r] &
\mathscr{K}_{2} \ar[r] &
\mathcal{T}_{\PV}(-2) \ar[r] &
\im(\varphi_{2}) \ar[r] &
0
}
\]
to get
\[
\xymatrix{
\End_{\PV}(\mathcal{T}_{\PV}(-2)) \ar[r] &
\Hom_{\PV}(\mathcal{T}_{\PV}(-2),\im(\varphi_{2})) \ar[r] &
0;
}
\]
indeed, the last term should be
\[
\Ext^{1}_{\PV}(\mathcal{T}^{}_{\PV}(-2),\mathscr{K}^{}_{2}) \cong \Hh^{1}(\PV,\mathscr{K}^{}_{2} \otimes \Omega^{}_{\PV}(2))
\]
and its vanishing is guaranteed by Lemma \ref{2.lemmasecondovan}. Therefore, $\beta$ can be lifted up to $\gamma$.

Let us notice that $\gamma$ is non-zero and so it is a non-zero multiple $\lambda \id$ of the identity map, as $\mathcal{T}_{\PV}(-2)$ is simple. Finally, the conclusion follows as soon as we substitute $\alpha, \beta$ with their multiples $\lambda^{-1} \alpha,\lambda^{-1} \beta$, so we may take $\gamma=\id$.
\end{proof}

\begin{thm}
\label{2.teorema}
The map $\rho$ is birational for all $(m,n)$ such that $4 \leq m < n-1$, and for $(m,n)=(3,5)$.
\begin{proof}
In the supposed range, we have
%\begin{equation*}
\begin{align*}
%\begin{array}{rlll}
\dim \GRA & \leq \dim \mathcal{H} & {\mbox{(\ref{2.lowerbound})}}\\
& \leq \hh^{0}(\PV,i_{*}\mathcal{N}) & {\mbox{Grothendieck's Theorem}} \\
& \leq \hh^{0}(\PV,\mathscr{F}) & {\mbox{(\ref{2.inequality})}}\\
& = \dim \GRA. & {\mbox{Proposition \ref{2.ugualiodiversi}}}
%\end{array}
%\end{equation*}
\end{align*}
It this way we see that $\rho$ is dominant; by Theorem \ref{2.geninj}, $\rho$ is also generically injective, so it is birational.
\end{proof}
\end{thm}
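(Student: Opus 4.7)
The plan is to show that $\rho$ is dominant by a squeeze argument on dimensions, and then combine dominance with the generic injectivity established in Theorem \ref{2.geninj} to conclude birationality. Since $\rho$ is already known to be generically injective in the range $(m,n) \neq (3,6)$, the inequality $\dim \gra(m,\Lambda^{2}V) \leq \dim \mathcal{H}$ of (\ref{2.lowerbound}) is automatic; the real work is to bound $\dim \mathcal{H}$ from above by the same number.

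First, I would invoke Grothendieck's Theorem identifying the Zariski tangent space of the Hilbert scheme at $[X]$ with $\Hh^{0}(X,\mathcal{N})$; this yields
\[
\dim \mathcal{H} \leq \hh^{0}(X,\mathcal{N}) = \hh^{0}(\PV, i_{*}\mathcal{N}).
\]
Next I would feed in the upper bound of (\ref{2.inequality}), which comes from the injection $i_{*}\mathcal{N} \hookrightarrow \mathscr{F}$ extracted from the snake lemma applied to diagram (\ref{2.injection}); this gives $\hh^{0}(\PV,i_{*}\mathcal{N})\leq \hh^{0}(\PV,\mathscr{F})$.

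The decisive step is then Proposition \ref{2.ugualiodiversi}, which, in the range $m \geq 4$ or $(m,n)=(3,5)$, computes $\hh^{0}(\PV,\mathscr{F}) = \dim \gra(m,\Lambda^{2}V)$ exactly. Stringing everything together produces the chain
\[
\dim \gra(m,\Lambda^{2}V) \leq \dim \mathcal{H} \leq \hh^{0}(\PV,i_{*}\mathcal{N}) \leq \hh^{0}(\PV,\mathscr{F}) = \dim \gra(m,\Lambda^{2}V),
\]
so every inequality is an equality. In particular, $\rho$ is dominant onto (a component of) $\mathcal{H}$ of the same dimension as its source.

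The main conceptual obstacle has already been absorbed into the earlier sections: the cohomology computations of Lemmas \ref{2.cohomOxt}, \ref{2.cohomCm}, \ref{2.cohomOC} (via the Kempf--Lascoux--Weyman method on the Koszul complex (\ref{2.koszul})) that power Proposition \ref{2.ugualiodiversi}, and the divisor-class analysis of Proposition \ref{2.X1X2} used in Theorem \ref{2.geninj}. Given those ingredients, the remaining task is purely formal: combine dominance with generic injectivity (Theorem \ref{2.geninj}, applicable since the excluded case $(3,6)$ is outside the hypothesis range) to conclude that $\rho$ is a birational morphism, which completes the proof.
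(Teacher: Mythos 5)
Your proposal is correct and follows essentially the same route as the paper: the chain $\dim \GRA \leq \dim \mathcal{H} \leq \hh^{0}(\PV,i_{*}\mathcal{N}) \leq \hh^{0}(\PV,\mathscr{F}) = \dim \GRA$ via (\ref{2.lowerbound}), Grothendieck's Theorem, (\ref{2.inequality}) and Proposition \ref{2.ugualiodiversi}, followed by combining the resulting dominance with the generic injectivity of Theorem \ref{2.geninj}. This matches the paper's proof step for step.
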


\begin{coro}
\label{2.corolll}
In the hypotheses of Theorem \ref{2.teorema}, $\mathcal{H}$ is irreducible and generically smooth.
\end{coro}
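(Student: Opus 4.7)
The proposal is to extract both statements as immediate by-products of the chain of inequalities
\[
\dim \GRA \,\leq\, \dim \mathcal{H} \,\leq\, h^0(X,\mathcal{N}) \,\leq\, h^0(\PV,\mathscr{F}) \,=\, \dim \GRA
\]
established inside the proof of Theorem \ref{2.teorema}, which of course forces all four quantities to coincide.

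For irreducibility, I would note that $\GRA$ is smooth and irreducible, being a Grassmannian, and that the birational map $\rho:\GRA \dashrightarrow \mathcal{H}$ is in particular dominant on its target. The closure $\overline{\rho(\GRA)}$ is therefore an irreducible closed subset of the Hilbert scheme containing the degeneracy locus $X_{\varphi}$ for every general $\varphi$. But by its very definition, $\mathcal{H}$ is the union of those irreducible components of the Hilbert scheme which contain such degeneracy loci, and a single irreducible set cannot sit densely inside more than one irreducible component. Hence $\mathcal{H} = \overline{\rho(\GRA)}$ and $\mathcal{H}$ is irreducible.

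For generic smoothness, I would invoke Grothendieck's theorem: at every point $[X] \in \mathcal{H}$ the Zariski tangent space is canonically $H^0(X,\mathcal{N})$, so $\dim \mathcal{H} \leq \dim T_{[X]}\mathcal{H} = h^0(X,\mathcal{N})$. At a general $[X]$, however, the equalities above yield $h^0(X,\mathcal{N}) = \dim \mathcal{H}$, whence $\dim T_{[X]} \mathcal{H} = \dim \mathcal{H}$, which is precisely the criterion of smoothness of $\mathcal{H}$ at $[X]$. Combined with the dimension count, this gives in addition the explicit value $\dim \mathcal{H} = m\bigl(\binom{n}{2}-m\bigr)$ announced in part i.\ of the Theorem.

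I expect essentially no obstacle here: once Theorem \ref{2.teorema} and its proof are in hand, the corollary is almost formal. The only mild care is in observing that the bound $h^0(X,\mathcal{N}) \leq h^0(\PV,\mathscr{F}) = \dim \GRA$ is valid at a general point of $\mathcal{H}$ — which is guaranteed since a general point of $\mathcal{H}$ is the image under the birational $\rho$ of a general point of the Grassmannian, where all the sheaves produced in Sections \ref{2.normalbu}--\ref{2.upperbo} behave generically and the cohomology computations of Section \ref{cohocomputat} apply.
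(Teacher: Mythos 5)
Your argument is correct and is essentially the one the paper intends: the corollary is left as an immediate consequence of the chain $\dim \GRA \leq \dim \mathcal{H} \leq \hh^{0}(X,\mathcal{N}) \leq \hh^{0}(\PV,\mathscr{F}) = \dim \GRA$ from the proof of Theorem \ref{2.teorema}, with irreducibility coming from dominance of $\rho$ together with the definition of $\mathcal{H}$, and generic smoothness from the equality of the tangent-space dimension $\hh^{0}(X,\mathcal{N})$ with $\dim \mathcal{H}$ at a general point, via Grothendieck's theorem. Nothing essential differs from the paper's (implicit) reasoning.
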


%%%%%%%%%%%%%%%%%%%%%%%%%%%%%%%%%%%%%%%%%%%%%%%%%%%%%%%%%%%%%%%%%%%%%%%%%%%%%%%%%%%%%%%%%%%%%%%%%%%
%%%%%%%%%%%%%%%%%%%%%%%%%%%%%%%%%%%%%%%%%%%%%%%%%%%%%%%%%%%%%%%%%%%%%%%%%%%%%%%%%%%%%%%%%%%%%%%%%%%

\section{The case \texorpdfstring{$m=3$}{m=3}: surfaces}
\label{2.casom3}
When $m=3$ and $n$ is even, the general element of $\im(\rho)$ is the projectivization of a general stable rank-two vector bundle on a general plane curve $C$ of degree $\frac{n}{2}$, with determinant $\mathcal{O}_{C}(\frac{n-2}{2})$; this description was given in \cite{FaenziFania}. In this section we will discuss the case $n$ odd.

By Theorems \ref{2.geninj} and \ref{2.teorema}, the map $\rho$ is generically injective but not dominant as soon as $n \geq 7$, so we can identify an open subset of $\gra(3,\Lambda^{2}V)$ with an open subset of a subscheme of $\mathcal{H}$. Our aim is to determine its codimension and describe geometrically the points in $\im(\rho)$ and in $\mathcal{H}$, explaining why a general point of $\mathcal{H}$ cannot be obtained as the degeneracy locus of a morphism $\mappa{\mathcal{T}_{\PV}(-2)}{\mathcal{O}_{\PV} \otimes U}$.

\begin{propo}
\label{codimcasosur}
In the case $m=3$, we have $\codim_{\mathcal{H}} \im(\rho)=\frac{1}{8}n(n-3)(n-5)$ if $n$ is odd, and ${\codim_{\mathcal{H}} \im(\rho)=\frac{3}{8}(n-4)(n-6)}$ if $n$ is even.
\begin{proof}
By Lemma \ref{2.elostesso} and Proposition \ref{2.ugualiodiversi}, it suffices to show that $\mathcal{H}$ is generically smooth along $\im(\rho)$. By (\ref{2.smoothness}) $X$ is smooth; hence, $\mathcal{H}$ is smooth at $X$ if $\hh^{1}(X,\mathcal{N})=\hh^{1}(\PV,\mathscr{F})=0$. This can be obtained by considering the second row of diagram (\ref{2.injection}) and by means of Lemmas \ref{2.cohomOxt}, \ref{2.cohomCm} and \ref{2.cohomOC}.
\end{proof}
\end{propo}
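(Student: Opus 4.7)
The plan is to reduce the codimension computation to the dimension estimates already available in Section~\ref{2.upperbo}, by proving that $\mathcal{H}$ is generically smooth along $\im(\rho)$. Once this is established, Grothendieck's Theorem identifies the local dimension of $\mathcal{H}$ at a general $X\in\im(\rho)$ with $\hh^{0}(X,\mathcal{N})$, and Lemma~\ref{2.elostesso} combined with Proposition~\ref{2.ugualiodiversi} gives the stated numerical values. Since $m=3$ and $n\geq 5$, we have $n>2m-3=3$, so by (\ref{2.smoothness}) a general $X$ is smooth and $\mathscr{L}$ is a line bundle; hence $\hh^{k}(X,\mathcal{N})=\hh^{k}(\PV,\mathscr{F})$ for every $k$ by Lemma~\ref{2.elostesso}, and in particular
\[
\codim_{\mathcal{H}}\im(\rho)=\hh^{0}(X,\mathcal{N})-\dim\GRA=\hh^{0}(\PV,\mathscr{F})-\dim\GRA,
\]
which by Proposition~\ref{2.ugualiodiversi} is exactly $\tfrac{1}{8}n(n-3)(n-5)$ for $n$ odd and $\tfrac{3}{8}(n-4)(n-6)$ for $n$ even.

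It remains to establish the smoothness of $\mathcal{H}$ at a general $X\in\im(\rho)$, i.e.\ the vanishing $\hh^{1}(X,\mathcal{N})=\hh^{1}(\PV,\mathscr{F})=0$. I propose to split the second row of (\ref{2.injection}) into the two short exact sequences
\[
0\to i_{*}\mathcal{O}_{X}\to\mathscr{C}^{m}\to\mathscr{G}\to 0,\qquad 0\to\mathscr{G}\to\Omega_{\PV}(2)\otimes\mathscr{C}\to\mathscr{F}\to 0,
\]
where $\mathscr{G}:=\im(\psi)$, and to read off the required vanishing from the long exact sequences of cohomology. Using Remark~\ref{2.usefulcomputations}, the cohomology of $i_{*}\mathcal{O}_{X}$, $\mathscr{C}^{m}$ and $\Omega_{\PV}(2)\otimes\mathscr{C}$ on $\PV$ coincides with the cohomology on $\mathcal{P}$ of $\mathcal{O}_{Y}$, $\mathcal{O}_{Y}^{m}(1,0)$ and $q^{*}\Omega_{\PV}(2)\otimes\mathcal{O}_{Y}(1,0)$, computed in Lemmas~\ref{2.cohomOxt}, \ref{2.cohomCm} and \ref{2.cohomOC}.

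Specialising to $m=3$, those lemmas show that $\hh^{1}$ and $\hh^{2}$ of $\Omega_{\PV}(2)\otimes\mathscr{C}$ both vanish, so the second sequence yields $\hh^{1}(\PV,\mathscr{F})\cong \hh^{2}(\PV,\mathscr{G})$. The first sequence then gives a surjection from $\hh^{2}(\PV,\mathscr{C}^{m})$ onto $\hh^{2}(\PV,\mathscr{G})$ modulo $\hh^{3}(\PV,i_{*}\mathcal{O}_{X})$; for $m=3$ the only non-vanishing cohomology of $\mathcal{O}_{Y}^{m}(1,0)$ lies in degrees $0$ and $m-2=1$, and the only non-vanishing cohomology of $\mathcal{O}_{Y}$ lies in degrees $0$ and $m-2=1$, so $\hh^{2}(\PV,\mathscr{C}^{m})=\hh^{3}(\PV,i_{*}\mathcal{O}_{X})=0$. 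Consequently $\hh^{2}(\PV,\mathscr{G})=0$ and the desired vanishing $\hh^{1}(\PV,\mathscr{F})=0$ follows.

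The main obstacle is precisely this vanishing, rather than any count: one must track the possible contribution in degree $m-2=1$ coming from $\mathcal{O}_{Y}$ in the even case, and verify that it is absorbed in the degree shift provided by the two short exact sequences. Once this is handled, the argument is purely formal and the proposition follows by assembling the pieces as above.
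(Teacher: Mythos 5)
Your proposal is correct and follows essentially the same route as the paper: reduce to generic smoothness of $\mathcal{H}$ along $\im(\rho)$ via Lemma \ref{2.elostesso} and Proposition \ref{2.ugualiodiversi}, then obtain $\hh^{1}(\PV,\mathscr{F})=0$ from the second row of diagram (\ref{2.injection}) together with Lemmas \ref{2.cohomOxt}, \ref{2.cohomCm} and \ref{2.cohomOC} (via Remark \ref{2.usefulcomputations}). Your explicit splitting into the two short exact sequences through $\mathscr{G}=\im(\psi)$ and the resulting chase $\hh^{1}(\mathscr{F})\cong\hh^{2}(\mathscr{G})=0$ is exactly the computation the paper leaves implicit, and all the vanishings you invoke for $m=3$ are as stated in those lemmas.
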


From now on, let us fix $m=3$ and let us suppose $n$ is odd, satisfying $n \geq 7$. Note that all the following results hold also in the case $n=5$: see Remark \ref{cason5dopo}.

\subsection{Veronese surfaces in \texorpdfstring{$\PV$}{P(V)}}\spazio\smskip
Firstly we observe that $n$ is always greater than $2m-3=3$, so by (\ref{2.smoothness}) $X$ is smooth; therefore, in the settings of the previous sections, $Y$ and $X$ turn out to be isomorphic via $\bar{q}$.

On the one hand, as we saw in Sect.~\ref{2.geometricint}, $Y$ is the blow-up of $\PU$ along the subscheme cut out by the $(n-1) \times (n-1)$ Pfaffians $(\Pf_{i})$ of $N=N_{\varphi}$, the Pfaffians of the matrices obtained by deleting the $i$-th row and column from $N$; for the general choice of $\varphi$, the ideal generated by these Pfaffians has codimension three and so its associated subscheme is empty.

On the other hand, $X$ is the image of the regular map given by the $\Pf_{i}$'s. Being these Pfaffians forms of degree $\frac{n-1}{2}$, linearly independent for the general $\varphi$, we can complete them to a basis $\{\Pf_{1},\dotsc,\Pf_{n},C_{1},\dotsc,C_{r-n+1}\}$ of $\mathbf{k}[y_{0},y_{1},y_{2}]_{\frac{n-1}{2}}$ and use this complete linear system of curves to embed $\PU$ in $\mathbb{P}^{r}$, where
\[
r=\dim\left(\mathbf{k}[y_{0},y_{1},y_{2}]_{\frac{n-1}{2}}\right)-1=\binom{\frac{n-1}{2}+2}{2}-1.
\]
The variety $X$ can be seen as the projection in $\PV$ of this Veronese surface in $\mathbb{P}^{r}$ with respect to the center spanned by the $C_{i}$'s.
\begin{equation}
\label{2.veroproj}
\xymatrix{
\PU \ar[rrr]^-{[\Pf_{1}:\dotso:\Pf_{n}:C_{1}:\dotso:C_{r-n+1}]} \ar[rrrd]_-{\rule{0pt}{8pt}[\Pf_{1}: \dotso: \Pf_{n}]} && & \mathbb{P}^{r} \ar@{-->}[d] \\
&&& \PV.
}
\end{equation}

However, not every $n$-uple of forms of degree $\frac{n-1}{2}$ is the set of Pfaffians of a matrix $N$, and this is the reason why $\rho$ is not dominant: only Veronese surfaces parametrized by Pfaffians are contained in $\im(\rho)$. In the next subsections we will explore more this phenomenon.
%
%%%%%%%%%
%
\subsection{Apolarity and special projections}\rule{1pt}{0pt}\smskip
Let $R$ be the polynomial ring $\Hh^{0}(\PU,\mathcal{O}_{\PU}(1))=\mathbf{k}[y_{0},y_{1},y_{2}]$. Let $S$ be the polynomial ring of homogeneous differential operators $\mathbf{k}[\de_{0},\de_{1},\de_{2}]$; $R$ acts on $S$ (and conversely) by differentiation:
\[
y^{\alpha}(\de^{\beta})=\alpha ! \binom{\beta}{\alpha}\de^{\beta-\alpha}
\]
if $\beta \geq \alpha$ and $0$ otherwise. Here $\alpha$ and $\beta$ are multi-indices, 
$\alpha !=\prod \alpha_{i}!$, $|\alpha|=\sum \alpha_{i}$, $\binom{\beta}{\alpha}=\prod \binom{\beta_{i}}{\alpha_{i}}$ 
and $\beta \geq \alpha$ if and only if $\beta_{i} \geq \alpha_{i}$ for all $i$. The perfect pairing between
forms of degree $d$ and homogeneous differential operators of the same degree is known as \emph{apolarity}.
\begin{thm}
\label{2.characterization}
Let $G \in R$ be a non-degenerate form of degree $n-3$. Consider a Veronese surface embedded via $|\mathcal{O}_{\PU}(\frac{n-1}{2})|$ in $\mathbb{P}^{r}$, where ${r=\binom{\frac{n-1}{2}+2}{2}-1}$; then its projection $X$ in $\PV$ with respect to the center spanned by ${\{\de^{\alpha}(G)\}_{|\alpha|=\frac{n-5}{2}}}$ is contained in $\im(\rho)$.
\smskip Conversely, a general element of $\im(\rho)$ arises as such a projection.
\end{thm}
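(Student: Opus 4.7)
The strategy is to deduce the theorem from a specialized Macaulay correspondence (stated as Proposition~\ref{2.corrisp} in the introduction) which matches non-degenerate forms $G \in R_{n-3}$, $R = \mathbf{k}[y_0,y_1,y_2]$, with codimension-$3$ Artinian Gorenstein ideals of $R$ whose $n$ minimal generators are the Pfaffians of some $n \times n$ skew-symmetric matrix of linear forms. This correspondence combines Macaulay's theorem on inverse systems, which produces the dual socle generator $G$ from the Gorenstein ideal, with the Buchsbaum-Eisenbud structure theorem for codimension-$3$ Gorenstein ideals, which produces the skew-symmetric matrix. Non-degeneracy of $G$ will correspond to $R/G^\perp$ realizing the maximal admissible Hilbert function $h_i = \min\!\bigl(\binom{i+2}{2}, \binom{n-1-i}{2}\bigr)$.

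For the ``conversely'' half, I would start with a general $\varphi$ and its matrix $N_\varphi$ from Sect.~\ref{2.PCPE}. The Pfaffian ideal $I_\varphi = (\Pf_1,\dots,\Pf_n) \subset R$ is Artinian of codimension three by the discussion in Sect.~\ref{2.geometricint}, hence Gorenstein, with socle degree $n-3$ as read off from its Buchsbaum-Eisenbud resolution. Macaulay's theorem produces a form $G \in R_{n-3}$, unique up to scalar, whose apolar ideal equals $I_\varphi$ after identifying $R$ with the ring of differential operators $S = \mathbf{k}[\partial_0,\partial_1,\partial_2]$. For $\varphi$ in a dense open subset of $\gra(3, \Lambda^2 V)$, a semicontinuity argument (or an explicit example produced from a generic $G$ via Proposition~\ref{2.corrisp}) shows that $G$ is non-degenerate.

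For the ``forward'' half and the identification of the center, the diagram (\ref{2.veroproj}) exhibits $X_\varphi$ as the projection of $v_{(n-1)/2}(\PU) \subset \mathbb{P}^r$ from the center cut out in $\mathbb{P}^r = \mathbb{P}(R_{(n-1)/2}^{\ast})$ by the vanishing of $\Pf_1,\dots,\Pf_n$, equivalently, from $\mathbb{P}\bigl(\langle \Pf_i \rangle^\perp\bigr)$. Identifying $R_{(n-1)/2}^{\ast}$ with $S_{(n-1)/2}$ via the apolarity pairing reduces the theorem to the linear-algebra equality $V_G = \langle \Pf_i \rangle^\perp$ inside $S_{(n-1)/2}$, where $V_G := \langle \partial^\alpha G : |\alpha| = (n-5)/2 \rangle$. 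The inclusion $V_G \subseteq \langle \Pf_i \rangle^\perp$ is the computation
\[
\Pf_i \cdot (\partial^\alpha G) = \partial^\alpha \cdot (\Pf_i \cdot G) = 0,
\]
using commutativity of the differential action and the fact that $\Pf_i$ annihilates $G$. Equality then follows by a dimension count: Gorenstein symmetry of the Hilbert function of $R/G^\perp$ yields $\dim V_G = h_{(n-5)/2} = h_{(n-1)/2} = (n-1)(n-3)/8$, which is precisely $\dim R_{(n-1)/2} - n$, the expected vector-space dimension of $\langle \Pf_i \rangle^\perp$.

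The main technical obstacle is Proposition~\ref{2.corrisp} itself: it is what bridges Macaulay with Buchsbaum-Eisenbud and coherently identifies $R$ with $S$ so that ``apolar ideal of $G$ in $S$'' translates into ``ideal of Pfaffians of $N_\varphi$ in $R$''; together with the non-degeneracy of $G$ for $\varphi$ in an open dense subset, this is where the real work lies. Once the proposition is in hand, the remainder of the argument is the apolarity and Gorenstein-symmetry bookkeeping sketched above.
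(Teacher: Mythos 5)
Your proposal is correct and follows essentially the same route as the paper: both directions rest on Proposition \ref{2.corrisp} (Macaulay's correspondence combined with the Buchsbaum--Eisenbud structure theorem), and your identification of the center of projection with the span of the derivatives of $G$, proved by the inclusion $\Pf_i\in G^{\perp}$ plus a dimension count via Gorenstein symmetry, is exactly the content of the paper's Lemma \ref{2.idealeort}, with Remark \ref{2.everysystem} and the $\PGL(V)$-equivariance of $\rho$ absorbing the residual choice of generators and coordinates just as in the paper's proof. The only cosmetic difference is that your semicontinuity argument for the non-degeneracy of $G$ is not needed, since part ii.\ of Proposition \ref{2.corrisp} already yields it directly for a general Pfaffian ideal.
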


Recall that a form $G$ of even degree $k$ is said to be \emph{non-degenerate} if its catalecticant matrix $\Cat(G)$ has maximal rank or, equivalently, if the elements $\{\de^{\alpha}(G)\}_{|\alpha|=\frac{k}{2}}$ are linearly independent in the vector space $R_{\frac{k}{2}}$; $\Cat(G)$ is defined as the square matrix whose $(i,j)$-th element is $D_{i}(D_{j}(G))$, having fixed a basis $\{D_{i}\}$ of $S_{\frac{k}{2}}$.

In order to prove the last theorem, we need some preliminary results. Let $I\subset R$ be an ideal such that $R/I$ is an Artinian, Gorenstein ring with (one-dimensional) socle in degree $k$; as $\Hilb(R/I,k)=1$, there is a homogeneous differential operator $F \in S$ of degree $k$, determined up to scalar, satisfying $G(F)=0$ for any $G \in I$. The operator $F$ is usually called the \emph{dual socle generator}.

Conversely, being given a form $F \in S$ of degree $k$, we can define $F^{\perp}$ as the (homogeneous, irrelevant) ideal in $R$ whose elements $G$ satisfy the property ${G(F)=0}$. The ring $R/F^{\perp}$ is usually denoted by $A^{F}$. The ideal $F^{\perp}$ can be described in terms of the derivatives of $F$, as follows.
\begin{lem}
\label{2.idealeort}
Let $F \in S$ of degree $k$. For any $d \leq k$, the homogeneous component $F^{\perp}_{d}=F^{\perp} \cap R_{d}$ is the orthogonal 
complement of the space of partial derivatives of order $k-d$ of $F$.
\begin{proof}
By convention, the (dual of the) orthogonal complement of a subspace of $S_{d}$ is made up by the differential operators in $R_{d}$ which annihilate all the elements in the subspace. We have therefore to show that, for all $D \in R_{d}$,
\[
D(F)=0
\qquad \Longleftrightarrow \qquad
D(y^{\alpha}(F))=0 \qquad \forall \; |\alpha|=k-d.
\]
Firstly we remark that by apolarity, for a form $F' \in S$ of degree $k-d$, one has
\[
y^{\alpha}(F')=0 \qquad \forall \; |\alpha|=k-d \qquad \Longleftrightarrow
\qquad F'=0.
\]
Consider now $D \in R$ of degree $d$. Since $D(y^{\alpha}(F))=y^{\alpha}(D(F))$, it is enough to apply 
the previous remark to $F'=D(F)$.
\end{proof}
\end{lem}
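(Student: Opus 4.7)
The plan is to unwind the meaning of ``orthogonal complement'' in the apolarity pairing and then exploit a simple commutation. Let $W_{d} \subset S_{d}$ denote the subspace spanned by $\{y^{\alpha}(F) : |\alpha|=k-d\}$; the claim is that $F^{\perp}_{d}$ coincides with the annihilator $W_{d}^{\perp} \subset R_{d}$ under the apolarity pairing $R_{d} \times S_{d} \to \mathbf{k}$. Equivalently, I must show that for every $D \in R_{d}$ the equality $D(F)=0$ holds if and only if $D(y^{\alpha}(F))=0$ for all multi-indices $\alpha$ with $|\alpha|=k-d$.

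The central observation will be the commutation identity $D(y^{\alpha}(F)) = y^{\alpha}(D(F))$, valid because both $D$ and $y^{\alpha}$ act on $S$ as constant-coefficient differential operators in the $\partial_{i}$-variables, and such operators commute. Granting this, the forward implication is immediate: if $D(F)=0$, applying $y^{\alpha}$ yields $0$. For the converse, I will set $F' := D(F) \in S_{k-d}$; the hypothesis combined with the commutation gives $y^{\alpha}(F')=0$ for every $\alpha$ of weight $k-d$. Writing $F' = \sum_{|\beta|=k-d} c_{\beta}\partial^{\beta}$ and plugging into the explicit rule $y^{\alpha}(\partial^{\beta}) = \alpha!\binom{\beta}{\alpha}\partial^{\beta-\alpha}$ in the same degree, only the diagonal term survives and one finds $y^{\alpha}(F') = \alpha!\, c_{\alpha}$; in characteristic zero, vanishing for all $\alpha$ forces $F' = 0$, i.e.~$D(F)=0$.

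The only real check is therefore the commutation identity; the rest is bookkeeping via apolarity. In practice, I would verify $D \cdot y^{\alpha} = y^{\alpha} \cdot D$ on monomials $\partial^{\gamma}$, which reduces to the trivial commutativity of partial differentiations with respect to the $\partial_{i}$'s; alternatively, one may view both $D$ and $y^{\alpha}$ as elements of a single polynomial ring in $y_{0},y_{1},y_{2}$ acting on $S$ via the apolarity action, and the commutativity of this ring suffices. I do not expect a serious obstacle, since the statement is essentially a reformulation of the standard Macaulay fact that $F^{\perp}$ in degree $d$ is cut out by annihilation against $F$ itself together with its derivatives of complementary order $k-d$.
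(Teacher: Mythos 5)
Your proof is correct and follows essentially the same route as the paper: reduce to the equivalence $D(F)=0 \Leftrightarrow D(y^{\alpha}(F))=0$ for all $|\alpha|=k-d$, use the commutation $D(y^{\alpha}(F))=y^{\alpha}(D(F))$, and conclude from the non-degeneracy of the apolarity pairing in degree $k-d$ applied to $F'=D(F)$. The only difference is that you make the non-degeneracy explicit via the diagonal computation $y^{\alpha}(F')=\alpha!\,c_{\alpha}$, which the paper simply cites as a consequence of apolarity.
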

The two correspondences described above are inverse to each other by the following Theorem on inverse systems by Macaulay, which we recall in the special case of plane curves.
\begin{thm*}[{\cite{Macaulay}}]
The map $F \mapsto A^{F}$ gives a bijection between plane curves $\Vi(F)$, $F \in S$ of degree $k$ and Artinian graded Gorenstein quotient rings of $R$ with socle in degree $k$.
\end{thm*}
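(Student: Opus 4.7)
The plan is to exhibit two mutually inverse constructions: in one direction, $F \in S_k$ (considered up to scalar) gives $A^F = R/F^\perp$; in the other, an Artinian graded Gorenstein quotient $A = R/I$ with socle in degree $k$ produces a dual socle generator $F \in S_k$ via apolar duality. The first construction clearly factors through the class $[F] \in \mathbb{P}(S_k)$, so the asserted bijection is really between plane curves $V(F)$ of degree $k$ and such quotient rings.

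For the forward direction I would use Lemma \ref{2.idealeort} to get
\[
\dim (A^F)_d \;=\; \dim\langle\, \partial^{\alpha}(F) : |\alpha| = k-d \,\rangle.
\]
This vanishes for $d > k$ (no derivatives of negative order exist), so $A^F$ is Artinian, and equals $1$ for $d = k$. To show the socle is one-dimensional and concentrated in degree $k$, I would take a homogeneous $g \in R_d$ with $y_i g \in F^\perp$ for $i = 0,1,2$: then $y_i(g(F)) = 0$ in $S$, and since the only elements of $S$ annihilated by all three $y_i$ are the constants in $S_0$, either $g(F) = 0$ or $k - d = 0$. Hence $\mathrm{Soc}(A^F) = (A^F)_k$ is one-dimensional, i.e.~$A^F$ is Gorenstein with socle in degree $k$.

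For the backward direction, given $A = R/I$ as above, $\dim A_k = 1$, so $I_k$ has codimension one in $R_k$ and the apolar orthogonal $I_k^\perp \subset S_k$ is one-dimensional; pick a generator $F$, unique up to scalar. The inclusion $I \subset F^\perp$ is immediate: for $D \in I_d$ and any $D' \in R_{k-d}$, one has $D'(D(F)) = (D'D)(F) = 0$ because $D'D \in I_k$ annihilates $F$, and by non-degeneracy of apolarity $D(F) = 0$. By the forward direction just established, $R/F^\perp$ is then Gorenstein with socle in degree $k$, and in that degree $F^\perp_k = I_k$ since both have codimension one in $R_k$ and one contains the other. The surjection $\pi\colon R/I \twoheadrightarrow R/F^\perp$ is then an isomorphism: any nonzero graded ideal in the Artinian ring $R/I$ meets its (one-dimensional, degree-$k$) socle nontrivially, but no nonzero socle element lies in $\ker\pi$ because $F^\perp_k = I_k$.

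The step I expect to require the most care is this final socle argument closing the backward direction: it crucially uses the forward direction to certify that $R/F^\perp$ itself is Gorenstein with socle in the same degree $k$, and combines this with the top-degree equality $F^\perp_k = I_k$ and the general fact that a graded ideal in a graded Artinian ring meets the socle. Uniqueness of $F$ up to scalar, and hence the well-definedness of the inverse map on the level of plane curves $V(F)$, follows since any valid $F$ lies in the one-dimensional space $I_k^\perp$. Everything else is essentially a dimension count driven by the perfect apolar pairing and the Hilbert function formula from Lemma \ref{2.idealeort}.
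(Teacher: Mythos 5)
Your argument is correct. Note, however, that the paper offers no proof of this statement at all: it is quoted as Macaulay's classical theorem on inverse systems, with a citation to \cite{Macaulay}, so there is no internal argument to compare yours against. What you have written is a sound, self-contained derivation in the apolarity language the paper sets up: the forward direction correctly combines Lemma \ref{2.idealeort} (giving $\dim (A^{F})_{d}=\dim\langle \partial^{\alpha}F\rangle_{|\alpha|=k-d}$, hence Artinianity and $\dim (A^{F})_{k}=1$) with the observation that a homogeneous $g$ whose class is killed by $\mathfrak{m}$ forces $g(F)$ to be a nonzero constant, so the socle sits in degree $k$ and is one-dimensional; the backward direction correctly produces $F$ as a generator of the one-dimensional space $I_{k}^{\perp}$, proves $I\subseteq F^{\perp}$ by perfectness of the pairing in degree $k-d$, and closes with the socle argument (any nonzero graded ideal of the Artinian ring $R/I$ contains a nonzero homogeneous socle element, necessarily of degree $k$, which is excluded by $F^{\perp}_{k}=I_{k}$), giving $I=F^{\perp}$ and uniqueness of $F$ up to scalar. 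This is essentially the standard modern proof of Macaulay's correspondence (an elementary instance of Matlis/graded duality), and it is compatible with how the theorem is used later in Proposition \ref{2.corrisp}; the only caveat is the implicit identification of a degree-$k$ plane curve $\Vi(F)$ with the form $F$ up to scalar, which you flag and which is the intended reading.
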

For the general matrix $N$, the ideal $I$ generated by the $n$ Pfaffians of order $n-1$ and degree $\frac{n-1}{2}$ has codimension three, and $R/I$ can be easily shown to be an Artinian graded Gorenstein ring with socle in degree $n-3$. In this case, Macaulay correspondence can be rewritten by means of Buchsbaum-Eisenbud Structure Theorem \cite{BuchsbaumEisenbudAlg}, 
linking homogeneous polynomials in $S$ with skew-symmetric matrices of forms on $\PU$. Moreover, if we focus 
only on non-degenerate polynomials, the correspondence restricts to linear skew-symmetric matrices.
\begin{propo}
\label{2.corrisp}
\spazio
\begin{enumerate}[label=\textup{\roman{*}.}, ref=(\roman{*})]
\item The map $F \mapsto F^{\perp}$ gives a bijection between polynomials $F \in S$ of degree 
$n-3$, up to scalars, with $n\geq 5$ odd, and (Artinian graded Gorenstein) ideals $I$ of codimension three in $R$, with socle in 
degree $n-3$, generated by the Pfaffians of a skew-symmetric matrix of forms of positive degrees in $R$.
\item This correspondence restricts to a one-to-one correspondence between non-degenerate polynomials 
$F \in S$ of degree $n-3$, up to scalars, with $n\geq 5$ odd, and (Artinian graded Gorenstein) ideals $I$ of codimension three in $R$ 
generated in degree $\frac{n-1}{2}$ by the $n$ Pfaffians of a $n\times n$ skew-symmetric matrix of 
linear forms in $R$.
\end{enumerate}
\begin{proof} \spazio
\begin{enumerate}[label=\textup{\roman{*}.}, ref=(\roman{*})]
\item By Macaulay correspondence, $A^{F}=R/F^{\perp}$ is an Artinian graded Gorenstein ring. Being 
Artinian, $F^{\perp}$ is irrelevant and so it has codimension three; we can therefore apply Buchsbaum-Eisenbud Structure Theorem and conclude.
\smskip Conversely, an ideal $I$ satisfying the hypotheses 
has codimension three in $R=\mathbf{k}[y_{0},y_{1},y_{2}]$, so it is irrelevant and therefore $R/I$ is 
an Artinian graded Gorenstein ring with socle in degree $n-3$. We conclude again by Macaulay correspondence.
\item Let $F \in S$ be a non-degenerate form of degree $n-3$ and let $I=F^{\perp}$ its Gorenstein, 
codimension-three associated ideal. Let us set $h=\frac{n-3}{2}$ for simplicity. The partial derivatives of order $h$ of $F$ span the 
whole space $S_{h}$; 
therefore, by Lemma \ref{2.idealeort}, $I$ is zero in degree $\leq h$. Moreover, a 
computation shows that ${\dim I_{h+1}=n}$. Let $\nu$ be the minimal number of generators 
of $I$ (hence $\nu \geq n$). By Buchsbaum-Eisenbud Structure Theorem, $I$ is generated by the 
$\nu$ Pfaffians of a ${\nu \times \nu}$ skew-symmetric matrix of homogeneous forms of degree at least 
one. Therefore, the minimum among the degrees of the generators is $\frac{\nu-1}{2}$, but $I$ is non-zero
in degree $h+1=\frac{n-1}{2}$, so $\nu=n$ and the entries of the matrix are linear forms.
\smskip Conversely, let $I$ satisfy the hypotheses of the statement and let us consider the graded Betti
numbers $\beta_{ij}(R/I)$ of the corresponding quotient ring. Being $I$ Gorenstein and minimally
generated by $n$ elements of degree $h+1$, the Betti numbers are all zero with the exceptions
\begin{equation*}
\beta_{0,0}(R/I)=\beta_{3,n}(R/I)=1, \qquad \qquad \beta_{1,\frac{n-1}{2}}(R/I)=\beta_{2,\frac{n+1}{2}}(R/I)=n.
%\begin{array}{lcl}
%\beta_{0,0}(R/I)=1, & \rule{15pt}{0pt} & \beta_{1,\frac{n-1}{2}}(R/I)=n, \\ \beta_{2,\frac{n+1}{2}}(R/I)=n,
%& & \beta_{3,n}(R/I)=1.
%\end{array}
\end{equation*}
One can show by computations that $\Hilb(R/I,n-3)=1$ and $\Hilb(R/I,n-2)=0$, so that the socle is in degree $n-3$. Let $F$ be the dual socle generator; by Macaulay correspondence,
$I=F^{\perp}$. If $F$ was degenerate, then by definition its derivatives of order $h$ 
would be linearly dependent, i.e.~they would not span the whole vector space $S_{h}$. 
But this would imply, by Lemma \ref{2.idealeort}, that $I$ is non-zero in degree $h$, 
hence a contradiction. \qedhere
\end{enumerate}
\end{proof}
\end{propo}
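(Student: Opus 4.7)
My plan is to treat the two parts of the proposition as bijections, constructing the inverse maps using Macaulay's Theorem on inverse systems (just quoted) and the Buchsbaum-Eisenbud Structure Theorem for codimension-three Gorenstein ideals in $R$. Lemma \ref{2.idealeort} will supply the essential dictionary between $F^{\perp}$ and the spaces spanned by partial derivatives of $F$.

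For part i, starting from $F \in S_{n-3}$, Macaulay's Theorem immediately shows that $A^{F} = R/F^{\perp}$ is an Artinian graded Gorenstein ring with socle in degree $n-3$; since $R$ has only three variables, being Artinian forces $F^{\perp}$ to have codimension three, and Buchsbaum-Eisenbud then presents $F^{\perp}$ as the ideal of Pfaffians of a skew-symmetric matrix with entries of positive degree. Conversely, a codimension-three Gorenstein ideal $I \subset R$ is automatically Artinian with one-dimensional socle in a single degree (which our hypothesis pins to $n-3$), so Macaulay's Theorem returns an essentially unique $F \in S_{n-3}$ with $F^{\perp} = I$.

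The substantive work lies in part ii. In the forward direction, I would first rephrase the non-degeneracy of $F$ via Lemma \ref{2.idealeort} as the vanishing $F^{\perp}_{(n-3)/2} = 0$, then upgrade this to $F^{\perp}_d = 0$ for every $d \leq (n-3)/2$ by exploiting that $R$ has no zero-divisors: any $D \in R_d$ with $D(F) = 0$ would satisfy $D \cdot R_{(n-3)/2 - d} \subset F^{\perp}_{(n-3)/2} = 0$, forcing $D=0$. A second application of Lemma \ref{2.idealeort} in degree $(n-1)/2$ then gives $\dim F^{\perp}_{(n-1)/2} = \dim R_{(n-1)/2} - \dim R_{(n-5)/2}$, which evaluates to $n$. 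Hence $F^{\perp}$ has exactly $n$ minimal generators, all sitting in the minimal degree $(n-1)/2$.

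At this point Buchsbaum-Eisenbud writes $F^{\perp}$ as the Pfaffian ideal of a $\nu \times \nu$ skew-symmetric matrix with entries of degree at least one. Each $(\nu-1)\times(\nu-1)$ Pfaffian is a sum of products of $(\nu-1)/2$ matrix entries, so every minimal generator has degree at least $(\nu-1)/2$; matching this bound against the known minimum generator degree $(n-1)/2$ yields $\nu \leq n$, while the existence of $n$ independent generators in the minimum degree forces $\nu \geq n$, so $\nu = n$ and the Pfaffian degree bound becomes an equality that forces every matrix entry to be linear. Conversely, given $I$ generated by the Pfaffians of an $n \times n$ linear skew-symmetric matrix, $I_{(n-3)/2} = 0$ by degree reasons, and Lemma \ref{2.idealeort} then says the partials of order $(n-3)/2$ of the dual socle generator $F$ produced by part i span $S_{(n-3)/2}$, which is precisely non-degeneracy. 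I expect the Pfaffian-degree step pinning down $\nu = n$ to be the main obstacle; the remaining pieces reduce to direct applications of Macaulay and Buchsbaum-Eisenbud together with elementary binomial identities.
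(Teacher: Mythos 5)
Parts i and the forward half of part ii follow the paper's route closely and are correct: you use Macaulay plus Buchsbaum--Eisenbud exactly as the paper does, and your derivation of $F^{\perp}_{d}=0$ for $d\leq\frac{n-3}{2}$ from $F^{\perp}_{(n-3)/2}=0$ via the ideal property and the fact that $R$ is a domain is a tidy substitute for the paper's tacit "derivatives of higher order still span" argument; the count $\dim F^{\perp}_{(n-1)/2}=n$ and the bound $\nu=n$ forcing linear entries also match the paper. (Minor point: when you announce that $F^{\perp}$ has \emph{exactly} $n$ minimal generators all in degree $\frac{n-1}{2}$, this is at that stage only known for that degree --- a priori there could be further minimal generators in higher degrees --- but since your subsequent argument only uses $\nu\geq n$ together with the Pfaffian degree bound $\frac{\nu-1}{2}\leq\frac{n-1}{2}$, no harm is done.)

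The genuine gap is in the converse of part ii. You invoke ``the dual socle generator $F$ produced by part i'' and then apply Lemma \ref{2.idealeort} in degree $\frac{n-3}{2}$, but part i is a correspondence with ideals whose socle lies in degree $n-3$, and that is \emph{not} among the hypotheses of part ii: there you are only given that $I$ is Artinian Gorenstein of codimension three and generated in degree $\frac{n-1}{2}$ by the $n$ Pfaffians of a linear $n\times n$ skew-symmetric matrix. Until you prove that the socle of $R/I$ sits in degree $n-3$ (equivalently $\deg F=n-3$), the dual socle generator has unknown degree $k$, and Lemma \ref{2.idealeort} applied to $I_{(n-3)/2}=0$ only says that the partials of $F$ of order $k-\frac{n-3}{2}$ span $S_{(n-3)/2}$ --- which is the non-degeneracy condition only when $k=n-3$; moreover without $k=n-3$ the ideal is not even known to lie in the part-i family, so the claim that the correspondence ``restricts'' is assumed rather than proved. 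The paper closes this by reading off the graded Betti numbers from the Buchsbaum--Eisenbud resolution, $\beta_{0,0}=\beta_{3,n}=1$ and $\beta_{1,\frac{n-1}{2}}=\beta_{2,\frac{n+1}{2}}=n$, and computing $\Hilb(R/I,n-3)=1$, $\Hilb(R/I,n-2)=0$, so that the socle is indeed in degree $n-3$. This is an easy computation from the resolution $0 \to R(-n) \to R(-\tfrac{n+1}{2})^{n} \to R(-\tfrac{n-1}{2})^{n} \to R \to R/I \to 0$, but it must appear in your argument; as written, the converse is circular on exactly this point.
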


\begin{rmk}
A particular version ($n=7$) of the second correspondence above was already known and, actually, extensively used. 
The correspondence between non-degenerate plane quartics and nets of alternating forms on a vector 
space of dimension seven plays an important role, for instance, in the geometric realizations of 
prime Fano threefolds of genus twelve \cite{MukaiFano, MukaiCurves, SchreyerGeometry}.
\end{rmk}

\begin{rmk}
\label{2.everysystem}
Fixed a Gorenstein, codimension-three ideal $I$ generated by $n$ forms of degree $\frac{n-1}{2}$, 
Buchsbaum-Eisenbud Structure Theorem guarantees the existence of a $n \times n$ skew-symmetric 
matrix $N$ of linear forms whose Pfaffians generate $I$, as we showed in the proof of Proposition 
\ref{2.corrisp}. Actually, \emph{any} minimal system of generators of $I$ arises from a suitable 
matrix $N'$, congruent to $N$. Indeed, consider the matrix $A \in \GL_{n}$ taking the ``Pfaffian'' 
system of generators into the new one. Then these new generators are the Pfaffians of the matrix 
$(A^{-1})^{t}NA^{-1}$.
\end{rmk}
\begin{rmk}
Let us observe that the correspondence developed in Proposition \ref{2.corrisp} is constructive. 
On the one hand, it is clear how, from a skew-symmetric matrix, one can get $F$ by apolarity; on 
the other hand, once given $F$, it is possible to explicitly realize a skew-symmetric matrix whose 
Pfaffians generate the ideal $F^{\perp}$. This is possible thanks to the constructive proof of 
Buchsbaum-Eisenbud Structure Theorem; a concrete example of such a construction can be found in \cite{Tanturri}.
\end{rmk}

We are ready to provide the
\begin{proof}[Proof of Theorem \ref{2.characterization}]
Let $G=\sum c_{\beta} y^{\beta}$. As the projection is linear, the composition ${\mappa{\PU}{\PV}}$ as in (\ref{2.veroproj}) is given by $n$ forms of degree $\frac{n-1}{2}$, whose orthogonal complement in $R_{\frac{n-1}{2}}$ is spanned by the elements $\{\de^{\alpha}(G)\}_{|\alpha|=\frac{n-5}{2}}$. Let us denote by $I$ the ideal generated by these $n$ forms. By Proposition \ref{2.corrisp} and Lemma \ref{2.idealeort} applied to $F:=\sum c_{\beta} \de^{\beta}$, $I=F^{\perp}$ is Gorenstein and has codimension three; by Remark \ref{2.everysystem}, any set of generators of $I$ is made up by the Pfaffians of a suitable matrix $N$, i.e.~any possible projection $X$ is in $\im(\rho)$.

Conversely, consider the image $X$ of $\PU$ via the map given by the $n$ Pfaffians $(\Pf_{i})$ of a general matrix $N$. Let $I$ be the ideal generated by these Pfaffians. $I$ is generically of codimension three, so Proposition \ref{2.corrisp} applies and we get $I=F^{\perp}$ for some non-degenerate $F=\sum c_{\beta} \de^{\beta} \in S$. By Lemma \ref{2.idealeort} we can complete the set of Pfaffians to a basis $\mathcal{B}$ of $R_{\frac{n-1}{2}}$ with the derivatives of order $\frac{n-5}{2}$ of $G:=\sum c_{\beta} y^{\beta}$. Consider $\PU$ embedded in $\mathbb{P}^{r}$ via $\mathcal{B}$ and then projected via $\pi$ to $\PV$ with respect to the center spanned by $\{\de^{\alpha}(G)\}_{|\alpha|=\frac{n-5}{2}}$. The so-obtained Veronese surface $X' \subset \PV$ is in $\im(\rho)$ by the first part of the statement, so it is the image of $\PU$ via a map $[f_{1}:\dotsc:f_{n}]$ given by the Pfaffians of a suitable matrix.
\[
\xymatrix{
&& \mathbb{P}^{r} \ar@{-->}[lldd]_-{\pi}\\
&& \PU \ar[u]^-{\mathcal{B}} \ar[lld]^-{\rule{0pt}{6pt}[f_{1}:\dotso:f_{n}]} \ar[rrd]^-{\rule{6pt}{0pt}[\Pf_{1}:\dotso:\Pf_{n}]}\\
\PV \ar[rrrr]_{\exists \, A} &&&&\PV
}
\]
Since the polynomials $\{f_{i}\}$ and $\{\Pf_{i}\}$ generate the same ideal, there exists an ${A \in \PGL(V)}$ such that the diagram above commutes. It follows that $X$ can be obtained as the projection via $A\circ \pi$ of $\PU$ embedded via $\mathcal{B}$ in $\mathbb{P}^{r}$.
\end{proof}

\subsection{The general element of \texorpdfstring{$\mathcal{H}$}{H}}\spazio\smskip
Theorem \ref{2.characterization} provided a description of the general point in $\im(\rho)$; in particular, a general projection in $\PV$ of the Veronese surface $v_{\frac{n-1}{2}}(\PU)$ does not belong to $\im(\rho)$. Such projections are obviously contained in $\mathcal{H}$, so a natural question is whether they are dense in $\mathcal{H}$.

\begin{propo}
\label{genelediH}
$\mathcal{H}$ is irreducible; its general element is a general projection in $\PV$ of a Veronese surface $v_{\frac{n-1}{2}}(\PU)\subset \pp^{r}$, where $r=\binom{\frac{n-1}{2}+2}{2}-1$.
\end{propo}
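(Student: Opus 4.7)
The plan is to compare $\mathcal{H}$ with the closure $\overline{\mathcal{V}}\subset\Hilb(\PV)$ of the locus of generic projections of the Veronese $v_{d}(\PU)$ into $\PV$ (with $d:=\frac{n-1}{2}$), and conclude by a local smoothness argument combined with a dimension count that the two coincide; this will yield both the irreducibility of $\mathcal{H}$ and the description of its general element.

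First I would set up the parameter space $\mathcal{W}:=\pp(V\otimes R_{d})$, where $R_{d}:=\Hh^{0}(\PU,\mathcal{O}_{\PU}(d))$: a general element of $\mathcal{W}$ corresponds (up to scalar) to an $n$-tuple of linearly independent forms of degree $d$ on $\PU$, and hence defines a morphism $\phi:\mappa{\PU}{\PV}$ realizing $\phi(\PU)$ as a general projection of $v_{d}(\PU)\subset\pp^{r}$. The assignment $\phi\mapsto\phi(\PU)$ defines a rational map $\mathcal{W}\dashrightarrow \Hilb(\PV)$ whose image has irreducible closure $\overline{\mathcal{V}}$. Since two general $\phi,\phi'$ with the same image differ by an automorphism of $\PU=\pp^{2}$, i.e.~by an element of $\PGL(U)$, the generic fibre is a $\PGL(U)$-orbit, giving
\[
\dim \overline{\mathcal{V}}\;=\;\dim \mathcal{W}-\dim \PGL(U)\;=\;n\binom{d+2}{2}-1-8\;=\;\frac{n(n+1)(n+3)}{8}-9.
\]
By Theorem \ref{2.characterization} every element of $\im(\rho)$ is such a projection, so $\im(\rho)\subset\overline{\mathcal{V}}$.

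Next I would use the smoothness of $\mathcal{H}$ along $\im(\rho)$. At a general $X\in \im(\rho)$ the proof of Proposition \ref{codimcasosur} already establishes $\hh^{1}(X,\mathcal{N})=0$, so by Grothendieck's theorem $\mathcal{H}$ is smooth at $X$ with local dimension
\[
\hh^{0}(X,\mathcal{N})\;=\;\dim \gra(3,\Lambda^{2}V)+\tfrac{1}{8}n(n-3)(n-5)\;=\;3\binom{n}{2}-9+\tfrac{1}{8}n(n-3)(n-5),
\]
which simplifies to $\frac{n(n+1)(n+3)}{8}-9=\dim\overline{\mathcal{V}}$. Smoothness forces $\mathcal{H}$ to be locally irreducible at $X$, so $X$ lies in a unique component $C$ of $\mathcal{H}$; since $\overline{\mathcal{V}}$ is irreducible, passes through $X$, and has the same dimension as $C$, it must coincide with $C$. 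Applying the same smoothness argument at an arbitrary general degeneracy locus in any component of $\mathcal{H}$ shows that every such component equals $\overline{\mathcal{V}}$, so $\mathcal{H}=\overline{\mathcal{V}}$ is irreducible, with general element a generic projection of the Veronese surface.

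The main subtlety I expect is the fibre description of the image map $\mathcal{W}\dashrightarrow\Hilb(\PV)$: one has to check that a generic $\phi$ is an isomorphism onto $\phi(\PU)$ (automatic, since projecting $v_{d}(\PU)\subset \pp^{r}$ from a generic linear center of codimension $n$ yields an embedding) and that any two such $\phi,\phi'$ with $\phi(\PU)=\phi'(\PU)$ satisfy $\phi^{-1}\circ\phi'\in\PGL(U)$ (immediate, since both maps are isomorphisms onto the same subscheme). Once these are in place, the rest is a routine simplification of the two dimension expressions above.
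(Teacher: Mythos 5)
Your proposal is correct and follows essentially the same route as the paper: it parametrizes projections of $v_{\frac{n-1}{2}}(\PU)$ (the paper's map $\xi$ from the space of $n$-tuples of forms, modulo the $\GL_3$/$\PGL(U)$ action giving the same $\frac{1}{8}n(n+1)(n+3)-9$), places $\im(\rho)$ inside this family via Theorem \ref{2.characterization}, and matches this dimension with the local dimension $\hh^{0}(X,\mathcal{N})$ of $\mathcal{H}$ at a general point of $\im(\rho)$, using $\hh^{1}(X,\mathcal{N})=0$ and generic smoothness to identify the unique component, exactly as in Lemma \ref{ledimsonogug} and the paper's proof.
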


To prove this proposition, we consider a parametrization of such projections. The linear space $\mathbf{k}[y_{0},y_{1},y_{2}]_{\frac{n-1}{2}}$ has dimension $r+1$, so we have a rational map
\begin{equation}
\label{mappaxi}
\xymatrix{
\mathbb{A}^{(r+1)n} \ar@{-->}[rr]^-{\xi} && \mathcal{H}
}
\end{equation}
sending $n$ linearly independent forms $f_{1},\dotsc,f_{n}$ of degree $\frac{n-1}{2}$ to the point representing the image of the map
\begin{equation}
\label{imageofthemap}
\xymatrix{
\PU \ar[rr]^-{[f_{1}:\dotso:f_{n}]} && \PV.
}
\end{equation}

From the irreducibility of $\mathbb{A}^{(r+1)n}$ we deduce that $\im(\xi)$ is irreducible.

\begin{lem}
\label{ledimsonogug}
We have $\dim (\im(\xi)) = \dim(\mathcal{H})$.
\begin{proof}
On the one hand, there is a natural $\GL_{3}$-action on $\mathbf{k}[y_{0},y_{1},y_{2}]_{1}$, acting as a change of basis on $U$; this induces an action on $\mathbf{k}[y_{0},y_{1},y_{2}]_{\frac{n-1}{2}}$ and therefore on $\mathbb{A}^{(r+1)n}$, and one can see that $\xi$ 
factors through this action. On the other hand, take two points $V_{1},V_{2}$ in $\im(\xi)$ such that $V_{1}=V_{2}$.
By the commutativity of the diagram
%\[
%\xymatrix{
%& & V_{1} \ar@{=}[dd]\\
%\PU \ar[urr]^-{[f_{1}:\dotso:f_{n}]\,}_-{\sim} \ar[drr]_-{[g_{1}:\dotso:g_{n}]\,}^-{\sim}\\
%& & V_{2}
%}
%\]
\[
\xymatrix{
& & V_{1} \ar@{=}[d]\\
\PU \ar[urr]^-{[f_{1}:\dotso:f_{n}]\,}_-{\sim} \ar[rr]_-{[g_{1}:\dotso:g_{n}]\,}^-{\sim} & & V_{2}
}
\]
we get an automorphism of $\PU$, i.e.~the two maps $[f_{1}:\dotso:f_{n}]$ and $[g_{1}:\dotso:g_{n}]$ belong to the
same class modulo $\GL_{3}$. Hence
\begin{align*}
\dim (\im(\xi))	&	=\dim (\mathbb{A}^{(r+1)n}) - \dim (\GL_{3}) \\
				&	= n\binom{\frac{n-1}{2}+2}{2} - 9 \\
				&	= \frac{1}{8}n(n+3)(n+1)-9. 
\end{align*}
By Proposition \ref{codimcasosur},
\begin{align*}
\dim (\mathcal{H})	&	=\dim (\im(\rho)) + \codim_{\mathcal{H}} (\im(\rho)) \\
					&	= 3\binom{n}{2}-9 + \frac{1}{8}n(n-3)(n-5) \\
					&	= \frac{1}{8}n(n+3)(n+1)-9
\end{align*}
and therefore the conclusion follows.
\end{proof}
\end{lem}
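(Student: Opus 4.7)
The plan is to reduce the lemma to two independent dimension computations whose polynomial expressions coincide. First I would compute $\dim\im(\xi)$ using an orbit-stabilizer argument for a natural $\GL_{3}$-action on $\mathbb{A}^{(r+1)n}$. Second, I would compute $\dim\mathcal{H}$ by combining the fact that $\rho$ is generically injective (Theorem \ref{2.geninj}) with the codimension formula for $\im(\rho)$ inside $\mathcal{H}$ (Proposition \ref{codimcasosur}). Finally, a direct algebraic manipulation shows that both expressions equal $\tfrac{1}{8}n(n+1)(n+3)-9$.

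For $\dim\im(\xi)$, I would introduce the $\GL_{3}$-action coming from the change of basis of $U=\mathbf{k}\langle y_{0},y_{1},y_{2}\rangle$, extended to $\mathbf{k}[y_{0},y_{1},y_{2}]_{(n-1)/2}$ by the symmetric power and then applied diagonally to the $n$ forms. The easy direction is that $\xi$ is constant on $\GL_{3}$-orbits: if $g\in\GL_{3}$ and we replace $(f_{1},\dotsc,f_{n})$ by $(g\!\cdot\! f_{1},\dotsc,g\!\cdot\! f_{n})$, the resulting morphism \eqref{imageofthemap} is the precomposition of $[f_{1}:\dotsc:f_{n}]$ with the automorphism of $\PU$ induced by $g^{-1}$, hence has the same image. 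The harder direction is to show that the fibers of $\xi$ are exactly $\GL_{3}$-orbits: given two tuples with $V_{1}=V_{2}$ in $\PV$, one uses that the general image in $\mathcal{H}$ is a (projected) Veronese surface so that both maps $[f_{i}]$ and $[g_{i}]$ are birational from $\PU$ onto this common image. Composing one with the (birational) inverse of the other produces a birational self-map of $\PU$, hence an element of $\PGL_{3}$; combining this with the scalar ambiguity inherent in a projective map yields the desired $\GL_{3}$-element relating the tuples. Once this is established, together with the triviality of the stabilizer of a general tuple (which follows because a general $(f_{i})$ has no non-trivial projective self-symmetry on $\PU$), orbit-stabilizer gives $\dim\im(\xi)=n\binom{(n-1)/2+2}{2}-9$.

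For $\dim\mathcal{H}$, I would invoke Theorem \ref{2.geninj} to conclude that $\dim\im(\rho)=\dim\gra(3,\Lambda^{2}V)=3\binom{n}{2}-9$, and then add the codimension $\tfrac{1}{8}n(n-3)(n-5)$ provided by Proposition \ref{codimcasosur}. A straightforward expansion shows that the resulting total matches the value of $\dim\im(\xi)$ computed above. The main obstacle is the backward direction in the orbit analysis of $\xi$: one must be sure that for a generic choice of tuple, the map \eqref{imageofthemap} is birational onto its image (so that an equality of images lifts to an automorphism of $\PU$). I would handle this by arguing that for generic $(f_{i})$ the image sits as a linear projection of the Veronese $v_{(n-1)/2}(\PU)\subset\pp^{r}$ from a center of expected codimension, making the projection generically one-to-one onto its image, so that the birational inverse extends to an element of $\PGL_{3}$.
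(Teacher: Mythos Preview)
Your approach is essentially the paper's own: compute $\dim\im(\xi)$ via the $\GL_{3}$-action on the source, compute $\dim\mathcal{H}$ as $\dim\im(\rho)+\codim_{\mathcal{H}}\im(\rho)$ using Theorem~\ref{2.geninj} and Proposition~\ref{codimcasosur}, and verify both equal $\tfrac{1}{8}n(n+1)(n+3)-9$.

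One point to sharpen: you write that composing the two maps yields ``a birational self-map of $\PU$, hence an element of $\PGL_{3}$'', but birational self-maps of $\mathbb{P}^{2}$ form the full Cremona group, not $\PGL_{3}$. The paper avoids this by noting (via the $\sim$ in its diagram) that for a general tuple the map $[f_{1}:\dotsc:f_{n}]$ is an \emph{isomorphism} onto its image---a generic linear projection of $v_{(n-1)/2}(\mathbb{P}^{2})$ from a center disjoint from its secant variety is an embedding---so the composite is an automorphism of $\PU$ and therefore lies in $\PGL_{3}$. Your final paragraph gestures at this, but ``generically one-to-one'' should be strengthened to ``an embedding'' for the conclusion to follow.
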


\begin{proof}[Proof of Proposition \ref{genelediH}]
From Lemma \ref{ledimsonogug} we deduce that the closure of $\im(\xi)$ in $\mathcal{H}$ is an irreducible component of $\mathcal{H}$.
As $\mathcal{H}$ is generically smooth along $\im(\rho)$ (cfr.~Proposition \ref{codimcasosur}), $\im(\rho)$ is contained in only one irreducible component of the Hilbert scheme, namely $\overline{\im(\xi)}$. But $\mathcal{H}$ was defined as the union of the irreducible components containing $\im(\rho)$, so it turns out that $\mathcal{H}=\overline{\im(\xi)}$ and this concludes the proof.
\end{proof}

\begin{rmk}
\label{cason5dopo}
Let us remark that the statement of Theorem \ref{2.characterization} makes perfectly sense also when $n=5$. In this case,
a general element of $\im(\rho)$ is a projection in $\pp^{4}$ of a Veronese surface in $\pp^{5}$, and there is no distinction
between general projections and special projections as those arising in the statement. In other words, any general projection of the Veronese surface in $\pp^{4}$ is in $\im(\rho)$.
\smskip In the proof of Proposition \ref{genelediH} we saw that $\mathcal{H}=\overline{\im(\xi)}$, so we get that $\rho$ is dominant. This, together with the general injectivity, agrees with the birationality of $\rho$ proved in Theorem \ref{2.teorema}.
\end{rmk}

%\bibliographystyle{alpha}
%\bibliography{bibliografia.bib}

%\makeatletter
%  \providecommand\@dotsep{5}
%\makeatother
% \listoftodos\relax

\end{document}